\numberwithin{equation}{section}
\theoremstyle{plain}
\newtheorem{theorem}[equation]{Theorem}
\newtheorem{conjecture}[equation]{Conjecture}
\newtheorem{regularity_conjecture}[equation]{Regularity Conjecture}
\newtheorem{corollary}[equation]{Corollary}
\newtheorem{proposition}[equation]{Proposition}
\newtheorem{lemma}[equation]{Lemma}
\theoremstyle{definition}
\newtheorem{definition}[equation]{Definition}
\theoremstyle{remark}
\newtheorem{remark}[equation]{Remark}
\newcommand{\al}{\alpha}
\newcommand{\aut}{\operatorname{Aut}}
\newcommand{\be}{\beta}
\newcommand{\ben}{\begin{enumerate}}
\newcommand{\bit}{\begin{itemize}}
\newcommand{\C}{\mathbb{C}}
\newcommand{\com}{\operatorname{com}}
\newcommand{\D}{\partial}
\newcommand{\de}{\delta}
\newcommand{\diam}{\operatorname{diam}}
\newcommand{\een}{\end{enumerate}}
\newcommand{\eit}{\end{itemize}}
\newcommand{\eps}{\varepsilon}
\newcommand{\esssup}{\operatorname{esssup}}
\newcommand{\fg}{\mathfrak{g}}
\newcommand{\fh}{\mathfrak{h}}
\newcommand{\ga}{\gamma}
\newcommand{\h}{\mathcal{H}}
\renewcommand{\H}{\mathbb{H}}
\newcommand{\I}{\mathcal{I}}
\newcommand{\id}{\operatorname{id}}
\newcommand{\Int}{\operatorname{Int}}
\renewcommand{\Im}{\operatorname{Im}}
\newcommand{\la}{\lambda}
\newcommand{\La}{\Lambda}
\newcommand{\loc}{\operatorname{loc}}
\newcommand{\lra}{\longrightarrow}
\newcommand{\ol}{\overline}
\newcommand{\om}{\omega}
\newcommand{\Om}{\Omega}
\newcommand{\perm}{\operatorname{Perm}}
\newcommand{\R}{\mathbb{R}}
\newcommand{\ra}{\rightarrow}
\definecolor{gray}{gray}{0.7}
\newcommand{\restr}{\mbox{\Large \(|\)\normalsize}}
\newcommand{\si}{\sigma}
\newcommand{\Span}{\operatorname{span}}
\newcommand{\spt}{\operatorname{spt}}
\renewcommand{\th}{\theta}
\newcommand{\ul}{\underline}
\newcommand{\vol}{\operatorname{vol}}
\newcommand{\we}{\wedge}
\newcommand{\wt}{\operatorname{wt}}
\newcommand{\ze}{\zeta}
\def\Xint#1{\mathchoice
{\XXint\displaystyle\textstyle{#1}}%
{\XXint\textstyle\scriptstyle{#1}}%
{\XXint\scriptstyle\scriptscriptstyle{#1}}%
{\XXint\scriptscriptstyle\scriptscriptstyle{#1}}%
\!\int}
\def\XXint#1#2#3{{\setbox0=\hbox{$#1{#2#3}{\int}$ }
\vcenter{\hbox{$#2#3$ }}\kern-.6\wd0}}
\def\av{\Xint-}
\begin{document}

\title[Rigidity of mappings between Carnot groups]{Pansu pullback and rigidity of mappings between Carnot groups}

\author{Bruce Kleiner}
\thanks{BK was supported by NSF grants DMS-1405899, DMS-1711556,  DMS-2005553 and a Simons Collaboration grant.}
\email{bkleiner@cims.nyu.edu}
\address{Courant Institute of Mathematical Science, New York University, 251 Mercer Street, New York, NY 10012}
\author{Stefan M\"uller}
\thanks{SM has been supported by the Deutsche Forschungsgemeinschaft (DFG, German Research Foundation) through
the Hausdorff Center for Mathematics (GZ EXC 59 and 2047/1, Projekt-ID 390685813) and the 
collaborative research centre  {\em The mathematics of emerging effects} (CRC 1060, Projekt-ID 211504053).  This work was initiated during a sabbatical of SM at the Courant Institute and SM would like to thank  R.V. Kohn and the Courant Institute
members and staff for 
their  hospitality and a very inspiring atmosphere.}
\email{stefan.mueller@hcm.uni-bonn.de}
\address{Hausdorff Center for Mathematics, Universit\"at Bonn, Endenicher Allee 60, 53115 Bonn}
\author{Xiangdong Xie}
\thanks{XX has been supported by Simons Foundation grant \#315130.}
\email{xiex@bgsu.edu}
\address{Dept. of Mathematics and Statistics, Bowling Green State University, Bowling Green, OH 43403}

\maketitle

\begin{abstract}
 This is the first in a series of papers \cite{KMX2,kmx_approximation_low_p,kmx_rumin,kmx_iwasawa} on geometric mapping theory in Carnot groups -- and more generally equiregular manifolds -- in which we  prove a number of new structural results for Sobolev (in particular quasisymmetric) mappings, establishing (partial) rigidity or (partial) regularity theorems, depending on the context.

\end{abstract}

\tableofcontents

\section{Introduction}

\subsection*{Background and overview}

\mbox{}
Quasiconformal mappings between domains in $\R^n$ have been heavily studied since the 1930s.  They initially arose in Teichmuller theory, and over time found applications in a number of areas of mathematics, including Kleinian groups, complex dynamics, PDE, rigidity of lattices, and geometric group theory. Quasiconformal mappings in non-Euclidean settings first appeared around 1970 in the work of Mostow on rigidity of lattices in Lie groups, and the topic gained momentum in the 80s and 90s due to the convergence of developments from several directions, including Gromov's notion of hyperbolicity, fundamental work of Pansu on quasi-isometries of symmetric spaces, advances in the analytical theory of quasiconformal homeomorphisms, and the explosion of interest in analysis on metric spaces (see for instance \cite{gromov_hyperbolic_groups,pansu,iwaniec_martin_quasiregular_even_dimensions,margulis_mostow_differential_quasiconformal_mapping,heinonen_koskela,astala_icm,bonk_icm,kleiner_icm,heinonen_nonsmooth}).  A new phenomenon in the non-Euclidean setting is rigidity \cite{pansu}: in some situations quasiconformal mappings turn out to be very restricted, in sharp contrast to mappings in $\R^n$, which are flexible and come in infinite dimensional families.    A heuristic explanation for rigidity is that in the case of smooth mappings quasiconformality is equivalent to a first order system which is formally overdetermined except in very special cases.  However, the limited regularity of quasiconformal homeomorphisms has posed a major obstacle to converting this heuristic into  theorems:  since quasiconformal homeomorphisms are not a priori smooth, the standard technique for exploiting overdetermined conditions -- repeated differentiation -- is not applicable.  For this reason, rigidity is only known in some special cases where the regularity problem has been successfully overcome or circumvented    \cite{pansu,capogna,capogna_cowling,xie_filiform}.  
Regularity issues aside, the general picture remains unclear even for smooth quasiconformal homeomorphisms,  because the precise implications of the condition depend in a subtle way on the structure of the Carnot group $G$; see below for further discussion.

In a series of papers \cite{KMX2,kmx_approximation_low_p,kmx_rumin,kmsx_infinitesimally_split_globally_split,kmsx_counterexample,kmx_iwasawa}  we establish a number of rigidity and regularity theorems for quasiconformal homeomorphisms, and more generally, Sobolev mappings.  A key technical result is a generalization to Carnot groups of the fact that in Euclidean space, pullback of differential forms by Sobolev mappings commutes with exterior differentiation, under suitable assumptions on the Sobolev exponent \cite{reshetnyak_space_mappings_bounded_distortion}.  We consider Sobolev (in particular quasiconformal) mappings  $f:G\supset U\ra U'\subset G'$ between open subsets of Carnot groups, and define the Pansu pullback $f_P^*\om$ of a smooth differential form $\om\in \Om^*(U')$ using the (approximate) Pansu differential.  Although Pansu pullback does not commute with exterior differentiation in general,   even for smooth Sobolev mappings (see Lemma~\ref{lem_pullback_exterior_derivative}), it turns out that a partial analog does hold: for some closed forms $\om$, certain components of the distributional exterior derivative of $f_P^*\om$ are zero.   This result provides new constraints on mappings which can be exploited in different ways depending on the context -- to exclude oscillatory behavior of the Pansu differential, to prove regularity, or to show that auxiliary geometric objects satisfy a PDE.  We note that Dairbekov and Vodopyanov, motivated by Reshetnyak's work and applications to quasiregular mappings, considered Pansu pullback   and showed that it  commutes with exterior derivative in special cases  \cite{dairbekov_morphism_property_bounded_distortion,vodopyanov_bounded_distortion,vodopyanov_foundations}.
 
Although the initial motivation for this work arose from geometric mapping theory and geometric group theory, in spirit the phenomena and methods have much in common with the analytical side of geometric mapping theory, and with the literature on rigidity and oscillatory solutions to PDEs, see for instance 
\cite{reshetnyak_space_mappings_bounded_distortion,iwaniec_martin_quasiregular_even_dimensions,vodopyanov_foundations,nash54, tartar79, murat81, gromov_pdr, scheffer93, dacorogna_marcellini99, muller99, muller_sverak03, delellis_szekelyhidi09, delellis_szekelyhidi16, isett18, buckmaster_vicol19}.

\bigskip
\subsection*{Statement of results}~
Here we will cover the results from this paper and \cite{KMX2}; 
we refer the reader to \cite{kmx_approximation_low_p,kmx_rumin,kmsx_infinitesimally_split_globally_split} for further results.  To avoid technicalities, in this paper we restrict our attention to Sobolev exponents which imply continuity; this includes in particular quasiconformal mappings.  See \cite{kmx_approximation_low_p,kmsx_infinitesimally_split_globally_split} for rigidity, regularity, and flexibility results concerning Sobolev mappings with lower integrability exponents.   Many of the results for mappings between Carnot groups discussed in the introduction hold in the more general setting of equiregular manifolds; see Appendix~\ref{sec_equiregular_manifolds}.

We begin by setting notation and briefly recalling a few facts about Carnot groups; see Section~\ref{sec_prelim} for more detail.  

Let $G$ be a Carnot group with Lie algebra $\fg$, grading $\fg=\oplus_{j\geq 1}^sV_j$, and  dilation group $\{\de_r:G\ra G\}_{r\in (0,\infty)}$.  Without explicit mention, in what follows all Carnot groups will be equipped with Haar measure and a Carnot-Caratheodory metric denoted generically by $d_{CC}$.  We recall that if $f:G\supset U\ra G'$ is a Sobolev mapping between Carnot groups, where $U$ is open, then    $f$ has a well-defined  Pansu differential $D_Pf(x):G\ra G'$ for a.e. $x\in U$ provided $f\in W^{1,p}_{\loc}$ for some $p$ strictly larger than the Hausdorff (homogeneous) dimension of $G$.  The Pansu differential $D_Pf(x):G\ra G'$ is a graded group homomorphism, which we often conflate with the associated homomorphism of graded Lie algebras $D_Pf(x):\fg\ra \fg'$.  If $f:G\supset U\ra U'\subset G$ is a quasiconformal homeomorphism, then $f\in W^{1,p}_{\loc}$ for some $p$ strictly larger than the Hausdorff (homogeneous) dimension of $G$, and $D_Pf(x)$ is an isomorphism for a.e. $x\in U$.
See Theorem \ref{thm_properties_qc_homeomorphisms}.

We first discuss mappings and product structure.  We remark that the problem of determining to what extent a mapping $X_1\times X_2\ra Y_1\times Y_2$ must respect the product structure has arisen in various places in mathematics  \cite{scottish_book,whitehead_homotopy_type,fox_problem_ulam,fournier_ulam,jonsson_direct_decompositions}; our original motivation was the product rigidity theorem in geometric group theory \cite{kkl_qis_and_de_rham}.

We observe that bilipschitz homeomorphisms need not respect product structure in Carnot groups: if $G$ is a Carnot group then for any nonconstant Lipschitz map $\al:G\ra \R^n$ the  shear $(x,y)\mapsto (x,y+\al(x))$ defines a bilipschitz homeomorphism $G\times\R^n\ra G\times \R^n$ which does not respect the product structure.  Another type of example comes from products of the form $G_1\times G_2$, where $G_1\simeq K$, $G_2\simeq K\times K$ for some Carnot group $K$: the map $(x_1,(x_2,x_3))\mapsto (x_3,(x_1,x_2))$ does not respect the given product structure.  Our first result asserts that nondegenerate Sobolev mappings between products must respect the product structure once these two types of examples are excluded.  Let $\{G_i\}_{1\leq i\leq n}$, $\{G_j'\}_{1\leq j\leq n'}$ be  collections of Carnot groups, where each $G_i$, $G_j'$ is nonabelian and does not admit a  nontrivial decomposition as a product of Carnot groups.  
Let $G:=\prod_{i}G_i$,  $G':=\prod_{j}G'_j$.  

\begin{theorem}
\label{thm_main_product_intro}
Let $\nu$ denote the homogeneous dimension of $G$.  Suppose for some $p>\nu$ that   $f:G\supset U\ra G'$ is   a   $W^{1,p}_{\loc}$-mapping, $U=\prod_iU_i$ is a product of connected open sets $U_i\subset G_i$, and the Pansu differential $D_Pf(x)$ is an isomorphism for a.e. $x\in U$.  Then  $f$ is a product of mappings, i.e. $n=n'$,  and for  some permutation $\si:\{1,\ldots,n\}\ra \{1,\ldots,n\}$ there are mappings  $\{f_{\si(i)}: U_i \ra G_{\si(i)}'\}_{1\leq i\leq n}$ such that 
\begin{equation}
\label{eqn_f_product}
f(x_1,\ldots,x_n)=(f_1(x_{\si^{-1}(1)}),\ldots,f_n(x_{\si^{-1}(n)}))
\end{equation}
for every $(x_1,\ldots,x_n)\in U  $.   In particular, any quasiconformal homeomorphism $G\ra G'$ is a product of homeomorphisms.
\end{theorem}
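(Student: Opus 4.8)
The plan is to reduce the theorem to a Lie-algebra-level rigidity property of the Pansu differential and then to globalize it with the Pansu pullback theorem; the quasiconformal statement then follows at once. Throughout, after discarding a null set, $D_Pf(x)\colon\fg\ra\fg'$ is a graded Lie algebra isomorphism for every $x\in U$. \emph{Step 1 (algebraic rigidity).} I would first invoke a Krull--Schmidt-type lemma for Carnot groups: since each $\fg_i,\fg'_j$ is nonabelian and indecomposable as a product of Carnot groups, every graded Lie algebra isomorphism $\phi\colon\oplus_i\fg_i\ra\oplus_j\fg'_j$ forces $n=n'$ and carries $\fg_i$ onto $\fg'_{\si(i)}$ for some permutation $\si$ (the nonabelian hypothesis rules out ``diagonal'' ideals, hence the shear example $G\times\R^n$, and indecomposability rules out the $K$ versus $K\times K$ example). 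Applied to $D_Pf(x)$ this produces a measurable field of permutations $x\mapsto\si_x$ with $D_Pf(x)(\fg_i)=\fg'_{\si_x(i)}$; in particular $D_Pf(x)$ restricts to an isomorphism $V_1(\fg_i)\ra V_1(\fg'_{\si_x(i)})$, and since every factor is nonabelian, $m_j:=\dim V_1(\fg'_j)\geq 2$ for all $j$, with $\dim V_1(\fg_i)=m_{\si_x(i)}$.

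\emph{Step 2 (the permutation is constant --- the crux).} Fix $j$, choose a basis of $V_1(\fg'_j)^*$, regard its elements as left-invariant horizontal $1$-forms on $G'$ (these are closed, being annihilators of $[\fg',\fg']$), and let $\Theta'_j\in\Om^{m_j}(G')$ be their wedge product, a closed $m_j$-form. By the Pansu pullback theorem, applied to $\Theta'_j$, the components of $d(f_P^*\Theta'_j)$ it controls vanish; I expect these to include the purely horizontal components, which is all that the argument uses. On the other hand, by Step 1, for a.e.\ $x$ the form $f_P^*\Theta'_j(x)$ is a nonzero element of $\Lambda^{m_j}V_1(\fg_{\si_x^{-1}(j)})^*$, hence equals $c_j(x)\,\vol_{\si_x^{-1}(j)}$ with $c_j(x)\neq 0$, where $\vol_i$ denotes the horizontal volume form of $G_i$. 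With $S_i:=\{x:\si_x^{-1}(j)=i\}$ this reads $f_P^*\Theta'_j=\sum_i\mathbf{1}_{S_i}\,c_j\,\vol_i$. The key algebraic observation is that, precisely because every $m_i\geq 2$, the horizontal monomials $e\we\vol_i$ with $e$ a basis covector of $\oplus_{k\neq i}V_1(\fg_k)^*$ have pairwise disjoint supports as $i$ varies; since each $\vol_i$ is closed, $d(\mathbf{1}_{S_i}c_j\,\vol_i)=d(\mathbf{1}_{S_i}c_j)\we\vol_i$, so vanishing of the horizontal part of $d(f_P^*\Theta'_j)$ forces, separately for each $i$, that $X(\mathbf{1}_{S_i}c_j)=0$ distributionally for every horizontal $X\in\oplus_{k\neq i}V_1(\fg_k)$. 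Slicing over the leaves $\{x_i=\mathrm{const}\}\cong\prod_{k\neq i}U_k$, which are connected, and using that a Sobolev function on a connected Carnot domain with vanishing horizontal derivative is a.e.\ constant, $\mathbf{1}_{S_i}c_j$ is a.e.\ a function of $x_i$ alone; as $c_j\neq 0$ on $S_i$ this forces $S_i=A_i\times\prod_{k\neq i}U_k$ modulo null sets. Since the $S_i$ partition $U$ and each $U_k$ has positive measure, at most one $A_i$ can be non-null, so $\si_x^{-1}(j)$ equals a.e.\ a constant $\tau(j)$. Letting $j$ vary and using that the $\si_x$ are bijections with $n=n'$, $\tau$ is a bijection; after relabelling the $G'_j$ we may assume $\si_x\equiv\mathrm{id}$ and $n=n'$.

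\emph{Step 3 (conclusion).} Now a.e.\ $D_Pf(x)(\fg_k)=\fg'_k$, so for $i\neq k$ the Pansu differential of $f_i:=\pi'_i\circ f$, namely $\pi'_i\circ D_Pf(x)$, vanishes on $\fg_k$ and in particular on $V_1(\fg_k)$. Hence $f_i\colon U\ra G'_i$ is a $W^{1,p}_{\loc}$-map whose horizontal derivative along $\oplus_{k\neq i}V_1(\fg_k)$ is a.e.\ zero, and the slicing argument above shows $f_i$ agrees a.e.\ with a map $U_i\ra G'_i$ of $x_i$ alone, giving \eqref{eqn_f_product}. Finally, a quasiconformal homeomorphism $f\colon G\ra G'$ lies in $W^{1,p}_{\loc}$ for some $p>\nu$ with $D_Pf(x)$ an isomorphism a.e., so it satisfies the hypotheses with $U=G$, $U_i=G_i$; in the resulting splitting $f=\prod_i f_{\si(i)}$ each factor $f_{\si(i)}\colon G_i\ra G'_{\si(i)}$ is then a homeomorphism, since a product map is a homeomorphism iff all of its factors are.

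The main obstacle is Step 2. One delicate point is matching hypotheses: one must verify that $\Theta'_j$ is among the closed forms to which the Pansu pullback theorem applies and that the components of $d(f_P^*\Theta'_j)$ it controls include the horizontal monomials $e\we\vol_i$ used above. The more substantial point is the upgrade from the pointwise identity $f_P^*\Theta'_j(x)=c_j(x)\,\vol_{\si_x^{-1}(j)}$ --- in which the coordinate subspace $V_1(\fg_{\si_x^{-1}(j)})^*$ a priori jumps only measurably --- to local, hence global, constancy; this is exactly where the dimension bound $m_i\geq 2$ (i.e.\ nonabelianness of the factors) and the leafwise constancy of Sobolev maps enter. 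By comparison, the algebraic lemma of Step 1 and the soft analysis of Step 3 should be routine.
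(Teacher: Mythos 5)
Your Steps~1 and 3 are fine and coincide with the paper's argument: Step~1 is exactly the Krull--Schmidt-type Lemma~7.6 (\cite[Prop.\ 2.5]{Xie_Pacific2013}), and Step~3 is the same Fubini/slicing argument the paper uses.

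The gap is in Step~2, and you were right to flag it. The form you pull back, $\Theta'_j =$ (wedge of a basis of $V_1(\fg'_j)^*$), is the \emph{horizontal} volume form of the factor. This is indeed closed, but the Pullback Theorem gives essentially no information when applied to it, for two compounding reasons. First, $\Theta'_j$ is \emph{exact} whenever $G'_j$ is a Heisenberg group: if $\th_{2n+1}$ spans $V_2(\fh_n)^*$, then $\Theta'_j=\tfrac1{n!}(d\th_{2n+1})^n=\tfrac1{n!}\,d\bigl(\th_{2n+1}\wedge(d\th_{2n+1})^{n-1}\bigr)$; and by Lemma~\ref{le:trivial_pull_back} the pullback of an exact left-invariant form wedged with any admissible closed left-invariant test form vanishes identically, so no derivative information is obtained. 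Second, even setting exactness aside, the weight budget is at the absolute minimum. With $\wt(\Theta'_j)=-m_j$ and $\deg\Theta'_j=m_j$, the Pullback Theorem requires a test form $\eta$ with $\wt(d\eta)\le -\nu+m_j$, but by Lemma~\ref{lem_weight_facts}\eqref{it:weight_facts_vanishing} this is the minimum possible weight for a form of degree $N-m_j$. In the model case $G=\H\times\H$, $\Theta'_1=\th_1\wedge\th_2$, $\nu=8$, and one needs $d\eta$ to be a $4$-form of weight $\le -6$, i.e.\ pointwise a combination of the six forms $\th_{ij}\wedge\th_3\wedge\th_6$ with $\{i,j\}\subset\{1,2,4,5\}$; a short computation (just as in the proof of Lemma~\ref{le:trivial_pull_back}) shows that there is \emph{no} nonzero closed $3$-form $\zeta$ with $\wt(\zeta)\le -5$, even with non-constant coefficients, so the corollary $\zeta\wedge df_P^*\Theta'_1=0$ is vacuous; and attempting to build compactly supported $\eta$ with $\wt(d\eta)\le -6$ and $(d\eta)_{3456}\ne 0$ leads to equations like $X_4\chi=0$ for compactly supported $\chi$, forcing $\eta\equiv 0$. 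So the ``purely horizontal'' components of $d(f_P^*\Theta'_j)$ that your argument needs are not controlled.

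The repair is exactly the paper's choice of form: replace the horizontal volume form by the \emph{full} factor volume form $\th_{i_0}:=\pi_{i_0}^*\vol_{G'_{i_0}}\in\Om^{N_{i_0}}(G')$ of weight $-\nu_{i_0}$. This form is closed and not exact (its class generates the top Lie-algebra cohomology of the nonabelian factor), and the weight/degree ratio leaves slack: one can take $\eta=\psi\,\bigl(\bigwedge_{i\ne j,j'}\th_i\bigr)\wedge i_X\th_{j'}$ with $X\in V_1\cap\fg_{j'}$, which is closed by Lemma~\ref{lem_volume_form_contraction_exterior_derivative} and has the precise weight $\wt(\th_{i_0})+\wt(d\eta)\le -\nu$, and the Pullback Theorem then yields exactly the distributional constancy relations $Xa_j=0$ for $X$ in the ``wrong'' factors. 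The rest of your Step~2 (the disjoint-support observation using $m_i\ge 2$, and the slicing argument on $S_i$) goes through with this substitution and reproduces the paper's conclusion.
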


\begin{remark}
It is a very intriguing problem to understand the dependence of the rigidity phenomenon in Theorem~\ref{thm_main_product_intro} on the Sobolev exponent.  In \cite{kmx_approximation_low_p} we show that rigidity persists if $p\geq \max \nu_i - 1$, where $\nu_i$ is the homogeneous dimension of $G_i$; in fact, in some cases even $p \ge 2$ is sufficient.
 In \cite{kmsx_infinitesimally_split_globally_split} we study a related rigidity problem in Euclidean space, determining the threshold exponent at which rigidity transitions to flexibility.
\end{remark}

Theorem~\ref{thm_main_product_intro} has an application to negatively curved homogeneous spaces.  If  $G$, $G'$ are as in Theorem~\ref{thm_main_product_intro} for $n\geq 2$, and $X$, $X'$ are the negatively curved homogeneous spaces arising from  the construction of Heintze \cite{heintze}, then every quasi-isometry $X\ra X'$ is a rough isometry up to rescaling the metric on $G'$; see \cite{Xie_Pacific2013} for details.  

By means of a blow-down argument, we use the product rigidity theorem above to  see that quasi-isometries between products must asymptotically preserve product structure.  

Let $\{G_i\}_{1\leq i\leq n}$, $\{G_j'\}_{1\leq j\leq n'}$ be as above, and let   $\{\hat G_i\}_{1\leq i\leq n}$, $\{\hat G_j'\}_{1\leq j\leq n'}$   be  collections where:
\bit
\item Each $\hat G_i$, $\hat G'_j$ is either a simply-connected nilpotent Lie group with a left invariant Riemannian metric, or a finitely generated nilpotent group equipped with a word metric.
\item  For every $1\leq i\leq n$, $1\leq j\leq n'$ the asymptotic cone of  $\hat G_i$, $\hat G_j'$ is bilipschitz homeomorphic to $G_i$, $G_j'$, respectively \cite{pansu_croisssance}. 
\eit 
Set $\hat G:=\prod_i\hat G_i$, $\hat G'=\prod_jG_j'$, and equip $\hat G$, $\hat G'$ with the $\ell^2$-distance functions, 
$$
d^2_{\hat G}(x,x'):={\sum_id_{G_i}^2(\pi_i(x),\pi_i(x'))}\,,\quad
d^2_{\hat G'}(x,x'):={\sum_jd_{G_j'}^2(\pi_j(x),\pi_j(x'))}\,.
$$
If $1\leq i\leq n$, we say that two points $x=(x_1,\ldots,x_n),\,x'=(x_1',\ldots,x_n')\in G$ form an {\bf $i$-pair} if $x_j=x_j'$ for every $j\neq i$.  The next result says that an $i$-pair $x,x'\in \hat G$ with $d(x,x')$ large is nearly mapped to a $j$-pair, where $j$ depends only on $i$.  
\begin{theorem}
\label{thm_qis_between_products_intro}
For every $L\geq 1$, $A<\infty$, there is a function $\eps=\eps_{L,A}:[0,\infty)\ra (0,1]$ with $\lim_{R\ra\infty}\eps(R)=0$ with the following property.

For every $(L,A)$-quasi-isometry $\Phi:\hat G\ra \hat G'$, there is a bijection $\si:\{1,\ldots,n\}\ra \{1,\ldots,n'\}$, such that for every $R\in [0,\infty)$, $1\leq i\leq n$, and every $i$-pair  $x,x'\in \hat G$ with $d(x,x')\geq R$, we have 
\begin{equation}
\label{eqn_asymptotically_product}
\frac{d(\pi_j(\Phi(x)),\pi_j(\Phi(x')))}{d(\Phi(x),\Phi(x'))}\in
\begin{cases}
(1-\eps,1]\quad\text{if}\quad j=\si(i)\\
[0,\eps)\quad\text{if}\quad j\neq \si(i)\,.
\end{cases}
\end{equation}
\end{theorem}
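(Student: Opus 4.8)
The plan is to deduce Theorem~\ref{thm_qis_between_products_intro} from the product rigidity theorem (Theorem~\ref{thm_main_product_intro}) by a blow-down/compactness argument. Suppose the conclusion fails. Then there exist $L\geq 1$, $A<\infty$, a constant $\eps_0>0$, and a sequence of $(L,A)$-quasi-isometries $\Phi_k:\hat G\to\hat G'$ together with indices $i_k$, scales $R_k\to\infty$, and $i_k$-pairs $x_k,x_k'\in\hat G$ with $d(x_k,x_k')\geq R_k$ for which, after passing to a subsequence so that $i_k\equiv i$ is constant, \eqref{eqn_asymptotically_product} is violated for \emph{every} choice of bijection $\si$ — equivalently, for every bijection the relevant ratio lies outside the prescribed intervals by at least $\eps_0$. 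The idea is to rescale: precompose and postcompose $\Phi_k$ with dilations that bring the pair $x_k,x_k'$ to a fixed pair at unit distance, and pass to a limit. Since the asymptotic cones of the factors $\hat G_i$, $\hat G_j'$ are bilipschitz to $G_i$, $G_j'$, and the $\ell^2$-product of asymptotic cones is the asymptotic cone of the $\ell^2$-product, the rescaled maps converge (along an ultrafilter, or along a subsequence after an Arzel\`a--Ascoli argument using equicontinuity of quasi-isometries at a fixed scale) to an $(L,A')$-quasi-isometry, in fact an $L$-bilipschitz homeomorphism $\Psi:G\to G'$ of the products of Carnot groups, because the additive constant is killed in the blow-down.

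The second step is to apply Theorem~\ref{thm_main_product_intro} to $\Psi$. An $L$-bilipschitz homeomorphism between Carnot groups is quasiconformal, hence lies in $W^{1,p}_{\loc}$ for some $p>\nu$ with $D_P\Psi$ an isomorphism a.e.; thus $\Psi$ is a product of homeomorphisms, associated to some permutation $\si$ with $n=n'$. In particular $\Psi$ sends $i$-pairs exactly to $\si(i)$-pairs: if $x,x'$ differ only in the $i$-th coordinate then $\Psi(x),\Psi(x')$ differ only in the $\si(i)$-th coordinate, so $d(\pi_j(\Psi(x)),\pi_j(\Psi(x')))=0$ for $j\neq\si(i)$ and $=d(\Psi(x),\Psi(x'))$ for $j=\si(i)$. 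Now I would chase this back through the rescaling: the pair $(x_k,x_k')$ was an $i$-pair, its images under $\Phi_k$ rescale to the pair $(\Psi(\bar x),\Psi(\bar x'))$ which is a $\si(i)$-pair at unit distance, and the ratios in \eqref{eqn_asymptotically_product} are continuous in the blow-down limit (the $\ell^2$-distance and the coordinate projections pass to the limit). Hence for $k$ large the ratio for $j=\si(i)$ is within $\eps_0$ of $1$ and for $j\neq\si(i)$ within $\eps_0$ of $0$, contradicting the assumed violation — provided we also arrange that the permutation $\si$ is the \emph{same} for all of $\{1,\dots,n\}$, which follows because $\Psi$ is a single product map so one permutation works for every $i$ simultaneously. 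To get the uniform $\si$ in the statement (not just per-pair), one runs the same compactness argument with $i$ ranging over all of $\{1,\dots,n\}$ at once, using that there are only finitely many bijections so a subsequence realizes a fixed bad bijection for all $i$.

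The main obstacle is the passage to the blow-down limit and the verification that the limiting object is genuinely a bilipschitz homeomorphism $G\to G'$ to which Theorem~\ref{thm_main_product_intro} applies, rather than, say, a bilipschitz embedding with non-surjective or degenerate image. Surjectivity and the bilipschitz (not merely quasi-isometric, with vanishing additive constant) property follow from the standard theory of quasi-isometries and asymptotic cones — a coarsely surjective quasi-isometry induces a bijective bilipschitz map on asymptotic cones — but one must be careful to base the cones at the correct points (namely at $x_k$, suitably translated) and to use the homogeneity of Carnot groups to identify the pointed limit of $\hat G_i$ with $G_i$ independently of the basepoint. A secondary technical point is justifying that the normalized ratios on the left-hand side of \eqref{eqn_asymptotically_product}, which involve $d(\Phi_k(x_k),\Phi_k(x_k'))$ in the denominator, are bounded away from $0$ and $\infty$ so that the limit is well-defined; this is immediate from the quasi-isometry inequality once $d(x_k,x_k')\to\infty$. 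Everything else is a routine, if slightly tedious, bookkeeping of the rescaling maps and their convergence.
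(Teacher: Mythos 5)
Your blow-down argument correctly establishes a \emph{per-pair} statement, which is essentially Lemma~\ref{lem_i_pair_far_apart} of the paper: for each fixed sequence of $i$-pairs with distance tending to infinity, the ratios converge along a subsequence to $0$ or $1$, with exactly one ratio near $1$. But the theorem asserts more: a \emph{single} bijection $\si$ such that for \emph{every} $i$-pair at large distance, the factor whose ratio is near $1$ is $\si(i)$. Your proof does not establish this global consistency. The limit map $\Psi$ and its permutation $\si$ are produced by re-centering the blow-down at the chosen sequence of pairs; a different sequence of $i$-pairs, even with the same $i$, based at spatially distant locations, could a priori yield a different limit map and a different permutation. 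The ``finitely many bijections'' remark handles consistency across different values of $i$ \emph{for a fixed choice of blow-down pairs}, but says nothing about consistency across different pairs with the same $i$. Note also that the negation of the theorem is not what you wrote: the quantifier order is ``$\forall\si\ \exists(x,x')$ bad,'' not ``$\exists(x,x')$ bad $\forall\si$,'' so you cannot extract a single bad sequence of pairs to blow down.

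The missing ingredient is the connectivity-and-local-constancy argument. One defines, for each $i$-pair $(x,x')$ at distance $\geq R$, the index $j=j(x,x')$ with ratio near $1$; this assignment is locally constant (moving each endpoint a bounded distance changes the ratio only by $O(1/R)$, Lemma~\ref{lem_phi_ij_locally_constant}); and the space of $i$-pairs at distance $\geq R$ is connected within the space of $i$-pairs at distance $\geq\la R$ (Lemma~\ref{lem_pairs_connected}, a separate blow-down argument exploiting that the relevant asymptotic cones are Carnot groups homeomorphic to $\R^m$ with $m\geq 2$). Combining the two yields global constancy of $j$ and hence a well-defined $\si$; the bijectivity of $\si$ is then read off from a quasi-inverse. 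Without some version of this step your argument proves only the per-pair estimate, not the theorem, so the claim that ``everything else is routine bookkeeping'' understates where the real content lies.
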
 
\noindent
One may compare Theorem~\ref{thm_qis_between_products_intro} with the rigidity theorem for quasi-isometries 
between products of Gromov hyperbolic spaces \cite{kkl_qis_and_de_rham}; in that case the quasi-isometry is at finite sup distance from a product of quasi-isometries, modulo reindexing factors.  This stronger assertion is false in the setting of Theorem~\ref{thm_qis_between_products_intro}, because of central shears, see \cite{xie_some_examples_qis_nilpotent}.  Nonetheless, Theorem~\ref{thm_qis_between_products_intro}   
    is still strong enough to imply that the factors are quasi-isometric: 
\begin{corollary}
\label{cor_factors_are_qi_intro}
If $\hat G$, $\hat G'$, and $\Phi$ are as in Theorem~\ref{thm_qis_between_products_intro}, then modulo reindexing, $\hat G_i$ is quasi-isometric to $\hat G_i'$.
\end{corollary}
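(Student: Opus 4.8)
The plan is to deduce the corollary from Theorem~\ref{thm_qis_between_products_intro} together with a \emph{co-Hopf property} of nilpotent groups, by analyzing the restrictions of $\Phi$, and of a coarse inverse of $\Phi$, to coordinate fibers. Fix basepoints $o=(o_1,\dots,o_n)\in\hat G$, $o'=(o_1',\dots,o_{n'}')\in\hat G'$, and for $1\le i\le n$ let $\iota_i\colon\hat G_i\ra\hat G$ be the isometric embedding onto the fiber through $o$ in the $i$-th direction, $\iota_i(z)=(o_1,\dots,o_{i-1},z,o_{i+1},\dots,o_n)$. Set $F_i:=\pi_{\si(i)}\circ\Phi\circ\iota_i\colon\hat G_i\ra\hat G'_{\si(i)}$. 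Since $\iota_i$ is isometric and $\pi_{\si(i)}$ is $1$-Lipschitz, $F_i$ is coarsely Lipschitz with the constants of $\Phi$. For the matching lower bound, observe that $\iota_i(z),\iota_i(z')$ form an $i$-pair with $d(\iota_i(z),\iota_i(z'))=d_{\hat G_i}(z,z')$, so the case $j=\si(i)$ of \eqref{eqn_asymptotically_product} gives, with $R_0$ chosen so that $\eps(R_0)\le\frac12$,
\[
d(F_i(z),F_i(z'))\ \ge\ \tfrac12\,d\bigl(\Phi\iota_i(z),\Phi\iota_i(z')\bigr)\ \ge\ \frac1{2L}\,d_{\hat G_i}(z,z')-\frac A2\qquad\text{whenever } d_{\hat G_i}(z,z')\ge R_0,
\]
and separations $<R_0$ are absorbed into an additive constant. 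Thus each $F_i$ is a quasi-isometric embedding, with constants depending only on $L,A$ and $\hat G,\hat G'$. Applying the same construction to a coarse inverse $\Psi$ of $\Phi$ (again a quasi-isometry, with constants controlled by $L,A$), Theorem~\ref{thm_qis_between_products_intro} supplies a bijection $\tau\colon\{1,\dots,n'\}\ra\{1,\dots,n\}$ and quasi-isometric embeddings $F'_j\colon\hat G'_j\ra\hat G_{\tau(j)}$.

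Next I would compose these maps around cycles. Let $\rho:=\tau\circ\si$, a permutation of $\{1,\dots,n\}$, and let $m$ be its order. For a fixed $i$, composing $F_i,\,F'_{\si(i)},\,F_{\rho(i)},\,F'_{\si\rho(i)},\dots$ around the $\rho$-orbit of $i$ yields, after $2m$ maps, a quasi-isometric embedding $\Xi_i\colon\hat G_i\ra\hat G_i$ whose outermost factor is $F'_{\si\rho^{m-1}(i)}\colon\hat G'_{\si\rho^{m-1}(i)}\ra\hat G_i$. The key input is the following \textbf{co-Hopf lemma}: any quasi-isometric embedding of a finitely generated nilpotent group, or of a simply connected nilpotent Lie group with left-invariant Riemannian metric, into itself is a quasi-isometry. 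Granting it, $\Xi_i$ is a quasi-isometry, so its image --- hence the a priori larger image of $F'_{\si\rho^{m-1}(i)}$ --- is coarsely dense, so $F'_{\si\rho^{m-1}(i)}$ is a quasi-isometry; therefore $\hat G_i$ is quasi-isometric to $\hat G'_{\si\rho^{m-1}(i)}$. Since $\beta:=\si\circ\rho^{m-1}$ is a bijection $\{1,\dots,n\}\ra\{1,\dots,n'\}$ (and $n=n'$ because $\si$ is a bijection), this is exactly the assertion, modulo reindexing by $\beta$. (Using the cases $j\neq\si(i)$ of \eqref{eqn_asymptotically_product} together with $\Psi\circ\Phi\approx\id$ one checks, moreover, that $\tau\circ\si=\id$, so $\rho=\id$, $\beta=\si$, and the pairing is the bijection of Theorem~\ref{thm_qis_between_products_intro}.)

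It remains to prove the co-Hopf lemma, and this is where the main difficulty lies. Let $\phi\colon N\ra N$ be a quasi-isometric embedding of such a group. By Pansu's theorem \cite{pansu_croisssance} the asymptotic cone of $N$ is a Carnot group, hence a topological manifold homeomorphic to $\R^{m}$, and for any ultrafilter and basepoint sequence $\phi$ blows down to a bi-Lipschitz self-embedding of this cone. A bi-Lipschitz embedding $\R^{m}\ra\R^{m}$ is proper, hence a closed map, and is open by invariance of domain, so it is surjective. The subtle step --- and the main obstacle --- is transferring this to coarse surjectivity of $\phi$ itself: blowing down about a fixed basepoint only controls the image of $\phi$ in a neighborhood of bounded radius, and the ``density defect'' of $\phi$ might a priori grow with scale. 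One circumvents this by contradiction: if there were $p_k\in N$ with $r_k:=d(p_k,\phi(N))\ra\infty$, one rescales by $1/r_k$ \emph{about a point $z_k$ such that $\phi(z_k)$ is a (near-)closest image point to $p_k$}; then $p_k$ lies at distance $\approx1$ from $\phi(z_k)$ in the rescaled space, so surjectivity of the blow-down produces image points within $o(r_k)$ of $p_k$, contradicting $d(p_k,\phi(N))=r_k$. This establishes the lemma, and with it the corollary.
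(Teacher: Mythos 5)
Your proof is correct, but it takes a genuinely different route from the paper's. Both arguments start the same way: restrict $\Phi$ to a fiber in the $i$-th direction and project to the $\si(i)$-th factor, then use \eqref{eqn_asymptotically_product} to get a quasi-isometric embedding $\hat G_i\ra\hat G'_{\si(i)}$ with constants controlled by $L,A$. The difference is how coarse surjectivity is obtained. The paper argues directly: given $y_0'\in\hat G'_{\si(i)}$ and a near-minimizer $y_1$ for its distance $\rho$ to the image of the slice map, it splices together a point $x_0'\in\hat G'$, pushes it through the quasi-inverse $\Phi'$, and then applies \eqref{eqn_asymptotically_product} \emph{to $\Phi'$} to show $\rho$ is bounded by a constant depending only on $L,A$. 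You instead compose slice maps for $\Phi$ and its quasi-inverse around a permutation cycle to get a self-quasi-isometric embedding $\Xi_i:\hat G_i\ra\hat G_i$, and conclude via a coarse co-Hopf lemma for groups of polynomial growth, which you prove by the blow-down--to--Carnot-group argument combined with invariance of domain (the rescaling about the near-closest image point $z_k$, rather than a fixed basepoint, is exactly the right fix for the a priori scale-dependent density defect). Your co-Hopf lemma is correct and is a clean conceptual packaging, but it is an extra nontrivial ingredient; the paper's direct argument keeps the constants explicit and avoids it. Both approaches rest on the same quantitative content (Theorem~\ref{thm_qis_between_products_intro} applied to $\Phi$ and to its quasi-inverse) and both ultimately invoke blow-down to the asymptotic cone.
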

\noindent
We recall that the quasi-isometry classification of nilpotent groups has remained a major open problem in geometric group theory since Pansu's work \cite{pansu,shalom}; Corollary~\ref{cor_factors_are_qi_intro} shows that this reduces to the case of groups which are indecomposable (in an appropriate sense).

Our next result is a regularity theorem for nondegenerate Sobolev maps between certain complexified Carnot groups.  For simplicity we only state the result for complex Heisenberg groups here, and refer the reader to Section~\ref{sec_complex_heisenberg_setup} for the general case.
\begin{theorem}
\label{thm_qc_complex_heisenberg_holo_antiholo_intro}
Let $\H_n^\C$ denote the complexification of the $n^{th}$ Heisenberg group $\H_n$, and let $U\subset \H_n^\C$ be a connected open set.  If  $f:U\ra \H_n^\C$ belongs to $W^{1,p}_{\loc}$ for some $p >4n+2$, and the Pansu differential $D_Pf(x)$ is an isomorphism for a.e. $x\in U$, then $f$ is holomorphic or antiholomorphic.   
\end{theorem}

\begin{corollary}
\label{cor_qc_complex_heisenberg_holo_antiholo}
Any quasiconformal homeomorphism $\H_n^\C\supset U\ra U'\subset \H_n^\C$ between connected open subsets is either holomorphic or antiholomorphic.
\end{corollary}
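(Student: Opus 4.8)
The plan is to obtain the corollary as an immediate consequence of Theorem~\ref{thm_qc_complex_heisenberg_holo_antiholo}: I would check that every quasiconformal homeomorphism between connected open subsets of $\H_n^\C$ automatically satisfies the hypotheses of that theorem, and then simply quote it. The only input needed beyond the theorem is the standard regularity theory for quasiconformal mappings of Carnot groups recalled in the introduction: if $G$ is a Carnot group and $f\colon G\supset U\to U'\subset G$ is quasiconformal, then $f\in W^{1,p}_{\loc}(U)$ for some exponent $p$ strictly larger than the homogeneous (Hausdorff) dimension of $G$, and the approximate Pansu differential $D_Pf(x)$ is a graded group isomorphism for almost every $x\in U$. (This higher-integrability phenomenon is the Carnot-group analogue of Bojarski's theorem and requires no citation beyond those already given.)

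The second step is to record the homogeneous dimension of $G=\H_n^\C$ and compare it with the exponent threshold in Theorem~\ref{thm_qc_complex_heisenberg_holo_antiholo}. The stratification of $\fh_n$ has $\dim V_1=2n$ and $\dim V_2=1$, and complexification doubles the real dimension of each layer while preserving the bracket relations, so $\H_n^\C$ is a two-step Carnot group whose horizontal layer has real dimension $4n$ and whose center has real dimension $2$; hence its homogeneous dimension equals $1\cdot 4n+2\cdot 2=4n+4$. In particular $4n+4>4n+2$, so any quasiconformal homeomorphism $f\colon U\to U'$ between open subsets of $\H_n^\C$ lies in $W^{1,p}_{\loc}$ for some $p>4n+2$ and has an a.e.\ invertible Pansu differential. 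Since $U$ is connected, Theorem~\ref{thm_qc_complex_heisenberg_holo_antiholo} applies and gives that $f$ is holomorphic or antiholomorphic, which is the assertion.

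There is no substantial obstacle here; the argument is a short reduction. The only points that require care are invoking the correct form of the Sobolev / higher-integrability regularity for quasiconformal maps between Carnot-group domains, and the bookkeeping that complexification doubles the dimensions of the layers, so that the threshold $4n+2$ appearing in Theorem~\ref{thm_qc_complex_heisenberg_holo_antiholo} sits strictly below the homogeneous dimension of $\H_n^\C$. If one wishes, the conclusion can be sharpened using that $f$ is a homeomorphism with almost everywhere invertible differential: $f$ is then a biholomorphism or an anti-biholomorphism of $U$ onto $U'$, with the notion of holomorphy understood relative to the complex Lie group structure of $\H_n^\C$ as set up in Section~\ref{sec_complex_heisenberg_setup}.
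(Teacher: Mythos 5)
Your reduction is correct and is exactly the route the paper has in mind: quasiconformal homeomorphisms of Carnot groups lie in $W^{1,p}_{\loc}$ for some $p$ strictly above the homogeneous dimension and have an a.e.\ invertible Pansu differential (Theorem~\ref{thm_properties_qc_homeomorphisms}), the homogeneous dimension of $\H_n^\C$ is $4n+4 > 4n+2$, and Lemma~\ref{lem_complex_heisenberg_auts} supplies the automatic dichotomy $J$-linear/$J$-antilinear needed to feed this into Theorem~\ref{thm_qc_complex_heisenberg_holo_antiholo} in the form it is actually proved in Section~\ref{sec_complex_heisenberg_setup}. The extra remark that $f$ is then bi(anti)holomorphic matches Corollary~\ref{cor_complex_heisenberg_bilipschitz} in the body of the paper.
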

\noindent
In the case of $C^2$ diffeomorphisms Corollary~\ref{cor_qc_complex_heisenberg_holo_antiholo} is due to Reimann-Ricci \cite{reimann_ricci}.     We also show that global quasiconformal homeomorphisms are rigid:   

\begin{theorem}
\label{thm_global_qc_ch_affine_intro}
The group of quasiconformal homeomorphisms $\H_n^\C\ra\H_n^\C$ is generated by left translations, complex graded automorphisms, and complex conjugation.
\end{theorem}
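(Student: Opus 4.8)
The plan is to deduce the rigidity statement from Corollary~\ref{cor_qc_complex_heisenberg_holo_antiholo} together with a Liouville/growth argument. First I would observe that, by Corollary~\ref{cor_qc_complex_heisenberg_holo_antiholo}, the given quasiconformal homeomorphism $f\colon\H_n^\C\ra\H_n^\C$ is holomorphic or antiholomorphic. Complex conjugation $\kappa$ is a graded automorphism of the underlying \emph{real} Carnot group (hence bilipschitz and quasiconformal) and is antiholomorphic, so after composing with $\kappa$ we may assume $f$ is holomorphic, and after composing with a left translation we may assume $f(e)=e$; it then suffices to show that $f$ is a complex graded automorphism. Holomorphicity makes $f$ real-analytic, hence Pansu differentiable everywhere, and makes $f$ a holomorphic homeomorphism of $\C^{2n+1}$, hence a biholomorphism with $\det Df$ nowhere zero; consequently $D_Pf(x)$ is a complex graded automorphism of $\H_n^\C$ for every $x$, with $\dist(D_Pf(x)\restr_{V_1})\le K$, where $K$ is the distortion constant.

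Next I would upgrade ``holomorphic'' to ``polynomial''. The standard growth estimate for global quasiconformal homeomorphisms of a Carnot group furnishes $\beta=\beta(K)<\infty$ and $C<\infty$ with $d_{CC}(f(x),e)\le C\,d_{CC}(x,e)^{\beta}$ for $d_{CC}(x,e)\ge1$; comparing the Carnot--Carath\'eodory gauge with the Euclidean norm on $\C^{2n+1}=\R^{4n+2}$ converts this into a polynomial bound for $|f(x)|$ in terms of $|x|$. Since the components of $f$ are entire, Cauchy's estimates force $f$ to be a polynomial map, and applying the same reasoning to the global quasiconformal homeomorphism $f^{-1}$ shows $f$ is a polynomial automorphism of $\C^{2n+1}$. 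In particular $\det Df$ is a nowhere-vanishing polynomial, hence a nonzero constant.

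Now I would show $D_Pf$ is constant. Quasiconformality forces $D_Pf$ to preserve the horizontal layer, so $f$ is a contact map: $f^*\al=c(x)\,\al$ for the complex contact form $\al$ and a holomorphic function $c$, and comparing with $\det Df$ gives $c^{\,n+1}=\mathrm{const}\cdot\det Df$, whence $c\equiv c_0\in\C^\times$ by connectedness. The block $A(x):=D_Pf(x)\restr_{V_1}$ then lies in the complex symplectic similitude group $\operatorname{GSp}(2n,\C)$ with fixed multiplier $c_0$, so $|\det A(x)|=|c_0|^n$; together with $\dist(A(x))\le K$ this forces $\|A(x)\|\le K|c_0|^{1/2}$, so $A$ is a bounded entire matrix-valued function, hence constant by Liouville. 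Therefore $D_Pf\equiv\Phi_0$ for a fixed complex graded automorphism $\Phi_0$. Finally, a Pansu-differentiable self-map of a Carnot group with constant Pansu differential $\Phi_0$ and fixing $e$ equals $\Phi_0$ (integrate the horizontal differential along horizontal curves joining points to $e$), so $f=\Phi_0$. Undoing the reductions, and noting that $\kappa$ normalizes the groups of left translations and of complex graded automorphisms, yields Theorem~\ref{thm_global_qc_ch_affine_intro}.

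The main obstacle is the polynomiality step: one needs to know that a global quasiconformal homeomorphism of $\H_n^\C$ has at most polynomial growth, so that holomorphicity is upgraded to being a polynomial automorphism. Everything after that is comparatively soft --- the linear-algebra observation that bounded distortion inside $\operatorname{GSp}(2n,\C)$ with a fixed multiplier forces bounded operator norm, and the elementary Carnot-group fact that a constant Pansu differential forces an affine map. One could alternatively bypass polynomiality entirely by applying the Pansu pullback theorem to the closed form $d\al$: this directly yields that the horizontal derivatives of $c$, and hence of the entries of $A$, vanish distributionally, giving $D_Pf$ constant; that route is the one closest in spirit to the rest of the paper.
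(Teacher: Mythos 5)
Your proposal is correct, and it takes a genuinely different route from the paper's. Both arguments start from Corollary~\ref{cor_qc_complex_heisenberg_holo_antiholo} (so the substantive use of the Pansu pullback machinery is the same) and both reduce to showing that a holomorphic quasiconformal homeomorphism fixing $e$ is a graded automorphism, which in both cases is finished with the observation that a constant Pansu differential plus $f(e)=e$ forces $f$ to be affine (Lemma~\ref{lem_affine} in the paper). The key step in between, however, is done very differently. The paper (Lemma~\ref{lem_global_qc_complex_heisenberg}) first upgrades $f$ to a quasisimilarity (bilipschitz modulo dilation) by a compactness argument, then proves that a globally Lipschitz holomorphic function on $\H_\C$ is affine via a Liouville argument on $Z_1u,Z_2u$, and uses this to show $f$ respects the center foliation before descending to the abelianization. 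You instead invoke the power-type distortion estimate for global quasisymmetric homeomorphisms to get a polynomial growth bound, upgrade $f$ (and $f^{-1}$) to polynomial automorphisms by Cauchy estimates, deduce $\det Df\equiv\text{const}$ and hence a constant contact multiplier $c_0$, and then apply Liouville (or polynomiality) to the bounded first-layer block $A(x)\in\operatorname{GSp}(2n,\C)$ with fixed multiplier. Your route avoids the quasisimilarity lemma entirely but needs the Tukia--V\"ais\"al\"a power-law upgrade of the distortion function, which the paper does not state; it also exploits the specific $\operatorname{GSp}$-structure of the Heisenberg case, whereas the paper's argument is arranged so it extends to arbitrary complexified Carnot groups with little change. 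One small caution about your closing remark: applying the pullback theorem to $d\al$ would at most give constancy of the multiplier $c$, not constancy of the whole block $A$, so that alternative would still need a separate argument for $A$ (e.g., the Liouville/boundedness step you use, or the paper's Lipschitz-holomorphic lemma).
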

\noindent
In contrast to global quasiconformal homeomorphisms, locally defined quasiconformal homeomorphisms $\H_n^\C\supset U\ra U'\subset \H_n^\C$ are quite flexible, and come in infinite dimensional families, see Section~\ref{sec_complex_heisenberg_setup}.  

We now shift our attention to quasiconformal homeomorphisms $G\supset U\ra U'\subset G$ for a general Carnot group $G$.  For a smooth diffeomorphism, being locally quasiconformal is  equivalent to being a contact diffeomorphism,    i.e. preservation of the subbundle of the tangent bundle $TG$ defined by the first layer $V_1\subset \fg$.  A parameter count indicates that the  contact condition is formally overdetermined unless $G$ is isomorphic to the Engel group or to $\R^k\times \H_\ell$ for some $k,\ell$, and hence one expects some form of rigidity.  However, the analytical character of the condition is quite different for different groups: 
\bit
\item When $G=\H\times\H$ contact diffeomorphisms must be products (locally) but otherwise are quite flexible \cite{cowling_reimann_three_examples}.
\item When $G=\H_n^\C$ the contact condition is locally flexible, but still ``hypoelliptic'', i.e. contact diffeomorphisms are holomorphic or antiholomorphic.
\item When $G$ is an $H$-type group with center of dimension at least $3$, (e.g. one of the Carnot groups studied by Pansu) or a free Carnot  group of step $s\geq 3$,  then the smooth contact embeddings $G\supset U\ra G$ form a finite dimensional family when $U$ is a connected open subset \cite{pansu,reimann_h_type,warhurst_tanaka_prolongation_free}.
\eit
Following Ottazzi-Warhurst  \cite{{ottazzi_warhurst}}, for $1\leq k\leq \infty$ we say that a Carnot group $G$ is {\bf $C^k$-rigid} if the space of $C^k$ contact embeddings $G\supset U\ra G$ forms a finite dimensional family for every connected open subset $U\subset G$. Building on the theory of overdetermined systems \cite{guillemin_quillen_sternberg,spencer}, Ottazzi-Warhurst \cite{{ottazzi_warhurst}} gave several different characterizations of $C^\infty$-rigid groups, and proved the following regularity theorem:
\begin{theorem}[\cite{ottazzi_warhurst}]
\label{thm_ow_c2_smooth}
If $G$ is a $C^\infty$-rigid Carnot group, then any $C^2$-contact diffeomorphism $G\supset U\ra U'\subset G$ is $C^\infty$-smooth\footnote{Recently  Jonas Lelmi improved Theorem~\ref{thm_ow_c2_smooth}, replacing the $C^2$ regularity assumption with $C^1$ (or even Euclidean bilipschitz); the same result was shown 
by Alex Austin for  the $(2,3,5)$ distribution \cite{lelmi,austin_235}.  It is easy to see that smooth contact diffeomorphisms are actually real analytic.}.  
\end{theorem}
\noindent
In view of the theorem, a group is $C^k$-rigid for $k\geq 2$ if and only if it is  $C^\infty$-rigid, so we will call such groups {\bf rigid}.  This leads naturally to the following (cf. \cite[p.2]{ottazzi_warhurst}):
\begin{regularity_conjecture}\label{conj_regularity_conjecture}
If $G$ is a rigid Carnot group, then any quasiconformal homeomorphism $G\supset U\ra U'\subset G$ is $C^\infty$.  
\end{regularity_conjecture}
\noindent
The conjecture is known for groups whose graded automorphisms act conformally on the first layer, by subelliptic regularity \cite{capogna,capogna_cowling}; it follows from Theorem~\ref{thm_main_product_intro} that the Regularity Conjecture holds for a product of Carnot groups $\prod_iG_i$ if it holds for all of the factors $\{G_i\}$.  We will present further results on rigid groups elsewhere \cite{kmx_iwasawa}, and focus here on the case of nonrigid groups.  Our main result for such groups is that one always has partial rigidity,  apart from some exceptional cases:
\begin{theorem}[See Section 4 of \cite{KMX2}  for definitions]
\label{thm_nonrigid_intro}
If $G$ is a nonrigid Carnot group with homogeneous dimension $\nu$, then one of the following holds:
\ben
\item $G$ is isomorphic to $\R^n$ or to a real or complex Heisenberg group $\H_n$, $\H_n^\C$ for some $n\geq 1$.
\item There is a closed subgroup $\{e\}\subsetneq H\subsetneq G$, a constant $K$, and a finite    set   $A$ of graded automorphisms of $G$ with the following properties:    
\bit
\item For every $p>\nu$, $x\in G$, $r\in(0,\infty)$, and every $W^{1,p}_{\loc}$-mapping $$f:G\supset B(x,r)\ra G$$ such that the sign of $\det(D_Pf)$ is constant almost everywhere, then for some $\Phi\in A$ the restriction of the composition $\Phi\circ f$ to the subball $B(x,\frac{r}{K})$ preserves the coset foliation of $H$. In particular, the conclusion holds for quasiconformal homeomorphisms.
\item The Lie algebra of $H$ is generated by a linear subspace $\{0\}\subsetneq W\subsetneq V_1$ with $[W,V_j]=\{0\}$ for all $j\geq 2$.
\eit
\een
\end{theorem}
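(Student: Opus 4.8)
The proof separates into a structural (Lie‑theoretic) part and an analytic part. Throughout, the hypothesis that $\det(D_Pf)$ has essentially constant sign lets us assume that $D_Pf(x)$ is a graded automorphism of $\fg$ for a.e. $x\in U$: a graded endomorphism of $\fg$ that is invertible on $V_1$ is automatically an automorphism, since $V_1$ generates $\fg$; in particular $D_Pf(x)\in\aut_{gr}(\fg)$ acts on every $\aut_{gr}(\fg)$‑invariant subset of $\fg$. \emph{Step 1 (structure of nonrigid groups).} By the Tanaka‑prolongation characterization of rigidity of Ottazzi--Warhurst \cite{ottazzi_warhurst}, $G$ is nonrigid exactly when the first prolongation $\fg_1$ is nonzero. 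Analyzing how a nonzero $\phi\in\fg_1$ ``acts along $V_1$'' yields a dichotomy: either $\fg_1$ is of contact type, in which case $G$ is isomorphic to $\R^n$, $\H_n$, or $\H_n^\C$ — precisely the groups for which conclusion (2) must fail, since their locally defined contact maps form infinite‑dimensional families respecting no coset foliation — or else $\fg$ carries a canonical nonzero proper linear subspace $\{0\}\subsetneq W\subsetneq V_1$ with $[W,V_j]=\{0\}$ for all $j\geq2$, whose $\aut_{gr}(\fg)$‑orbit $\{W_1=W,\dots,W_m\}$ in the Grassmannian of $(\dim W)$‑planes is finite. Put $H:=\exp\langle W\rangle$, $H_k:=\exp\langle W_k\rangle$, and let $A\leq\aut_{gr}(G)$ be a finite subgroup acting transitively on $\{W_1,\dots,W_m\}$. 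Since $W_k$ bracket‑generates $\langle W_k\rangle$, the leaves of the left‑invariant ``$W_k$‑horizontal distribution'' are exactly the cosets of $H_k$; and because $[W_k,V_j]=\{0\}$ for $j\geq2$, the subspace $\mathfrak m_k:=\langle W_k\rangle+\bigoplus_{j\geq2}V_j=W_k\oplus\bigoplus_{j\geq2}V_j$ is an ideal with $\fg/\mathfrak m_k$ abelian of dimension $c:=\dim V_1-\dim W$.

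\emph{Step 2 (a closed form detecting $W$, and its Pansu pullback).} For each $k$ let $\om^{(k)}$ generate the one‑dimensional space $\bigwedge^{c}(W_k^\perp)\subset\bigwedge^{c}V_1^*$, where $W_k^\perp\subset V_1^*$ is the annihilator of $W_k$; regarded as a left‑invariant differential form on $G$ it is the decomposable horizontal $c$‑form with null space $W_k$, equivalently the pullback under $G\to G/\exp\mathfrak m_k\cong\R^c$ of a volume form. Left‑invariant horizontal forms on a Carnot group are closed (brackets never land in $V_1$), so each $\om^{(k)}$ is closed, and since $c\leq\dim V_1<p$, the Pansu pullback theorem — in the horizontal case, which is the direct analogue of Reshetnyak's theorem \cite{reshetnyak_space_mappings_bounded_distortion} on pullback of closed forms by Sobolev maps — shows that $f_P^*\om^{(k)}$ is distributionally closed. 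Pointwise a.e., $f_P^*\om^{(k)}(x)=(D_Pf(x)|_{V_1})^*\om^{(k)}$, a nonzero decomposable $c$‑form whose null space is $(D_Pf(x)|_{V_1})^{-1}(W_k)$; as $D_Pf(x)$ permutes $\{W_1,\dots,W_m\}$, this null space equals some $W_\ell$, so $f_P^*\om^{(k)}$ is valued a.e. in the finite union of punctured lines $\bigsqcup_{\ell}\bigl(\R\om^{(\ell)}\setminus\{0\}\bigr)$.

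\emph{Step 3 (excluding oscillation, and conclusion).} Write $f_P^*\om^{(k)}=\sum_\ell a^{(k)}_\ell\,\om^{(\ell)}$ with $a^{(k)}_\ell\in L^p_{\loc}$ and $a^{(k)}_\ell a^{(k)}_{\ell'}=0$ a.e. for $\ell\neq\ell'$; by the previous paragraph $a^{(k)}_\ell(x)\neq0$ iff $D_Pf(x)(W_\ell)=W_k$. Expanding $0=d\bigl(f_P^*\om^{(k)}\bigr)=\sum_\ell da^{(k)}_\ell\wedge\om^{(\ell)}$ in a left‑invariant coframe adapted to the grading and separating by weight, one obtains, using the linear independence of the $\om^{(\ell)}$ (a consequence of the product‑like position of the $W_\ell$ supplied by Step 1), that $Xa^{(k)}_\ell=0$ distributionally for every left‑invariant vector field $X$ in $\bigoplus_{j\geq2}V_j$ and every $X$ tangent to $W_\ell$; equivalently $a^{(k)}_\ell$ is invariant under $\exp\mathfrak m_\ell$, hence descends to a function on $G/\exp\mathfrak m_\ell\cong\R^c$. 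Combining this with $a^{(k)}_\ell a^{(k)}_{\ell'}=0$ and the connectedness of $U$, a standard argument forces all but one of the $a^{(k)}_\ell$ to vanish identically. Thus for each $k$ there is a single $x$‑independent index, and as $k$ varies these assemble into one permutation $\rho$ of $\{W_1,\dots,W_m\}$ with $D_Pf(x)(W_\ell)=W_{\rho(\ell)}$ for a.e. $x$ and all $\ell$. Choosing $\Phi\in A$ with $\Phi_*(W_{\rho(1)})=W$, we get $D_P(\Phi\circ f)(x)(W)=W$ for a.e. $x$, so $D_P(\Phi\circ f)(x)$ preserves $\langle W\rangle$; a Fubini‑type argument for Sobolev maps then shows that the restriction of $\Phi\circ f$ to a.e. leaf of the $W$‑horizontal distribution is a Sobolev map whose Pansu differential preserves $W$, hence maps that leaf into a single $H$‑coset, and continuity of $\Phi\circ f$ (from $p>\nu$) upgrades this to every leaf; that is, $\Phi\circ f$ preserves the coset foliation of $H$. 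Finally, a quasiconformal homeomorphism lies in $W^{1,p}_{\loc}$ for some $p>\nu$ with $D_Pf$ an isomorphism of fixed sign a.e., so the statement applies to it.

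The main obstacle is Step 1: proving that every nonrigid Carnot group outside the three exceptional families $\{\R^n,\H_n,\H_n^\C\}$ carries a canonical subspace $W\subsetneq V_1$ with $[W,V_j]=\{0\}$ for $j\geq2$ and finite $\aut_{gr}(\fg)$‑orbit, with $W$ and its conjugates sitting in product‑like position. This is the Lie‑theoretic heart of the theorem and rests on the structure theory of Tanaka prolongations; granting it, the analytic argument (Steps 2--3) runs along lines parallel to the proof of the product rigidity theorem~\ref{thm_main_product_intro}.
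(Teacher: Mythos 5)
The overall plan — structure theory for nonrigid groups, pull back closed left‑invariant forms, use the pullback theorem to kill oscillation of the Pansu differential, then integrate — matches the paper's, and your Step~1 is a reasonable (if unproved) merger of the paper's cases (b) and (c). However, Step~2 has a genuine and fatal gap.

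You claim that $f_P^*\omega^{(k)}$ is distributionally closed because $\omega^{(k)}$ is a closed horizontal form and $c<p$, appealing to a ``horizontal'' analogue of Reshetnyak. This is false, and the Pullback Theorem (Theorem~\ref{th:pull_back}) cannot be used to establish it. Because $\omega^{(k)}$ is a wedge of $c$ first‑layer duals, its weight is exactly $-c$, the maximum possible weight for a $c$‑form; so the weight constraint $\wt(\omega^{(k)})+\wt(d\eta)\le-\nu$ forces $\wt(d\eta)\le-\nu+c$, which by Lemma~\ref{lem_weight_facts}~\eqref{it:weight_facts_vanishing} is the absolute minimum weight for a form of degree $N-c$. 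To realize this weight with $\eta=\psi\zeta$ one would need a left‑invariant form $\zeta$ of codegree $c+1$ and minimal weight $-\nu+c+1$ that is closed (otherwise $\psi\,d\zeta$ contributes a higher‑weight term). But such forms are $i_X\omega$ with $X\in\Lambda_{c+1}V_1$, and $d(i_X\omega)$ is generically nonzero by \eqref{eq:exterior_codegree3}. Concretely, for $G=\H\times\H$ and $\omega^{(1)}=\theta_4\wedge\theta_5$: one needs a closed degree‑$3$ form of weight $-5$, i.e.\ a closed $i_{X\wedge Y\wedge Z}\omega$ with $X,Y,Z\in V_1$, and direct computation shows none exists (the cyclic sum $A$ in \eqref{eq:exterior_codegree3bis} never vanishes for a nonzero tri‑vector in $\Lambda_3V_1$ here). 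So the theorem yields no nontrivial identity for $\omega^{(1)}$. Moreover, distributional closedness is actually false even for product maps: if $f=f_1\times f_2$ is a product of contact diffeomorphisms of $\H$, then $f_P^*\theta_{45}=a\,\theta_{45}$ with $a$ a function of the second factor whose vertical derivative $X_6a$ is generically nonzero, so $d(f_P^*\theta_{45})=(X_6a)\theta_{645}\ne0$. Your appeal to Reshetnyak cannot be salvaged by viewing $\pi\circ f:U\to\R^{\dim V_1}$ as a Euclidean Sobolev map, because the Sobolev regularity is only in horizontal directions, which is precisely why the Carnot‑group pullback theorem carries a weight restriction that the Euclidean theorem does not.

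What the paper actually does is to build forms anchored in the second layer so as to drop the weight: for products, $\theta_{123}$ (the lifted volume form of a factor, weight $-4$ rather than the horizontal weight $-2$); for product quotients of the first Heisenberg group with $\dim K=1$, the $3$‑forms $\omega_{ij}=(\gamma_i+\gamma_j)\wedge\tau_{i,j}$ of weight $-4$ (Section~\ref{sec_dim_k_equals_1}); for $\dim K\ge2$, the $2$‑forms $\gamma_i$ together with codegree‑$2$ forms $i_Z\omega$, $Z\in\oplus_i\Lambda_2V_{1,i}$ (Section~\ref{sec_conformal_case}); and for complex and higher real Heisenberg factors, $2$‑forms dual to $\ker L_{[\,]}\cap\Lambda_2V_{1,i}$ (Section~\ref{sec_higher_product_quotients}). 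In each case the second layer is essential for the weight bookkeeping. A second gap: the paper's Case~(c) is not a straight generalization of the product case but splits into three subcases by $\dim K$ and factor type, and your proposal glosses over this. Finally, your assertion that the Tanaka prolongation yields the clean dichotomy of Step~1 is granted but not argued; the paper derives its version from the Ottazzi--Warhurst rank‑$\le1$ criterion via a fairly long algebraic analysis (Section~\ref{sec_structure_nonrigid_irreducible_first_layer}), and Step~1 as you state it is essentially equivalent to that analysis.
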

Thus, apart from the exceptional cases in (1), quasiconformal homeomorphisms  preserve a foliation, up to post-composition with a graded automorphism.   This suggests that for the (nonexceptional) nonrigid cases there may be a more detailed description of quasiconformal homeomorphisms along the lines of \cite{xie_filiform} for model filiform groups.  To our knowledge such a description is not known even for smooth contact diffeomorphisms, either locally or globally.

Another aspect of quasiconformal regularity/flexibility has to do with the Sobolev exponent, i.e. higher integrability of the derivative.  Quasiconformal homeomorphisms $f:G\supset U\ra U'\subset G$ are always in $W^{1,p}_{\loc}$ for some $p$ strictly larger than the homogeneous dimension of $G$, where $p$  depends on $G$ and the quasiconformal distortion of $f$ \cite{heinonen_koskela}.  However, except for $\R^n$ and the Heisenberg groups $\H_n$ (see for example \cite{balogh_non_bilipschitz}), all known examples of quasiconformal homeomorphisms are in $W^{1,\infty}_{\loc}$, i.e. are locally bilipschitz.  This motivated the following:
\begin{conjecture}[Xie]
\label{conj_qs_rigidity}
If $G$ is a Carnot group other than $\R^n$ or $\H_n$ for some $n$, then every quasiconformal homeomorphism $f:G\supset U\ra U'\subset G$ is locally bilipschitz; moreover, if $U=G$ then $f$ is bilipschitz.
\end{conjecture}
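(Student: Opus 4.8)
\medskip

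\noindent We outline a strategy, noting first that the statement needs a mild amendment. The plan is to reduce, via the structural results above, to an indecomposable nonabelian Carnot group and then bootstrap the a priori Sobolev regularity of $f$ to membership in $W^{1,\infty}_{\loc}$. The caveat is that for a nontrivial product such as $\H_1\times\H_1$ or $\H_1\times\R$, Theorem~\ref{thm_main_product_intro} shows every quasiconformal homeomorphism is, up to reindexing, a product of quasiconformal homeomorphisms of the factors, and the product of a non-bilipschitz Kor\'anyi--Reimann stretch map of $\H_1$ with an identity map is a quasiconformal homeomorphism that is not locally bilipschitz. So the target should read: every quasiconformal homeomorphism of a Carnot group with \emph{no} factor isomorphic to $\R^n$ or $\H_n$ is locally bilipschitz, and bilipschitz if the domain is all of $G$; by Theorem~\ref{thm_main_product_intro} this reduces immediately to the case that $G$ is indecomposable, nonabelian, and not isomorphic to any $\H_n$. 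It also suffices to prove either that $f$ is $C^\infty$, or directly that $f\in W^{1,\infty}_{\loc}$: a smooth contact diffeomorphism has continuous, everywhere-invertible Pansu differential, hence is automatically locally bilipschitz.

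\medskip

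\noindent\textbf{The rigid case.} If $G$ is rigid in the sense of Ottazzi--Warhurst, then, granting the Regularity Conjecture~\ref{conj_regularity_conjecture}, any quasiconformal homeomorphism is $C^\infty$ and so locally bilipschitz by the preceding remark; when $U=G$ one upgrades to a uniform bilipschitz bound using that for rigid $G$ a globally defined contact automorphism lies in the finite-dimensional group generated by left translations (isometries) and a compact family of graded automorphisms and inversions, all of which are bilipschitz. This case is therefore conditional on the (still open) Regularity Conjecture, and is taken up in \cite{kmx_in_preparation}.

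\medskip

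\noindent\textbf{The nonrigid case.} Suppose $G$ is nonrigid and, after the reduction, indecomposable, nonabelian, and distinct from every $\H_n$; apply Theorem~\ref{thm_nonrigid_intro}. If $G\cong\H_n^\C$, then by Theorem~\ref{thm_qc_complex_heisenberg_holo_antiholo} every quasiconformal homeomorphism is holomorphic or antiholomorphic, hence $C^\infty$, hence locally bilipschitz, and when $U=G$ the conclusion follows from Theorem~\ref{thm_global_qc_ch_affine_intro}, whose generators are all bilipschitz. Otherwise $G$ falls under case (2): post-composing $f$ with a suitable $\Phi\in A$, we may assume $f$ preserves the coset foliation $\mathcal F_H$ of the horizontally generated subgroup $H$, where $\fh=\Lie(H)=W\oplus[W,W]$ with $W\subsetneq V_1$ and $[W,V_j]=\{0\}$ for $j\ge2$ (so $H$ is at most two-step). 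Preservation of $\mathcal F_H$ means exactly that $D_Pf(x)$ lies, for a.e.\ $x$, in the proper closed subgroup $\stab(\fh):=\{a\in\aut(\fg):a(\fh)=\fh\}$. The plan then has three steps. (i) Apply Theorem~\ref{thm_nonrigid_intro} also to the twisted maps $a\circ f\circ b$ for graded automorphisms $a,b$ — again $W^{1,p}_{\loc}$ with $\det D_Pf$ of essentially constant sign — to deduce that $D_Pf(x)$ lies a.e.\ in the intersection of $\stab(\fh)$ with finitely many of its $\aut(\fg)$-translates; since $f$ is $K$-quasiconformal for fixed $K$, the ``shape'' of $D_Pf(x)$ is already bounded, and this intersection leaves only the overall scale (the Jacobian $J_f=\det D_Pf$) free. (ii) Read off from the same twisted statements a nontrivial grading-compatible flag of horizontally generated subgroups, giving foliations of $U$ along which $f$ restricts to quasiconformal homeomorphisms of leaves and descends to a quasiconformal homeomorphism of each local leaf space, all of strictly smaller homogeneous dimension. (iii) Run induction on $\nu$ together with a Gehring-type self-improvement: the constrained, block-triangular form of $D_Pf$ ties the directional stretch factors to one another, so that, feeding the inductive hypothesis through the $A_\infty$-weight property of $J_f$, the a priori membership $f\in W^{1,p}_{\loc}$, $p>\nu$ (from \cite{heinonen_koskela}), is bootstrapped to $f\in W^{1,\infty}_{\loc}$, in the spirit of Xie's treatment of model filiform groups \cite{xie_filiform}.

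\medskip

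\noindent\textbf{The global bound and the main obstacle.} Because the estimates in both cases produce bounds on $\|D_Pf\|_{L^\infty}$ and $\|D_Pf^{-1}\|_{L^\infty}$ depending only on $K$ and $G$ — in particular independent of the subdomain — the passage from ``locally bilipschitz'' to ``bilipschitz'' when $U=G$ is then automatic. The main obstacles are the two already flagged. First, the rigid case is not presently detachable from the open Regularity Conjecture~\ref{conj_regularity_conjecture}. Second, and more substantively, in the nonrigid case the leaves of $\mathcal F_H$ can themselves be Heisenberg groups — for instance $\dim W=2$, $\dim[W,W]=1$ yields leaves isomorphic to $\H_1$ — on which the restrictions of $f$ need \emph{not} be bilipschitz; ruling out such wild leaf behavior forces one to intertwine the leaf data with the transverse constraints from the twisted rigidity statements, and to make the Gehring-type self-improvement quantitatively uniform enough to reach $p=\infty$. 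This quantitative, essentially non-PDE self-improvement is the genuinely delicate point.
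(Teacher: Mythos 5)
Your opening ``counterexample'' is incorrect, which undermines the amendment you propose. If $g:\H_1\ra\H_1$ is quasiconformal but fails to be locally Lipschitz at some $x$, then $g\times\id:\H_1\times\H_1\ra\H_1\times\H_1$ is \emph{not} quasiconformal: at the point $(x,y)$, one bounds the numerator of $H_{g\times\id}$ from below by $\sup\{d(g(x),g(x')):d(x,x')\leq r\}$ (take $y'=y$) and the denominator from above by $r$ (take $x'=x$, $d(y,y')=r$), and the ratio is unbounded as $r\ra 0$ precisely because $g$ is not Lipschitz at $x$; the same reasoning with roles swapped handles failure of the lower Lipschitz bound. So $K$-quasiconformality of a product forces each factor map to be pointwise bilipschitz with a uniform constant, hence locally bilipschitz. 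The conjecture as stated is therefore not contradicted by Theorem~\ref{thm_main_product_intro}, and indeed Theorem~\ref{qs_rigidity_theorem} applies to $\H_1\times\H_1$, $\H_1\times\R^n$, and so on without any exclusion.

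In the nonrigid, non-complex-Heisenberg case your steps (i)--(iii) constitute a research plan rather than a proof, and they describe a different and, in step (iii), a PDE-flavored mechanism that does not appear in the paper at all. After Theorem~\ref{thm_reducible_unless_complex_heisenberg} produces (up to post-composition with a graded automorphism) preservation of the coset foliation of $H$, with $\fh$ generated by $W\subsetneq V_1$ and $[W,V_i]=\{0\}$ for $i\geq 2$, the bilipschitz bound comes from Proposition~\ref{lem_subalgebra_ideal}, a localized version of the Le~Donne--Xie argument. There is no Gehring-type self-improvement, no $A_\infty$-weight, and no induction on homogeneous dimension. The engine is Proposition~\ref{pr:one_sided_almost_parallel}: for every $X\in W$ at least one of the half-orbits $\{x_0\exp(tX):t\geq 0\}$, $\{x_0\exp(tX):t\leq 0\}$ stays a definite fraction of $d(x_0,yH)$ away from any coset $yH$ not containing $x_0$. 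The proof of Proposition~\ref{lem_subalgebra_ideal} is a normalization-and-contradiction argument: suppose $f_j$ are $\eta$-quasisymmetric with $f_j(e)=e$, $\diam f_j(B(e,1))=1$, and $d(f_j(x_j),f_j(y_j))\leq j^{-1}d(x_j,y_j)$ with $x_j,y_j\ra e$; one constructs a parallel coset $z_jH$ at distance $r_j:=d(x_j,y_j)$ from $x_j$ whose image coset stays within $O(j^{-1}r_j)$ of $f_j(x_j)H$; then chaining $O(r_j^{-1})$ balls of radius $\sim r_j$ along a unit-length horizontal $W$-path, each disjoint from $z_jH$ and hence with image of diameter $O(j^{-1}r_j)$, covers a macroscopic ball by images of total diameter $O(j^{-1})\ra 0$, contradicting the normalization. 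This argument works entirely transverse to the foliation and therefore sidesteps --- rather than confronts --- the obstacle you correctly flag, namely that the leaf restrictions of $f$ need not be bilipschitz.

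Finally, the paper proves the conjecture only for nonrigid $G$; you correctly mark the rigid case as conditional on Regularity Conjecture~\ref{conj_regularity_conjecture}, but your reduction to indecomposable groups is unnecessary once the spurious counterexample is removed, and it is not the route the paper takes.
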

Using Theorem~\ref{thm_nonrigid_intro} and a variation on \cite{Shan_Xie, Xie_Pacific2013,LeDonne_Xie} we show:
\begin{theorem}
Conjecture~\ref{conj_qs_rigidity} holds for nonrigid Carnot groups.  
\end{theorem}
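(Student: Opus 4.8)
The plan is to combine Theorem~\ref{thm_nonrigid_intro} with a fiber-preserving rigidity argument. Let $G$ be a nonrigid Carnot group with $G\not\cong\R^n$ and $G\not\cong\H_n$, and let $f:G\supset U\ra U'\subset G$ be a quasiconformal homeomorphism between connected open sets. Since $f$ is quasiconformal it lies in $W^{1,p}_{\loc}$ for some $p$ strictly larger than the homogeneous dimension, and $\det(D_Pf)$ has essentially constant sign, so Theorem~\ref{thm_nonrigid_intro} applies. Because $G$ is not $\R^n$ or $\H_n$, alternative (1) forces $G\cong\H_n^\C$, while in alternative (2) there is a closed horizontally generated subgroup $\{e\}\subsetneq H\subsetneq G$ and a finite group $A$ of graded automorphisms of $G$ such that $\Phi\circ f$ preserves the coset foliation of $H$ for some $\Phi\in A$, where $\fh:=\Lie(H)$ is generated by a subspace $\{0\}\subsetneq W\subsetneq V_1$ with $[W,V_j]=\{0\}$ for all $j\geq 2$.

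\emph{Case (1): $G\cong\H_n^\C$.} Here I would appeal to Corollary~\ref{cor_qc_complex_heisenberg_holo_antiholo}, which says $f$ is holomorphic or antiholomorphic; after possibly composing with complex conjugation we may assume $f$ is holomorphic. A holomorphic homeomorphism between open subsets of $\C^N$ is biholomorphic, so $f$ is a $C^\infty$ diffeomorphism, and being quasiconformal it is then a contact diffeomorphism. A smooth contact diffeomorphism is locally bilipschitz for the CC metric: a horizontal curve maps to a horizontal curve whose length is controlled by the operator norm of $df$ on the horizontal distribution, which is bounded on compact subsets of $U$, so $f$ is locally Lipschitz, and the same argument applied to $f^{-1}$ gives the lower bound. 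When $U=G$, Theorem~\ref{thm_global_qc_ch_affine_intro} expresses $f$ as a composition of left translations, complex graded automorphisms, and complex conjugation, each a bilipschitz self-homeomorphism of $\H_n^\C$, so $f$ is globally bilipschitz. This disposes of Case (1).

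\emph{Case (2).} Since a graded automorphism is bilipschitz, $f$ is (locally) bilipschitz if and only if $g:=\Phi\circ f$ is, and $g$ preserves the coset foliation $\mathcal F_H=\{xH:x\in G\}$. I would first record the Lie-algebraic consequences of the hypotheses $[W,V_j]=\{0\}$ $(j\geq2)$ and $\fh=\langle W\rangle$: by the Jacobi identity $[W,[W,W]]\subseteq[W,V_2]=\{0\}$, so $\fh=W\oplus[W,W]$ is $2$-step; moreover $[W,V_j]=\{0\}$ for $j\ge 2$ is exactly what ``decouples'' the $W$-directions from the higher layers, so that (i) each leaf $xH$ is bilipschitz equivalent to $H$ with its intrinsic CC metric, and (ii) the leaf space $G/H$ of left cosets carries a natural homogeneous metric (it need not be a group, as $H$ need not be normal) for which $\pi:G\ra G/H$ behaves like a metric fibration with product-like transverse structure, permitting a Fubini-type comparison between leafwise and transverse derivatives. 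One then checks that $g$ restricts to a uniformly quasisymmetric homeomorphism between leaves and induces a quasisymmetric homeomorphism $\bar g$ of $G/H$. At this point I would invoke a variant of the fiber-preserving rigidity results of \cite{Shan_Xie,Xie_Pacific2013,LeDonne_Xie}: a quasisymmetric homeomorphism preserving such a foliation, uniformly quasisymmetric on leaves and quasisymmetric on the (nondegenerate) base, is automatically bilipschitz --- and when $U=G$ the same argument applied to the foliation of all of $G$ yields a global bilipschitz bound. Equivalently, the foliation structure converts quasiconformality of $g$ into a two-sided $L^\infty$ bound on the Pansu Jacobian $\det D_Pg$, whence $g$ and $g^{-1}$ are locally Lipschitz.

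The main obstacle is Case (2): adapting the fiber-preserving rigidity machinery to the coset foliation produced by Theorem~\ref{thm_nonrigid_intro}. Three points require care: verifying that the restriction of $d_{CC}$ to a leaf $xH$ is bilipschitz to the intrinsic CC metric of $H$ (this is where $[W,V_j]=\{0\}$ for $j\ge2$ is essential, since in its absence a CC-geodesic of $G$ between two points of a leaf may be forced to leave the leaf); identifying the transverse/base metric and establishing the product-like structure needed for the Fubini-type estimate, despite $H$ generally being non-normal so that $G/H$ is only a homogeneous space; and turning the combination of leafwise uniform quasisymmetry and quasisymmetry of $\bar g$ into the $L^\infty$ bound on $\det D_Pg$, above and below. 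Handling the interaction of the leafwise and transverse directions is the technical heart of the argument.
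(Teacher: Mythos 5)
Your overall reduction is the paper's: apply Theorem~\ref{thm_nonrigid_intro}, dispatch $\H_n^\C$ via holomorphy/antiholomorphy, and for the remaining nonrigid groups use the coset-foliation preservation together with a version of the fiber-preserving rigidity results of Shan--Xie, Xie, and Le Donne--Xie. Case (1) is handled correctly; in fact the paper's Corollary~\ref{cor_complex_heisenberg_bilipschitz} gives a sharper quantitative form (a bilipschitz constant $K(\eta)$ on a definite subball), but your argument suffices for the local statement in Conjecture~\ref{conj_qs_rigidity}, and your appeal to Theorem~\ref{thm_global_qc_ch_affine_intro} for the global case is sound.

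The gap is in the mechanism you sketch for Case (2). You propose that uniform leafwise quasisymmetry plus quasisymmetry on the base $G/H$ yields a two-sided $L^\infty$ bound on $\det D_P g$ via a ``Fubini-type comparison,'' from which local Lipschitz bounds follow. This is not a complete argument as stated: quasisymmetric homeomorphisms of the leaves and of the base are each allowed arbitrary stretching of absolute scale, so there is no immediate Jacobian bound, and in fact the entire content of the Le Donne--Xie-type theorems is to rule this scale-stretching out by a geometric argument rather than by a pointwise Jacobian estimate. The paper's Proposition~\ref{lem_subalgebra_ideal} does not proceed through a Jacobian bound at all. Instead it uses a normalization/compactness/contradiction scheme: one normalizes a putative sequence of counterexamples $f_j$ with $f_j(e)=e$, $\diam f_j(B(e,1))=1$, and points $x_j,y_j\to e$ with $d(f_j(x_j),f_j(y_j))\leq j^{-1}d(x_j,y_j)$, then locates a nearby coset $z_jH$ whose image is extremely close to $f_j(x_j)$, and finally derives a contradiction by tracking the image of a long horizontal path inside $x_jH$: the path stays far from $z_jH$, while its image is forced by foliation preservation and quasisymmetry to stay near $f_j(z_jH)$, so its image has small diameter, violating the normalization.

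The geometric fact that powers this --- and that your sketch does not supply --- is Proposition~\ref{pr:one_sided_almost_parallel}: for cosets $x_0 H$ and $yH$ in a Carnot group satisfying $[W,V_j]=\{0\}$ for $j\geq 2$, moving from $x_0$ along a one-parameter horizontal subgroup of $H$ keeps the distance to $yH$ bounded below by $c\,d(x_0,yH)$ for all $t\geq 0$ or for all $t\leq 0$. Its proof is a Baker--Campbell--Hausdorff computation in which the hypothesis $[W,V_j]=\{0\}$ truncates the BCH series, making $N(A,B)=N(A_1,B_1)$ and $(-A)\ast B\ast A=B-[A_1,B_1]$ for $A\in\fh$. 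This is where your observation that $[W,V_j]=\{0\}$ ``decouples'' the $W$-directions is used, but the use is quite specific and algebraic, not a metric product decomposition. Also note that the paper's argument never needs $H$ to be normal or $G/H$ to be a metric space; it works with the normalizer $K$ of $H$ only to manufacture a ``parallel'' coset $z_jH$ whose image is provably close to $f_j(x_j)H$. To close the gap you would need to either establish your Jacobian bound honestly (which does not appear to follow from the stated ingredients) or recast Case (2) along the lines of Propositions~\ref{pr:one_sided_almost_parallel} and~\ref{lem_subalgebra_ideal}.
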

\noindent
See Theorem 4.1 in \cite{KMX2}  for a more precise statement.
Note that for rigid groups Conjecture~\ref{conj_qs_rigidity} would follow from the Regularity Conjecture and \cite{cowling_ottazzi}.   

Our next result, which is a key ingredient in the rigidity theorems above, concerns the interaction between Sobolev mappings and differential forms.  We recall that pullback of smooth $k$-forms by a $W^{1,k+1}_{\loc}$-mapping $f:\R^n\supset U\ra \R^m$  commutes with the exterior derivative, i.e.
\begin{equation}
\label{eqn_sobolev_pullback_theorem_rn}
df^*\om=f^*d\om
\end{equation}   
for all $\om\in \Om^k(\R^m)$; here the exterior derivative on the left hand side is interpreted distributionally.  To our knowledge this was first observed by Reshetnyak, who used it in his approach to quasiconformal and quasiregular mappings \cite{reshetnyak_space_mappings_bounded_distortion,reshetnyak67-distorsion}; it has many other applications to geometric mapping theory in Euclidean space.  Now let $f:G\supset U\ra G'$ be a $W^{1,p}_{\loc}$ mapping where $G$, $G'$ are Carnot groups and $U$ is an open subset.   Since $f$ need not be a Sobolev mapping with respect to the Euclidean metrics  (\ref{eqn_sobolev_pullback_theorem_rn}) is not applicable.  On the other hand we may define the  {\bf Pansu pullback} of a smooth $k$-form  $\om\in \Om^k(U')$ to be  the $k$-form with measurable coefficients $f_P^*\om$ given by 
$$
f_P^*\om(p)=(D_Pf(p))^*\om(f(p))\,;
$$
here we are viewing the Pansu differential as a graded homomorphism $D_Pf(p):\fg\ra \fg'$, and identifying tangent spaces of $G$, $G'$ with their respective Lie algebras via left translation.
It turns out that unlike ordinary pullback, Pansu pullback does not commute with the exterior derivative, even for smooth contact diffeomorphisms (see Lemma~\ref{lem_pullback_exterior_derivative}).   Nonetheless, there is a partial generalization of  (\ref{eqn_sobolev_pullback_theorem_rn}) to Carnot groups. To state this, we will need the notion of the weight of a differential form; the weight $\wt(\al)$ of a differential form $\al$ is defined using its decomposition with respect to the diagonalizable action of the Carnot scaling on $\La^*\fg$, see Section~\ref{se:weights}.

\begin{theorem}[Pullback theorem]  \label{th:pull_back_intro} 
Let $G$, $G'$ be Carnot groups where $G$ has dimension $N$ and homogeneous dimension $\nu$.
Let $f:G\supset U\ra U'\subset G'$ be a $W^{1,p}_{\loc}$-mapping between open subsets, for some $p>\nu$.   Suppose $\om\in \Om^k(U')$ and  $\eta\in \Om^{N-k-1}_c(U)$ are smooth forms such that 

$$
\wt(\om)+\wt(d\eta)\leq -\nu\,,\quad \wt(d\om)+\wt(\eta)\leq -\nu\,,
$$    
then 
\begin{equation}
\label{eqn_pullback_wedge_deta}
\int_U(f_P^*\om)\we\eta+(-1)^k\int_Uf_P^*\om\we d\eta=0\,.
\end{equation}
In particular, if $\om\in\Om^k(U')$ and $\zeta\in \Om^{N-k-1}(U)$ are closed and $\wt(\om)+\wt(\zeta)\leq -\nu+1$, then the distributional exterior derivative $df_P^*\om$ satisfies
$$
\zeta\wedge df_P^*\om=0\,.
$$
\end{theorem}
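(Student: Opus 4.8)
The plan is to first dispose of the ``in particular'' clause, which follows formally from (\ref{eqn_pullback_wedge_deta}), and then to prove (\ref{eqn_pullback_wedge_deta}) itself by a weight count that reduces it to a Reshetnyak-type Stokes argument in the smooth case, followed by an approximation.

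\emph{The ``in particular''.} Suppose $\zeta\in\Om^{N-k-1}(U)$ is closed with $\wt(\om)+\wt(\zeta)\le-\nu+1$, and let $\phi\in C^\infty_c(U)$. By the definition of the distributional exterior derivative and $d\zeta=0$,
$$
\int_U\phi\,\zeta\wedge df_P^*\om=\pm\int_U f_P^*\om\wedge d(\phi\zeta),\qquad d(\phi\zeta)=d\phi\wedge\zeta .
$$
Since the differential of a function has weight $\le -1$, we get $\wt(d(\phi\zeta))\le\wt(\zeta)-1$, hence $\wt(\om)+\wt(d(\phi\zeta))\le\wt(\om)+\wt(\zeta)-1\le-\nu$; so (\ref{eqn_pullback_wedge_deta}) applies with $\eta:=\phi\zeta$ and the left-hand side vanishes for every $\phi$. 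Thus $\zeta\wedge df_P^*\om=0$ as a distribution, and it remains to prove (\ref{eqn_pullback_wedge_deta}).

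\emph{Weight count and reduction.} The key elementary fact is that the line $\La^N\fg^*$ of top forms is homogeneous of weight $-\nu$, so the wedge of a weight-$w$ $a$-form and a weight-$w'$ $(N-a)$-form is zero unless $w+w'=-\nu$. Because $D_Pf(x)$ is a graded homomorphism, Pansu pullback preserves weights, so $f_P^*\om$ carries exactly the weights occurring in $\om$; decomposing into weight-homogeneous pieces, $f_P^*\om\wedge d\eta=\sum_{w+w'=-\nu}(f_P^*\om)_w\wedge(d\eta)_{w'}$, and the bounds $w\le\wt(\om)$, $w'\le\wt(d\eta)$ combined with $\wt(\om)+\wt(d\eta)\le-\nu$ leave at most the term $w=\wt(\om)$, $w'=-\nu-\wt(\om)$ — and nothing at all when the inequality is strict, in which case the integrand vanishes identically. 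So (\ref{eqn_pullback_wedge_deta}) is equivalent to the statement for the leading weight components $\om_{\wt(\om)}$ and $(d\eta)_{-\nu-\wt(\om)}$. This is the crucial gain: $f_P^*\om_{\wt(\om)}$ is a polynomial in the horizontal differential $D_Hf$ of degree $-\wt(\om)<\nu<p$, hence lies in $L^{p/(-\wt(\om))}_{\loc}\subset L^1_{\loc}$ with a margin of integrability, and $f\mapsto f_P^*\om_{\wt(\om)}$ is continuous from $W^{1,p}_{\loc}$ into that space — the coefficients depending on $f$ converging locally uniformly by the Sobolev embedding $p>\nu$, the polynomial in $D_Hf$ converging in $L^{p/(-\wt(\om))}_{\loc}$.

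\emph{Smooth case and approximation.} For smooth $f$ — which, being a Sobolev map between Carnot groups, is a contact map — the ordinary pullback $f^*$ is filtration-preserving with associated graded $f_P^*$, so $f^*\om-f_P^*\om$ has only weights strictly below $\wt(\om)$; by the weight count, $(f^*\om-f_P^*\om)\wedge d\eta$ then has weight $<-\nu$ and vanishes pointwise. On the other hand $d(f^*\om)=f^*d\om=0$ since $\om$ is closed, and $\eta$ has compact support in $U$, so Stokes' theorem gives $\int_U f^*\om\wedge d\eta=0$; hence $\int_U f_P^*\om\wedge d\eta=0$ for smooth $f$. For general $f\in W^{1,p}_{\loc}$ I would choose smooth $f_j\to f$ in $W^{1,p}_{\loc}$, keep the identity $\int_U f_j^*\om\wedge d\eta=0$ (valid for any smooth map since $\om$ is closed), use the weight hypothesis to control which components of $f_j^*\om$ survive the wedge with $d\eta$, and show that $f_j^*\om\wedge d\eta-(f_j)_P^*\om\wedge d\eta\to0$ in $L^1_{\loc}$; together with the continuity established above, this gives (\ref{eqn_pullback_wedge_deta}) for $f$ in the limit.

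\emph{The main obstacle.} I expect the hard part to be exactly this last limit. The approximants $f_j$ cannot be taken contact — for rigid $G$ the smooth contact maps form only finite-dimensional families, so they are far from dense among Sobolev contact maps — and consequently the weight-$\wt(\om)$ part of $f_j^*\om$ differs from $(f_j)_P^*\om$ by ``contact defect'' terms, in which derivatives of $f_j$ appear that the $W^{1,p}$-norm of $f_j$ does not control; the defect factor itself tends to $0$, but the accompanying higher-order factors may blow up as $j\to\infty$. The task will be to show these terms occur only in combinations that cancel, or that disappear after wedging with $d\eta$ and integrating — which is precisely where the hypothesis $\wt(\om)+\wt(d\eta)\le-\nu$ and the explicit algebraic form of the contact relations must be used — and to do so with estimates uniform enough to pass to the limit, where the margin $p>\nu$ enters.
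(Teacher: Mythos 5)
Your proof has the right logical skeleton — reduce to the top-weight component, use that $\int f^*\om\wedge d\eta=0$ by Stokes for smooth approximants, and pass to the limit — but the proposal does not actually supply the crucial ingredient, and one of its intermediate claims is false.

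First, the conceptual error: you write that a smooth map between Carnot groups which is horizontally Sobolev ``is a contact map.'' This is not so. Smoothness (in the manifold sense) does not imply contactness, and indeed the smooth approximants used in the paper's proof are emphatically \emph{not} contact. The paper applies Stokes to an arbitrary smooth $f_\rho$ using only closedness of $\om$ (exactly as you correctly do in your second, non-contact version of Step~3), so the filtration-preserving picture for contact maps is not what drives the argument. More importantly, your ``smooth case'' does not help with the approximation step, since — as you yourself then observe — the contact maps are far too sparse (finite-dimensional for rigid $G$) to approximate a general Sobolev contact map, so your two paragraphs are in tension.

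Second, and this is the genuine gap, you do not construct the approximating sequence $f_j$, and you explicitly defer the hard part — showing $f_j^*\om\wedge d\eta-(f_j)_P^*\om\wedge d\eta\to 0$ — as ``the main obstacle.'' This deferred step is not a technicality: it is the content of the theorem. The paper proves precisely this as Theorem~\ref{thm_approximation_intro}/\ref{th:main_approximation}, and the whole point of Section~\ref{sec_center_of_mass_mollification} is that \emph{standard} Euclidean-style mollification (identifying $G'$ with its Lie algebra and averaging) does \emph{not} work: as explained in the introduction (with reference to Vodopyanov), it produces uncontrollable error terms, which is why the codegree-$1$ case required ad hoc arguments previously. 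The paper's new ingredient is a Buser--Karcher center-of-mass mollification that is equivariant under left translation and Carnot dilation. This equivariance gives the exact scaling identity
\[
(f_\rho^*\al\wedge\ga)(x)=\rho^{-(\nu+w_\al+w_\ga)}\bigl((\de_{\rho^{-1}}\circ f_x\circ\de_\rho)_1^*\al\wedge\ga\bigr)(e),
\]
(Lemma~\ref{lem_moll_calc}), which combined with $C^\infty_{\loc}$-stability of the center-of-mass smoothing and the definition of the Pansu derivative shows the limit is $f_P^*\al\wedge\ga$ when $w_\al+w_\ga=-\nu$, and $0$ when $w_\al+w_\ga<-\nu$. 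The weight hypothesis $\wt(\om)+\wt(d\eta)\le-\nu$ then enters not by having ``contact defect terms cancel'' but by guaranteeing the $\rho$-exponent is nonnegative, so the higher-weight ``defect'' pieces are suppressed rather than canceled. A maximal function argument (using $p>\nu$ via the Sobolev embedding, Lemma~\ref{lem_sobolev_embedding}) supplies the dominating $L^1$ function. None of this mechanism appears in your proposal, so the central step remains unproved. Your ``in particular'' clause and the weight-bookkeeping reduction at the start are fine, but they are the easy parts.
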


Theorem~\ref{th:pull_back_intro} is a consequence of an approximation theorem (see Theorem~\ref{thm_approximation_intro} below), which is the key technical result in this paper.   Theorem~\ref{thm_approximation_intro} has a number of other applications, which we will discuss elsewhere.  Dairbekov and Vodopyanov were the first (to our knowledge) to recognize the potential importance of a generalization of (\ref{eqn_sobolev_pullback_theorem_rn}) to mappings between Carnot groups, in particular for quasiregular mappings \cite{dairbekov_morphism_property_bounded_distortion,vodopyanov_bounded_distortion,vodopyanov_foundations}. In \cite{vodopyanov_foundations} it was shown that (\ref{eqn_sobolev_pullback_theorem_rn}) holds in the case  of   step 2 Carnot groups, for certain forms $\om$ of codegree $1$; this result also follows immediately from Theorem~\ref{thm_approximation_intro} for $W^{1,p}$-mappings with $p>\nu$.

\begin{remark}
\label{rem_stronger_version_exists}
In \cite{kmx_approximation_low_p} we prove a stronger version of Theorems~\ref{th:pull_back_intro} and \ref{thm_approximation_intro}, by relaxing the condition on the Sobolev exponent $p$, allowing $p\leq  \nu$; in particular it implies stronger versions of Theorems~\ref{thm_main_product_intro}, \ref{thm_qc_complex_heisenberg_holo_antiholo_intro}, and \ref{thm_nonrigid_intro}.  Although the basic outline of the proof is similar to that of Theorem~\ref{thm_approximation_intro},  the fact that $p\leq \nu$ creates several complications: a Sobolev mapping $f\in W^{1,p}_{\loc}(U,G')$ as in the theorem need not be either Pansu differentiable almost everywhere or continuous; in particular, the argument cannot be localized in the target.   
\end{remark}

\bigskip
\subsection*{Discussion of proofs}
We begin with Theorem~\ref{thm_main_product_intro}, and for simplicity we take  $U=G=G'=\H\times\H$; see Section~\ref{sec_rigidity_products} for the detailed proof.

Consider a $W^{1,p}_{\loc}$-mapping  $f:\H\times\H\ra \H\times \H$   for some $p>8$, such that the Pansu differential $D_Pf(x)$ is an isomorphism for a.e. $x\in \H\times \H$.  Since the Pansu differential $D_Pf(x):\H\times\H\ra \H\times \H$ is a graded isomorphism of Carnot groups, it is easy to see that $D_Pf(x)$ must respect the direct sum decomposition $\fg=\fh\oplus\fh$, i.e. there is a permutation  $\si_x:\{1,2\}\ra\{1,2\}$ such that 
\begin{equation}
\label{eqn_derivative_splits}
D_Pf(x)(x_1,x_2)=(h_1(x_{\si_x^{-1}(1)}),h_2(x_{\si_x^{-1}(2)}))
\end{equation} 
for some automorphisms $h_1,h_2:\fh\ra\fh$.  If the permutation $\si_x$ is independent of $x$, then one may readily integrate (\ref{eqn_derivative_splits}) to deduce that $f$ itself is a product as in (\ref{eqn_f_product}).  If $D_Pf(x)$ varied continuously with $x$, it would follow immediately that the permutation $\si_p$ is constant since it takes values in a discrete set; however $D_Pf(x)$ varies only measurably with $x$.  To appreciate the delicacy of this issue, consider the Lipschitz folding map $F: \R^2\ra\R^2$ defined by 
$$
F(x)=
\begin{cases}
x\quad \text{if}\quad x\in H_+\\
r_\ell(x)\quad\text{if}\quad x\in H_-\,,
\end{cases}
$$
where $H_\pm\subset \R^2$ are the two halfplanes bounded by the line $\ell:=\{(x,x)|x\in \R\}$, and $r_\ell:\R^2\ra\R^2$ is reflection across the line $\ell$.  The derivative $DF(x):\R^2\ra \R^2$ preserves the direct sum decomposition $\R^2=\R\oplus\R$ for a.e. $x\in \R^2$, but $F$ is not a product mapping.      We are able to exclude this kind of behavior in the permutation $\si_x$ from (\ref{eqn_derivative_splits}) using the Pullback Theorem and a short argument with differential forms.\footnote{Our first  proof of the product rigidity theorem used currents and the pullback theorem; see Subsection~\ref{subsec_currents_proof} for a brief sketch.  We thank Mario Bonk for  the simpler dual argument using forms, which he suggested  to us after hearing the original proof.} 

Although the details are different, the starting point of the proof  of    Theorem~\ref{thm_qc_complex_heisenberg_holo_antiholo_intro} 
   is also the exclusion of oscillatory behavior.  If $f:U\ra \H_n^\C$ is as in the theorem, then the Pansu differential $D_Pf(x):\fh_n^\C\ra \fh_n^\C$ is a graded automorphism for a.e. $x\in U$ by assumption, and is therefore either  
$\C$-linear or $\C$-antilinear  (see \cite{reimann_ricci}  or  Lemma 5.6 in \cite{KMX2}).  Using the pullback theorem one can show that the map $f$ has to make a global choice between these two possibilities.  The remainder of the proof is based on more standard PDE techniques, see Section~\ref{sec_complex_heisenberg_setup}.

We refer the reader  to \cite{KMX2} for discussion of the proof of Theorem~\ref{thm_nonrigid_intro}.    

We now turn to Theorem~\ref{th:pull_back_intro}.  

The proof of  (\ref{eqn_sobolev_pullback_theorem_rn}) in the Euclidean case follows from a straightforward mollification argument, so it is natural to try to use mollification to prove analogous results in the Carnot group setting.   The standard approach to mollifying  a map $f:G\ra G'$ would be as follows.  Let $\si_1$ be a symmetric compactly supported smooth probability measure on $G$.  For $\rho\in(0,\infty)$, $x\in G$, let  $\si_\rho:=(\de_\rho)_*\si_1$ be the pushforward of $\si_1$ under the Carnot dilation $\de_\rho$, and $\si_{x,\rho}:=(\ell_x)_*\si_\rho$ be the pushforward of $\si_\rho$ under left translation by $x$.  One  defines the mollified mapping $f_\rho:G\ra G'$ by letting $f_\rho(x)$ to be the average of the pushforward measure $f_*(\si_{x,\rho})$, where the average is defined by treating $G'$ as a linear space, for instance by identifying $G'$ with its Lie algebra via the exponential map.  Unfortunately, such a mollification introduces uncontrollable error terms, a fact which  was pointed out in \cite{vodopyanov_foundations} (however, see \cite{dairbekov_morphism_property_bounded_distortion} for a proof of some analytical results using standard mollification in the special case of the Heisenberg group).   Instead of using standard averaging  to define $f_\rho(x)$ we use a center of mass construction due to Buser-Karcher which is compatible with left translation and Carnot dilation \cite{karcher_buser_almost_flat_manifolds}.  This avoids both the uncontrollable error terms and the limitation to forms of codegree $1$ faced by the method of \cite{vodopyanov_foundations}. 
The main consequence is the following approximation theorem, which is the central technical result in this paper:
\begin{theorem}
\label{thm_approximation_intro}
Let $G$, $G'$ be Carnot groups where $G$ has dimension $N$ and homogeneous dimension $\nu$.
Let $f:G\supset U\ra U'\subset G'$ be a $W^{1,p}_{\loc}$-mapping between open subsets, for some $p>\nu$.  Then there is a family of smooth mappings $\{f_\rho:U_\rho\ra G'\}_{\rho>0}$ with the following properties:
\bit
\item $U_\rho\subset U$ is an open subset such that every compact subset $K\subset U$ is contained in $U_\rho$ when $\rho$ is sufficiently small;
\item $f_\rho\ra f$ locally uniformly;
\item If $\om\in \Om^k(U')$, $\eta\in\Om^{N-k}_c(U) $ are smooth differential forms whose weights satisfy $\wt(\om)+\wt(\eta)\leq -\nu$, then 
\begin{equation*}  
  f_\rho^*\om\wedge \eta \stackrel{L^1_{\loc}}{\lra} f_P^*\om\wedge \eta\,,
\end{equation*}
and, in particular
\begin{equation*}  
\int_Uf_P^*\om\wedge \eta=\lim_{\rho\ra 0}\int_Uf_\rho^*\om\wedge \eta\,.
\end{equation*}
\eit
\end{theorem}

Theorem~\ref{th:pull_back_intro} follows immediately from Theorem~\ref{thm_approximation_intro} by Stokes' theorem.  As discussed in Remark~\ref{rem_stronger_version_exists} there is  
 an extension of this result to $p\leq \nu$, see \cite{kmx_approximation_low_p}.

\subsection*{Organization of the paper}
Section \ref{sec_pullback_theorem} gives the statement of the Pullback Theorem and deduces some corollaries.  Section~\ref{sec_center_of_mass_mollification} discusses center of mass and mollification of maps into Carnot group; Section~\ref{sec_pansu_pullback_mollification} gives the proof of the Pullback Theorem.  Sections~\ref{sec_rigidity_products} and \ref{sec_complex_heisenberg_setup} deal with product structure and complexified groups, respectively.

\bigskip
{\bf Acknowledgements.}   We would like to thank Mario Bonk for suggesting a rigidity argument using differential forms, which simplified an earlier proof based on currents.   We would also like to thank Misha Kapovich for pointing out that the center of mass construction we were using had been discovered long ago by Buser-Karcher.

\section{Preliminaries}
\label{sec_prelim}
 In this section we collect some background material and fix notation and conventions.

\subsection{Carnot groups} 
\label{subsec_carnot_groups} 
We recommend the survey \cite{ledonne_primer_carnot_groups} for a discussion of the material here.

Without explicit mention  we 
will equip all Lie groups with Haar measure.  As is customary, we will conflate two standard definitions of the Lie algebra of a Lie group $G$ -- the tangent space at the identity $T_eG$ and the space of left invariant vector fields; likewise we will conflate a linear subspace of $\fg$ with the corresponding left invariant subbundle of the tangent bundle $TG$.  We will typically use the same notation for a Lie group homomorphism $G\ra G'$ and the induced homomorphism of Lie algebras $\fg\ra \fg'$.

\begin{definition}
A {\bf Carnot group} is a simply connected Lie group $G$ together with a decomposition of its Lie algebra $\fg$ into linear subspaces 
$$
\fg=V_1\oplus\ldots\oplus V_s
$$
such that:
\bit
\item $[V_i,V_j]\subset V_{i+j}$ for $1\leq i,j\leq s$.
\item The {\bf horizontal space (or first layer)} $V_1$ generates $\fg$.
\eit
A graded homomorphism between  Carnot groups $(G,\fg=\oplus_{j=1}^sV_j)$, $(G',\fg'=\oplus_{k=1}^{s'}V_k')$ is a Lie group homomorphism $G\ra G'$ such that the induced homomorphism of Lie algebras $\fg\ra\fg'$ carries $V_j$ into $V'_j$ for all $1\leq j\leq s$.   We will use $\aut(G)$ to denote the group of graded automorphisms of a Carnot group.
\end{definition}

Properties of a Carnot group $(G,\fg=\oplus_jV_j)$:
\bit
\item $G$ is nilpotent, and $V_{j+1}=[V_1,V_j]$ for all $1\leq j\leq s$.
\item  There is a $1$-parameter group of Carnot group automorphisms $\{\de_r:G\ra G\}$ such that $\de_r$ scales $V_j$ by the factor $r^j$.  We refer to these automorphisms as (Carnot) dilations.
\item Without explicit mention, we will assume that the first layer $V_1\subset \fg$ is equipped with an inner product, and extend this to a left invariant Riemannian metric on the left invariant subbundle of $TG$ determined by $V_1$.
\item The  Carnot-Caratheodory distance $d_{CC}$ between  $x_0,x_1\in G$ is the infimal length of a $C^1$-path $\ga:[0,1]\ra G$ with $\ga(i)=x_i$, $\ga'(t)\in V_1$ for all $t\in [0,1]$, and the length is taken with respect to the given Riemannian metric on $V_1$.
\item The topology induced by the metric $d_{CC}$ is the same as the topology coming from $G$ as a manifold.
\item $d_{CC}$ is invariant under left translation, and scaled by dilations: 
$$
d_{CC}(\de_r(x),\de_r(x'))=rd_{CC}(x,x')\,.
$$
\item The {\bf homogeneous dimension} of $G$ is $\nu:=\sum_jj\dim V_j$; this is the Hausdorff dimension of $(G,d_{CC})$, and there is a constant $C\neq 0$ such that for every $x\in G$, $r\in [0,\infty)$, the $\nu$-dimensional Hausdorff measure of the $r$-ball satisfies: 
$$
\h^\nu(B(x,r))=Cr^\nu\,.
$$
\eit

\bigskip
\begin{definition}
The {\bf $n^{th}$ real Heisenberg group $\H_n$} is the Carnot group with graded Lie algebra  $\fh_n=V_1\oplus V_2$, where $V_1=\R^{2n}$, $V_2=\R$, and the bracket $X,Y\in V_1$ is given by  $[X,Y]=-\om(X,Y)\in V_2$; here $\om=\sum_idx_{2i-1}\wedge dx_{2i}$ is the standard symplectic form on $\R^{2n}$.

The {\bf $n^{th}$ complex Heisenberg group $\H_n^\C$} is the complexification of $\H_n$; its graded Lie algebra is $\fh_n^\C:=\fh_n\otimes\C$; more concretely, it has the grading $\fh_n^\C=V_1^\C\oplus V_2^\C$, where $V_1^\C=V_1\otimes\C=\C^{2n}$, $V_2^\C=V_2\otimes\R=\C$, and for $X,Y\in V_1^\C$ the bracket is given by $[X,Y]=-\om^\C(X,Y)\in \C=V_2^\C$, and $\om^\C$ is the extension of $\om$ to a $\C$-bilinear form $V_1^\C\times V_1^\C\ra \C$.  
\end{definition}

\bigskip

\subsection{Sobolev mappings}
Starting from the mid-90s the theory of Sobolev spaces in metric measure spaces advanced rapidly, so that basic results were established in great generality.  However in this paper we only need results in the Carnot group case, where they were (almost all) already known, and in most cases technically simpler to establish.  We refer the reader to \cite{pansu,koranyi_reimann_foundations_quasiconformal_mappings_heisenberg_group,margulis_mostow_differential_quasiconformal_mapping,heinonen_lectures_analysis_metric_spaces,hajlasz_koskela_sobolev_met_poincare,vodopyanov_foundations}.

Let $G$, $G'$ be Carnot groups, where $G$ has homogeneous dimension $\nu$ and grading $\fg=\oplus_jV_j$.

\begin{definition}
\label{def_sobolev_space}
If $1\leq q\leq \infty$, $U\subset G$ is open, then $W^{1,q}_{\loc}(U)$ is the set of elements $u\in L^q_{\loc}(U)$ such that for every horizontal left invariant vector field $X\in V_1$,  distributional directional derivative $Xu$ belongs to $L^q_{\loc}(U)$. We let $|D_hu|\in L^q_{\loc}(U)$ be the pointwise supremum of the directional derivatives $Xu$, where $X\in V_1$ and $|X|=1$. 
\end{definition}
We remark that a variety of other definitions of Sobolev spaces have been developed (see for instance  \cite{hajlasz_sobolev_spaces,cheeger_differentiability,shanmugalingam_newtonian}) and they are all equivalent to Definition~\ref{def_sobolev_space} in the case of Carnot groups equipped with Haar measure.

\begin{lemma}
\label{lem_pi_sobolev} 
\cite[Theorem 1.11]{garofalo_nhieu_1996}
There is a  constant  $C_q<\infty$  with the following property. If $x\in G$, $r>0$, $u\in W^{1,q}(B(x,  r))$ with $q> \nu$
then  $u$ has a continuous representative, and it satisfies
\begin{equation}
\label{eqn_sobolev_inequality}
|u(y_1)-u(y_2)|  \leq   C_q r^{\frac{\nu}{s}}d(y_1,y_2)^{1-\frac{\nu}{q}}\left(\av_{B(x, r)}|D_hu|^q\,d\mu\right)^\frac{1}{q}
\end{equation}
for all $y_1,y_2\in B(x,r)$, where $\av$ denotes the average.
\end{lemma}

\begin{proof}
Indeed it follows  from  \cite[Theorem 1.11 and (3.19)]{garofalo_nhieu_1996}  that the assertion holds for $x=e$ and $r=1$. The general case
follows by left-translation and scaling. 
\end{proof}

\bigskip
\begin{definition}
\label{def_sobolev_mapping}
Suppose $1\leq q\leq \infty$ and $U\subset G$ is open.  
Then a measurable mapping $f:U\ra G'$ is in $W^{1,q}_{\loc}$ if there exists $g\in L^q_{\loc}(U)$ such that
 for every $z\in G'$, the composition $d_z\circ f:U\ra \R$ belongs to $W^{1,q}_{\loc}(U)$
  and its horizontal differential satisfies $|D_h(d_z\circ f)|\leq g$ almost everywhere.  
  Here $d_z:=d_{CC}(z,\cdot)$ denotes the distance function from $z$.
\end{definition}

\bigskip
\begin{lemma}
\label{lem_sobolev_embedding} Let $q > \nu$. 
There is a constant $C_q$ with the following property. 
If $f \in W^{1,q}(B(x,r); G')$ and if $g$ is as in Definition~\ref{def_sobolev_mapping}
then $f$ has a continuous representative and it satisfies
\begin{equation}  \label{eq:sobolev_estimate_Gprime}
d_{CC}(f(y),f(x))\le C_q \, 
 r\left(  \av_{B(x, r)}    |g|^q\,d\mu\right)^\frac{1}{q}\,.
\end{equation}
\end{lemma}

\begin{proof}
The main point is to show that $f$ has a continuous representative. 
The estimate \eqref{eq:sobolev_estimate_Gprime} then 
 follows from Lemma~\ref{lem_pi_sobolev} and 
Definition~\ref{def_sobolev_mapping} since $d_{CC}(f(y),f(x))=(d_{f(x)}\circ f)(y)$.

To see that $f$ has a continuous representative, let  $D$ be  a countable dense subset $D\subset G'$.
Then there exists   a null set $N\subset U$ such that \eqref{eqn_sobolev_inequality} holds for the composition $d_z\circ f$ for every $z \in D$
 and every $y_1,y_2\in B(x,r) \setminus N$.  In particular, the collection $\{d_z\circ f\}_{z\in D}$ is uniformly H\"older continuous on $B(x,r) \setminus N$. 
  Since $B(x,r) \setminus N$ is dense in $B(x,r)$ and $G'$ is complete, there
  exists a unique continuous extension $\bar f : B(x,r) \to G'$  of 
   $f\restr_{B\setminus N}$ which agrees with $f$ almost everywhere in $B$. 
\end{proof}

\bigskip
Let $U \subset G$ be open, let $q > \nu$ and let $f \in W^{1,q}_{\rm loc}(U;G')$. Then 
it follows from  Lemma~\ref{lem_sobolev_embedding} that $f$ has a continuous representative. To see this, write  $U$ as a countable union of open balls
$B_i$ such that their closure $\overline B_i$ are contained in $U$. By Lemma~\ref{lem_sobolev_embedding} there exist continuous functions $\overline{f}_i : B_i \to G'$
such that $\overline{f}_i = f$ a.e.\ in $B_i$. Since the functions  $\overline{f}_i$ are continuous it follows that $\overline f_i =\overline  f_j$ on $B_i \cap B_j$ if $B_i \cap B_j \neq \emptyset$. 
Thus there exists a continuous function $\overline  f: U \to G'$ with  $f\restr_{B_i} = \overline{f}_i$. In particular $\overline f = f$ almost everywhere.

\bigskip

Fix a mapping $f:G\supset U\ra G'$, where $U\subset G$ is open.   For all $x\in U$, $0<r<\infty$, define
$$
f_x=\ell_{f(x)^{-1}}\circ f\circ\ell_x\,,\qquad f_{x,r}=\de_{r^{-1}}\circ f_x\circ \de_{r}\,,
$$
where $\ell_h$, $\de_r$ denote left translation by $h$ and Carnot dilation, respectively. 

\begin{definition}
$f$ is {\bf Pansu differentiable} at $x\in U$ if there is a graded group homomorphism $\Phi:G\ra G'$ such that 
$$
f_{x,r}\lra \Phi
$$
uniformly on compact sets as $r\ra 0$.  We refer to $\Phi$, as well as the associated homomorphism of graded Lie algebras $\fg\ra\fg'$ as the {\bf Pansu differential} of $f$ at $x$.  We use $D_Pf(x)$ to denote both objects $D_Pf(x):G\ra G'$, $D_Pf(x):\fg\ra \fg'$.
\end{definition}

\begin{theorem}
\label{thm_pansu_differentiability} (\cite[Corollary 1]{vodopyanov_pansu_differentiability}; see also \cite{pansu,margulis_mostow_differential_quasiconformal_mapping}) 
If $f:U\ra G'$ is a $W^{1,q}_{\loc}$ mapping for some $q>\nu$, then $f$ is Pansu differentiable almost everywhere.
\end{theorem}

\noindent
Another proof of Pansu differentiability may be found in  \cite{kmx_approximation_low_p}.

\bigskip
\subsection{Quasiconformal and quasisymmetric mappings}
We let $G$, $G'$ denote Carnot groups as before, where $G$ has homogeneous dimension $\nu>1$.
 
\begin{definition}
Let $f:X\ra Y$  be a homeomorphism between metric spaces, and $\eta:[0,\infty)\ra [0,\infty)$ be a homeomorphism.     For $x\in X$, let
$$
H_f(x):=\limsup_{r\ra 0}\frac{\sup\{d(f(x),f(y))\;|\;d(x,y)\leq r\}}{\inf\{d(f(x),f(y))\;|\; d(x,y)\geq r\}}\,,
$$
and $H_f:=\sup_xH_f(x)$. 
Then $f$  is   {\bf quasiconformal} if $H_f<\infty$, and $f$ is  {\bf $\eta$-quasisymmetric} if for any distinct points $p,x,y\in X$, 
$$
\frac{d(f(p),f(x))}{d(f(p),f(y))}\leq \eta\left(\frac{d(p,x)}{d(p,y)}  \right)\,.
$$
\end{definition}

\bigskip
If $f$ is an $\eta$-quasisymmetric homeomorphism then its inverse is an $\tilde \eta$-quasisymmetric homeomorphism 
with $\tilde \eta(t) = \frac{1}{ \eta^{-1}(t^{-1})}$, and   if  $f$ is an $\eta_f$ quasisymmetric homeomorphism and $g$ is an $\eta_g$ quasisymmetric homeomorphism
then $f \circ g$ is an $\eta_f \circ \eta_g$ quasisymmetric homeomorphism, see \cite[Proposition 10.6]{heinonen_lectures_analysis_metric_spaces}.

We will need the following properties of quasiconformal and quasisymmetric homeomorphisms:
\begin{theorem}[See \cite{heinonen_koskela_definitions_quasiconformality,heinonen_koskela,heinonen_lectures_analysis_metric_spaces}]
\label{thm_properties_qc_homeomorphisms}
Let $f:G\supset U\ra U'\subset G'$ be a homeomorphism.  Then
\ben
\item If $f$ is quasiconformal, then it is locally $\eta$-quasisymmetric for some $\eta=\eta(H_f)$, i.e. for every $x\in U$, there is an $r=r(x)>0$ such that $f\restr_{B(x,r)}:B(x,r)\ra f(B(x,r))$ is an $\eta$-quasisymmetric homeomorphism.
\item Every global quasiconformal homeomorphism $f: G\ra G'$ is $\eta=\eta(H_f)$-quasisymmetric.
\item If $f$ is quasiconformal, then $f\in W^{1,p}_{\loc}$ for some $p=p(H_f)>\nu$.
\item If $f$ is quasiconformal, then  $D_Pf(x)$ is an isomorphism for a.e. $x$, and the sign of the determinant of $D_Pf(x)$ is locally constant, and agrees with the local degree of $f$.
\een
\end{theorem}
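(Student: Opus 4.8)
The plan is to deduce all four assertions from the general theory of quasiconformal maps in metric measure spaces, specialized via the fact that every Carnot group $G$, equipped with $d_{CC}$ and Haar measure, is an Ahlfors $\nu$-regular, geodesic, proper metric measure space supporting a $(1,1)$-Poincar\'e inequality --- hence a Loewner space in the sense of Heinonen--Koskela. Granting this, parts (1) and (2) are precisely the equivalence, in Loewner spaces, between the metric definition of quasiconformality (finiteness of $H_f$) and quasisymmetry: a bound on $H_f$ controls the distortion of moduli of ring curve families, which through the Loewner property upgrades to an $\eta$-quasisymmetry estimate with $\eta$ depending only on the structural constants of $G$ and on $H_f$ --- locally for a homeomorphism between open subsets, and globally when $f\colon G\to G'$; see \cite{heinonen_koskela_definitions_quasiconformality,heinonen_koskela}.

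For part (3), the first step is to show $f\in W^{1,\nu}_{\loc}$. Using the local quasisymmetry from (1), one checks that for a.e.\ $x$ the function $\rho(x):=\limsup_{r\to 0}L_f(x,r)/r$, where $L_f(x,r):=\sup\{d_{CC}(f(x),f(y)):d_{CC}(x,y)\le r\}$, is a weak upper gradient of $f$, and that $\rho$ is $\nu$-integrable on compacta because $\rho^\nu$ is dominated (up to a constant depending on $H_f$) by the volume derivative $\limsup_r \h^\nu(f(B(x,r)))/\h^\nu(B(x,r))$, which lies in $L^1_{\loc}$ since $f$ is injective. The improvement to some $p=p(H_f)>\nu$ then follows from a reverse H\"older inequality for $|D_hf|$ and Gehring's lemma, exactly as in the Euclidean case; all of this is carried out in \cite{heinonen_koskela} and, in the Carnot-group setting, in the references listed in Section~\ref{sec_prelim}.

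For part (4): by (3) and Theorem~\ref{thm_pansu_differentiability}, $f$ is Pansu differentiable a.e.; the quasiconformal distortion inequalities together with Lusin's condition $(N)$ for $f$ and $f^{-1}$ --- so that the area formula $\int_E|\det D_Pf|\,d\mu=\h^\nu(f(E))$ holds --- force $D_Pf(x)$ to be a graded isomorphism for a.e.\ $x$. To identify the sign, fix such an $x$ and a closed ball $V$: the rescalings $f_{x,r}$ converge uniformly on $V$ to the linear isomorphism $D_Pf(x)$, which is nonvanishing on $\partial V$, so for small $r$ the Brouwer degree satisfies $\deg(f_{x,r},V,0)=\deg(D_Pf(x),V,0)=\operatorname{sign}(\det D_Pf(x))$; by left-translation and dilation invariance this equals the local degree of $f$ at $x$. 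Since $f$ is a homeomorphism, its local degree is locally constant in $x$, equal to $\pm 1$ on each component of $U$, so $\operatorname{sign}(\det D_Pf)$ is a.e.\ locally constant and agrees with the local degree of $f$. I expect the main obstacle to be the self-improvement of integrability in (3) --- the reverse H\"older inequality and the use of Gehring's lemma --- together with, in (4), the passage from the measure-theoretic Pansu differential to the topological local degree, which relies on the condition $(N)$ property of quasiconformal homeomorphisms of Carnot groups and on the stability of the Brouwer degree under the uniform convergence $f_{x,r}\to D_Pf(x)$.
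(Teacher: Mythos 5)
Your proposal follows essentially the same route as the paper. Parts (1) and (2) are cited in the same way — the metric-to-quasisymmetric upgrade in Loewner spaces via Heinonen–Koskela — and part (3) is the same appeal to reverse H\"older and Gehring's lemma from \cite{heinonen_koskela} (the paper also notes that one may alternatively use the differentiability and absolute continuity results of Pansu or Margulis–Mostow together with Gehring's lemma). For part (4) your argument is the same degree-theoretic one the paper gives: the rescalings $f_{x,r}$ are homeomorphisms converging uniformly on compacta to $D_Pf(x)$, so the local degree of $D_Pf(x)$ near $e$ is well-defined and equals the local degree of $f$ at $x$, hence $\operatorname{sign}\det D_Pf(x)$ coincides with the local degree and inherits local constancy from $f$. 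The only additional content you supply is the explicit justification (via Lusin's condition $(N)$ and the area formula) that $D_Pf(x)$ is a graded isomorphism a.e.; the paper records this as a known fact in the preliminaries rather than proving it inside Theorem~\ref{thm_properties_qc_homeomorphisms}, but the two accounts agree in substance.
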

\begin{proof}
For (1) and (2) see Remark 1.8(b) and Theorem 1.7 in \cite{heinonen_koskela_definitions_quasiconformality}.

(3).  This is based on Gehring's lemma, and is treated in Section 7 of \cite{heinonen_koskela}.  However, in the Carnot group case, one may     use
   the differentiability and absolute continuity results from \cite{pansu} or  \cite{margulis_mostow_differential_quasiconformal_mapping} and Gehring's lemma instead.

(4).  Since $f$ is a homeomorphism, so is $f_{x,r}$ for every $r\in(0,\infty)$.  If $x$ is a point of Pansu differentiability, then $f_{x,r}\ra D_Pf(x)$ uniformly on compact sets, and therefore its local degree near $e\in G$ is constant for small $r$, and coincide with the local degree of $D_Pf(x)$, which is the sign of the determinant of $D_Pf(x):\fg\ra\fg'$.  This implies the local constancy of the sign of $\det(D_Pf(x))$.

\end{proof}

\bigskip

\bigskip
\section{Differential forms on Carnot groups}
\label{se:weights}
The purpose of this section is to discuss two key notions for the de Rham complex on Carnot groups: the weight of  a differential form (which reflects the action of the dilation map $\delta_r$) and the Pansu pullback; this sets the stage for the Pullback Theorem in the next section, which  asserts that the distributional exterior derivative commutes with Pansu pullback, but only if the form which is pulled back and the test form satisfy certain restriction on their weights.

Let $G$ be a Carnot group of step $s$ with Lie algebra $\fg$.  
  We retain the notation from before: 
\begin{itemize}
\item  
$N$ and $\nu$ are the dimension and homogeneous dimension  of $G$ respectively. 
\item $\de_r:G\ra G$ is dilation by $r$; we use the same notation for the dilation on $\fg$.
\item Let $\{X_i\}_{1\leq i \leq N}$ be a graded basis 
of $\fg$, i.e., each $X_i$ lies in one of the layers $V_1, \ldots, V_s$, and let $\{\th_i\}_{1\leq i\leq N}$ be the dual graded basis of $\fg^*=\Lambda^1 \fg$.
\item If $I=\{i_1,\ldots,i_k\}\subset \{1,\ldots,N\}$, then $\th_I:=\th_{i_1}\wedge\ldots\wedge \th_{i_k}$.
\end{itemize}
As usual, we will identify $X_i$ and $\theta_I$ with the corresponding left invariant vector fields and forms on $G$ when convenient.

We first define weights on  the graded algebras of  multi-vectors $\Lambda_* \fg = \oplus_{k=1}^N \Lambda_k \fg$ and alternating forms
$\Lambda^* \fg = \oplus_{k=1}^N \Lambda^k \fg$. 
\begin{definition}
A  $k$-form $\al  \in \Lambda^k \fg$ or  
$k$-vector  $\xi\in \Lambda_k(\fg)$ is {\bf homogeneous with weight $w$} if 
$(\de_r)_*\al=r^w\,\al$ or $(\de_r)_*\xi=r^w\,\xi$, respectively, for all
$r\in(0,\infty)$. 
\end{definition} 
Note that the weight of a non-vanishing homogeneous $k$-vector or $k$-form is uniquely defined. We denote it by  $\wt(\xi)$ and  $\wt(\alpha)$.
Note also  that the zero form and the zero vector are homogeneous with any weight. Hence they do not have a well-defined weight. 

\begin{remark}
Most of the facts about weights and homogeneity discussed below follow the behavior of diagonalizable actions under dualizing, tensor products, and contraction.  However, we give a calculation based treatment.
\end{remark}
  If we denote the elements of a homogeneous basis of $\fg = \oplus_{i=1}^s V_i$  by $X_{i,j}$ with $X_{i,j} \in V_i$, $1 \le i \le s$ and $ 1 \le j \le \dim V_i$,
then $\wt(X_{i,j}) = i$. If $\theta_{i,j}$ denotes the dual basis of one forms then $\wt(\theta_{i,j}) = -i$. 
By direct inspection we see that  for a multiindex $I = ((i_1, j_1), \ldots, (i_k, j_k))$ with $(i_\ell, j_\ell) \ne (i_m, j_m)$ for $\ell \ne m$ we have
\begin{equation}  \label{eq:weights_basis_fg}
\wt(\theta_I) = - \sum_{\ell =1}^k i_\ell, \quad  \wt(X_I) =  \sum_{\ell =1}^k i_\ell.
\end{equation}
It follows from  \eqref{eq:weights_basis_fg} that
every $k$-form $\alpha \in \Lambda^k(\fg)$ has a canonical decomposition
$$
\alpha=\sum_{-\nu\leq j\leq -k} \alpha_j
$$
where $\alpha_j$  is  zero or  a homogeneous $k$-form of weight $j$.
 Likewise every $k$-vector $X$ has a decomposition
$$
X = \sum_{k\leq j\leq \nu}X_j
$$
where $X_j$ is homogeneous of weight $j$ or zero.   By definition, the weight of a $k$-form or $k$-vector 
is the maximal weight of its homogeneous nonvanishing summands.

\begin{lemma}
\label{lem_weight_facts}

\mbox{}
\ben
\item   \label{it:weight_facts_vectorspace}    The sets
$$\{ \alpha \in \Lambda^k\fg : \alpha \ne 0, \wt(\alpha) \le w  \} \cup \{0\}$$
and 
 $$\{X \in \Lambda_k\fg : \xi \ne 0, \wt(X) \le w\} \cup \{0\}$$
are vector spaces.

\item  \label{it:weight_facts_products}    If $\al, \beta \in \Lambda^*\fg$ and $X, Y \in\Lambda_*\fg$ are
homogeneous, then so are
$\al  \wedge\beta$, $X\wedge Y$, and the interior product $i_{X} \alpha$.
Moreover 
\begin{align*}
\wt(\al\wedge\beta)=&\wt(\al)+\wt(\beta)\\
\wt(X \wedge Y)=&\wt(X )+\wt(Y)\\
 \wt(i_{X} \al) =& \wt(\al) +  \wt(X) 
\end{align*}
as long as the form or vector on the left hand side does not vanish. 
\item  \label{it:weight_facts_pullback}  If $\gamma \in \Lambda^k\fg'$ is homogeneous and
$\Phi:\fg \ra \fg'$ is a graded algebra homomorphism, then
$\Phi^*\gamma  $ is homogeneous with
$\wt(\Phi^*\ga)=\wt(\ga)$ as long as $\Phi^*\ga \ne 0$. 
\item If $\Phi:\fg\ra \fg'$ is a graded homomorphism, $\al,\be\in \La^*\fg$, $\ga\in \La^*\fg'$,
\begin{align}
\wt(\al\wedge\be)\le &\wt(\al)+\wt(\beta)  
      \label{eq:subadditivity_wedge_weight} \\
      \wt(d\alpha) \le &\wt(\alpha)  \quad \forall \al \in \Lambda^*\fg,\\
 \wt(\Phi^* \gamma) \le &   \wt(\gamma)  \quad \hbox{$\forall \gamma \in \Lambda^* \fg'$} 
       \label{eq:pullback_nonhomogeneous}
\end{align}
whenever the forms on the left hand side do not vanish. 
Moreover if $\Phi$ is an isomorphism and $\ga\neq 0$ then
\begin{equation}
 \label{eq:pullback_nonhomogeneous_iso}
\begin{aligned}
\quad  \wt(\Phi^* \gamma) =  \wt(\gamma). \\ 
\end{aligned}
\end{equation}
  \item    \label{it:weight_facts_vanishing}   If $\al \in \Lambda^k \fg$ then
\begin{align*}
\alpha \ne 0 \quad \Longrightarrow  \quad  \wt(\alpha) \ge -k + N - \nu.
\end{align*}

\item \label{item_wrong_weights_pair_zero}
If $\al\in\Om^k(G)$, $\xi\in \Om_k(G)$ are homogeneous and
$\wt(\al)+\wt(\xi)\neq 0$ then $\al(\xi)\equiv 0$.
\item  \label{it:weight_facts_dalpha} 
 If $\alpha \in \Lambda^*\fg$ is homogeneous  and $d\alpha \ne 0$ then
\begin{align*}
\wt(d\alpha) = \wt(\alpha)
\end{align*}
\een
\end{lemma}

\bigskip
In  assertion \eqref{it:weight_facts_dalpha} of the lemma, the action of the exterior derivative on $\Lambda^*\fg$ is defined by extending the elements of $\Lambda^*\fg$ to
left invariant forms, 
\begin{equation}  \label{eq:exterior_derivative_on_algebra}
\begin{aligned}
d\alpha&(X_0, \ldots, X_k) \\
&= \sum_{0 \le i < j \le k }  (-1)^{i+j} \alpha([X_i, X_j], X_0, \ldots, \widehat X_i, \ldots, \widehat X_j, \ldots, X_k).
\end{aligned}
\end{equation}
 See \cite[Lemma 14.14]{Michor}.

 \bigskip

\begin{proof} 
Assertion  \eqref{it:weight_facts_vectorspace} follows directly from the definition of the weight. 
The first two identities in   \eqref{it:weight_facts_products}   
 also follow immediately from the definition and the fact that pullback und pushforward
under $\delta_r$ commute with the wedge product. 
The third identity follows from a short calculation using $(\delta_r)_* i_X \alpha(Z) = \delta_{r^{-1}}^* i_X \alpha(Z) = (i_X \alpha) ( (\delta_{r^{-1}})_* Z)$. 

 To show  \eqref{it:weight_facts_pullback} we use that  $(\delta_r)_* = \delta_{r^{-1}}^*$ and  that graded algebra homomorphisms commute with the dilation map.

Assertion (4) follows from assertions (2) and (3) and the definition of the weight.

Assertion \eqref{it:weight_facts_vanishing} is clear for $k=N$ since the volume form has weight $-\nu$. 
Let $k < N$ and assume that $\alpha \in \Lambda^k \fg$ with $\alpha \ne 0$ and $\wt(\alpha) < -\nu + N -k$. 
 Every one-form has weight $\le -1$. Thus by  \eqref{eq:subadditivity_wedge_weight} 
 we get $\wt(\alpha \wedge \beta) < -\nu$ for every $k$-form $\beta$. Hence $\alpha \wedge \beta = 0$ for all $k$-forms $\beta$. 
 This implies $\alpha = 0$ and yields a contradiction.

Assertion  (\ref{item_wrong_weights_pair_zero}) follows from the fact that pullback commutes with interior product.
 
 Finally \eqref{it:weight_facts_dalpha} follows from  \eqref{eq:exterior_derivative_on_algebra} 
 as well as the identities $(\delta_r)_* = \delta_{r^{-1}}^*$ and $(\delta_{r^{-1}})_*[X_i, X_j] = [(\delta_{r^{-1}})_*X_i,
 (\delta_{r^{-1}})_*X_j]$.

\end{proof}

\medskip

We now consider differential forms $\alpha$ defined on an open subset $U$ of a Carnot group $G$. 
We denote the differential forms on $U$ by $\Omega^*(U)$ and the
   compactly supported forms by $\Omega^*_c(U)$.

\bigskip

Identifying the   left invariant differential forms with $\Lambda^*\fg$ we can write a $k$-form
$\alpha \in \Omega^k(U)$  as
\begin{align*}
\alpha = \sum_{I}  a_I \theta_I  \quad \hbox{where} \quad a_I: U \to \R
\end{align*}
 where the sum runs over ordered multiindices of length $k$. The functions $a_I$ are determined uniquely. 
We say that $\alpha$ is a  measurable (or distributional,  continuous, smooth,  \ldots) form if the coefficient functions  $a_I$ 
are measurable (or distributions,  continuous, smooth, \ldots). We say that $\alpha$ vanishes a.e.\ if all functions $a_I$ vanish a.e.
For fixed $x \in U$ we can view $\alpha(x) = \sum_I a_I(x) \theta_I$ as an element of $\Lambda^k \fg$ and we define its weight as above.

\begin{definition} If $\alpha$ is measurable and does not vanish a.e.\ in $U$ we define
\begin{equation}
\wt(\alpha) := \esssup_{x \in U: \alpha(x) \ne 0} \wt(\alpha(x)).
\end{equation}
If $E \subset U$ is measurable and the restriction of $\alpha$ to $E$ does not vanish a.e.\
 then we define
\begin{equation}
\wt(\alpha_{|E} ) := \esssup_{x \in E: \alpha(x) \ne 0} \wt(\alpha(x)).
\end{equation}
\end{definition}

It follows from    \eqref{eq:subadditivity_wedge_weight} that
\begin{equation}
\wt(\alpha \wedge \beta) \le \wt(\alpha) + \wt(\beta)  \quad \forall \alpha, \beta \in \Omega^*(U)
\end{equation}
whenever $\alpha \wedge \beta$ does not vanish a.e.

\medskip

We now come to a key object in our analysis, the Pansu pullback of a differential form under map which is a.e.\  Pansu differentiable.
Let  $G$ and $G'$ be Carnot groups, where $G$ has dimension $N$ and homogeneous dimension $\nu$.  
  Suppose $f:G\supset U\ra U'\subset G'$ is   a $W^{1,p}_{\loc}$  mapping between open sets for some $p>\nu$.
 Then $f$ is Pansu differentiable at almost every $x \in U$.  The Pansu differential  
 $D_Pf(x)$ can be viewed as  a graded homomorphism from $G$ to $G'$  or from $\fg$ to $\fg'$ (here we identify a graded group homomorphism with its
 tangent map). 
 \begin{definition}   Let $\alpha$ be a continuous   differential form on $U'$. The Pansu pullback $f_P^*\alpha$ is defined by
 \begin{equation}
 (f_P^*\alpha)(x) = \sum_{I} (a_I \circ f)(x)   \, \,  (D_Pf(x))^* \theta_I.
 \end{equation}
 \end{definition}

 \bigskip
 
 \bigskip

\bigskip

\section{The Pullback Theorem}
\label{sec_pullback_theorem}
In this section we present the key technical result in this paper --  the Pullback Theorem -- and deduce several corollaries.   

We begin by pointing out that in general, Pansu pullback does not commute with the exterior derivative, even for smooth contact diffeomorphisms.

\begin{lemma} 
\label{lem_pullback_exterior_derivative} 
Let $f:\H\supset U\ra U'\subset \H$ be a smooth contact diffeomorphism between open subsets of the Heisenberg group. If for some $k\in \{0,1,2\}$ the Pansu pullback $f_P^*:\Om^k(U')\ra \Om^k(U)$ commutes with the exterior derivative, then $f$ preserves the foliation by cosets of the center, i.e. $Df(V_2)=V_2$. 
\end{lemma}
\begin{proof}
We will show that for every $u\in \Om^0(U')$ such that 
$X_3u= 0$ we have $X_3(u\circ f)=0$.  This implies $Df(V_2)=V_2$.

Take $k=0$.   Then since $f_P^*u:=u\circ f$, by assumption 
$$
X_3(u\circ f)=(d(u\circ f))(X_3)=(f_P^*du)(X_3)=0
$$
because $D_Pf(x)(X_3)\in V_2$ since $D_Pf(x)$ is a graded homomorphism.  

Now suppose $k=1$.   Let $\om:=du$.  Then $d\om=0$, so by assumption we have $df_P^*\om=0$.  Hence $f_P^*\om=d\hat u$ for some $\hat u\in\Om^0(U)$ with $X_3\hat u=0$.  We have $d\hat u\restr_{V_1}=d(u\circ f)\restr_{V_1}$ because Pansu pullback agrees with ordinary pullback on $V_1$.  It follows that $d(\hat u-u\circ f)\restr_{V_1}=0$, which implies that $\hat u-u\circ f$ is constant.  Therefore $X_3(u\circ f)=X_3\hat u=0$. 

Take $k=2$.  Let $\om:=u\,\th_1\we\th_2$, so $d\om=(X_3u)\th_1\we\th_2\we\th_3=0$.  Then  $f_P^*\om=(u\circ f)\,\al \,\th_1\we\th_2$, where $\al:=\det[D_Pf\restr_{V_1}]$ (which is nowhere zero since $f$ is a diffeomorphism), so by assumption we have
$$
0=df_P^*\om=[X_3((u\circ f)\al)]\,\th_1\we\th_2\we\th_3\,
$$
and therefore $(X_3\al)\cdot (u\circ f)+\al\,(X_3(u\circ f))=0$.  This also holds if we replace $u$ by $u+c$ for any $c\in \R$, forcing $X_3(u\circ f)=0$. 
\end{proof}

\bigskip

\begin{theorem}[Pullback theorem]  \label{th:pull_back} 
Let $G$ and $G'$ be Carnot groups, where $G$ has dimension $N$ and homogeneous dimension $\nu$, and suppose $f:G\supset U\ra U'\subset G'$ is a $W^{1,p}_{\loc}$  mapping between open sets for some $p>\nu$.
  Suppose  $\om\in \Om^k(U')$ is a continuous form with continuous distributional exterior derivative $d\om$, suppose $\eta\in \Om^{N-k-1}_c(U)$ is smooth, and 
\begin{equation}
\label{eqn_om_eta_weight_condition}
\wt(\om)+\wt(d\eta)\leq -\nu\,,\quad \wt(d\om)+\wt(\eta)\leq -\nu\,.
\end{equation}
Then
\begin{equation}  \label{eq:pullback_theorem}
\int_U(f_P^*d\om)\we\eta+(-1)^k\int_Uf_P^*\om\we d\eta=0\,.
\end{equation}
\end{theorem}

If one of the forms in  \eqref{eqn_om_eta_weight_condition} vanishes, then we interpret its weight as $-\infty$, so that the corresponding inequality holds.

Note that by the Sobolev embedding theorem for Carnot groups, a $W^{1,p}_{\loc}$ mapping is continuous when $p>\nu$, see Lemma~\ref{lem_sobolev_embedding}.

We will prove Theorem~\ref{th:pull_back} by a new approximation of $f$ by smooth maps $f_\rho$ which respects the structure of the Carnot group. 
The approximation is introduced in Section~\ref{sec_center_of_mass_mollification} and the  proof of Theorem~\ref{th:pull_back}
is carried out in Section~\ref{sec_pansu_pullback_mollification}.

 We will usually use the following special case of  Theorem~\ref{th:pull_back}:
\begin{theorem}[Pullback Theorem (special case)]  \label{co:pull_back2}
Let $G,G',U,U'$ and $f$ be as in Theorem~\ref{th:pull_back}.
Suppose $\varphi \in C_c^\infty(U)$ and  that $\alpha$ and $\beta$ are closed left invariant forms
which satisfy 
\begin{equation} \label{eq:weight_special}
\deg \alpha + \deg \beta = N -1 \quad \hbox{and} \quad  \wt(\alpha) + \wt(\beta) \le -\nu + 1.
\end{equation}
Then 
\begin{equation} \label{eq:pull_back_identity_special}
 \int_U f_P^*(\alpha) \wedge d(\varphi \beta) = 0.
\end{equation}
\end{theorem}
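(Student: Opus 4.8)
The plan is to deduce this special case directly from the general Pullback Theorem~\ref{th:pull_back} by setting $\eta := \varphi\beta$ and checking that the hypotheses transfer. First, since $\alpha$ is a closed left-invariant form, $f_P^*\alpha$ makes sense as a measurable form on $U$ (the coefficients are measurable because $D_Pf$ is defined a.e.\ and measurable), and $\om := \alpha$ is continuous and closed, so $\wt(\om) = \wt(\alpha)$ in the sense of Lemma~\ref{lem_weight_facts} applied pointwise. Next, $\eta = \varphi\beta$ is a smooth compactly supported form of degree $\deg\beta = N - 1 - \deg\alpha = N - k - 1$, where $k = \deg\alpha$, matching the degree requirement $\eta\in\Om^{N-k-1}_c(U)$ in Theorem~\ref{th:pull_back}.

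The one genuine computation is to bound $\wt(d\eta)$. Since $\beta$ is left-invariant and closed, $d\beta = 0$, so by the Leibniz rule $d(\varphi\beta) = d\varphi\wedge\beta$. Now $d\varphi$ is a $1$-form whose coefficients are smooth functions, and as a section of $\Lambda^1\fg$ every $1$-form has weight at most $-1$ by \eqref{eq:weights_basis_fg} (the first layer $V_1$ contributes weight $-1$ and higher layers contribute more negative weights). Hence, using the subadditivity of weight under wedge products \eqref{eq:subadditivity_wedge_weight} (equivalently the displayed inequality $\wt(\alpha\wedge\beta)\le\wt(\alpha)+\wt(\beta)$ for forms on $U$),
$$
\wt(d\eta) = \wt(d\varphi\wedge\beta) \le \wt(d\varphi) + \wt(\beta) \le -1 + \wt(\beta)\,.
$$
Combining with the hypothesis $\wt(\alpha) + \wt(\beta)\le -\nu+1$ gives
$$
\wt(\om) + \wt(d\eta) = \wt(\alpha) + \wt(d\eta) \le \wt(\alpha) + \wt(\beta) - 1 \le (-\nu+1) - 1 = -\nu\,,
$$
which is precisely the weight hypothesis of Theorem~\ref{th:pull_back}. (If $d\eta$ vanishes a.e.\ the inequality $\int_U f_P^*\alpha\wedge d\eta = 0$ is trivial, so we may assume $d\eta\not\equiv 0$ and the weight is well-defined.)

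Applying Theorem~\ref{th:pull_back} with this choice of $\om$ and $\eta$ then yields $\int_U f_P^*(\alpha)\wedge d(\varphi\beta) = 0$, which is \eqref{eq:pull_back_identity_special}. The only subtlety worth flagging — and the step I would expect to need the most care — is the passage from the weight of a left-invariant form as an element of $\Lambda^*\fg$ to the weight of the (non-left-invariant) form $d\varphi\wedge\beta$ on $U$: one must invoke the pointwise definition $\wt(d\eta) = \esssup_{x}\wt((d\eta)(x))$ and the fact that at each point $(d\varphi)(x)\in\Lambda^1\fg$ still has weight $\le -1$, regardless of the values of the coefficients of $d\varphi$. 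This is immediate from \eqref{eq:weights_basis_fg}, since scaling by nonzero scalars does not change the weight of a homogeneous form, and a general $1$-form is a sum of homogeneous $1$-forms each of weight $\le -1$. Everything else is bookkeeping.
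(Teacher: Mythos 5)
Your proof is correct and follows exactly the paper's own derivation: set $\eta = \varphi\beta$, observe $d\eta = d\varphi\wedge\beta$ since $\beta$ is closed, bound $\wt(d\eta)\le\wt(\beta)-1$ using that a $1$-form has weight $\le -1$, and apply Theorem~\ref{th:pull_back}. The extra detail you give on the pointwise weight bookkeeping is sound and simply makes explicit what the paper leaves implicit.
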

\begin{proof}
Letting $\eta = \varphi \beta$, we have $d\eta = d \varphi \wedge \beta$.  Thus
 $\wt(d\eta) \le \wt(\beta) - 1$. Hence the assertion follows from 
Theorem~\ref{th:pull_back}.
\end{proof}

\bigskip\bigskip
For a closed form  $\beta$ we have $d(\varphi \beta) = d\varphi \wedge \beta$. 
Thus the identity \eqref{eq:pull_back_identity_special} implies that the codegree $1$ form  $f_P^*(\alpha) \wedge \beta$ is distributionally closed, i.e. its  distributional exterior derivative
vanishes.   Letting $\al$ and $\be$ vary, we obtain conditions on the distributional   derivatives of certain wedge powers of the Pansu differential $D_Pf(x)$.  We will study these conditions more systematically in a subsequent paper;    our focus here will be on making judicious choices of forms to deduce rigidity for mappings.

Interestingly, Theorem~\ref{co:pull_back2} yields no information on derivatives of the Pansu differential if the form $\alpha$ or $\beta$ is exact.
More precisely we have the following result:
\begin{lemma}  \label{le:trivial_pull_back}   Assume that $f: U \subset G \to G'$ is Pansu differentiable a.e. in $U$.
Let $\alpha$ and $\beta$ be closed left invariant forms
which satisfy 
\begin{equation} \label{eq:weight_special2}
\deg \alpha + \deg \beta = N -1 \quad \hbox{and} \quad  \wt(\alpha) + \wt(\beta) \le -\nu + 1.
\end{equation}
  and assume
 that there exists a left invariant  form $\gamma$ such that 
$\alpha = d \gamma$ (respectively $\beta = d \gamma$). Then
\begin{equation} \label{eq:trivial_pull_back}
f_P^*(\alpha) \wedge \beta = 0 \quad \hbox{a.e.}
\end{equation}
\end{lemma}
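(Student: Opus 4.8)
\emph{Proof plan.} The plan is to reduce everything to a pointwise, purely algebraic statement at a.e.\ $x\in U$. At such a point set $\Phi:=D_Pf(x):\fg\to\fg'$, a graded Lie algebra homomorphism; since $\alpha$ (resp.\ $\beta$) is left-invariant, the coefficients in the definition of the Pansu pullback are constant, so the pointwise value $(f_P^*\alpha)(x)$ is just the ordinary algebraic pullback $\Phi^*\alpha$ of $\alpha\in\Lambda^*\fg'$. The structural input I would use is that this pullback commutes with the Chevalley--Eilenberg differential on $\Lambda^*\fg$ and $\Lambda^*\fg'$: formula \eqref{eq:d_commutes_with_aut(fg)}, whose proof through Lemma~\ref{le:d_acts_on_algebra} only uses that $\Phi$ is linear and bracket-preserving, gives $\Phi^*(d\gamma)=d(\Phi^*\gamma)$ for any left-invariant $\gamma$. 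Everything else is weight bookkeeping via Lemma~\ref{lem_weight_facts}.

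For the case $\alpha=d\gamma$, I would first exploit that $\Phi^*\alpha$ does not depend on the choice of primitive $\gamma$: decomposing $\gamma$ into homogeneous components and using that $d$ carries a homogeneous weight-$w$ form to one of weight $w$ or to $0$ (Lemma~\ref{lem_weight_facts}\eqref{it:weight_facts_dalpha}), the components of $\gamma$ of weight $>\wt(\alpha)$ contribute nothing to $d\gamma=\alpha$, so I may discard them and assume $\wt(\gamma)\le\wt(\alpha)$. Then by \eqref{eq:pullback_nonhomogeneous} and \eqref{eq:subadditivity_wedge_weight} the form $\Phi^*\gamma\wedge\beta$ has degree $(\deg\alpha-1)+\deg\beta=N-2$ and weight at most $\wt(\gamma)+\wt(\beta)\le\wt(\alpha)+\wt(\beta)\le-\nu+1$; but a nonzero $(N-2)$-form has weight at least $-(N-2)+N-\nu=2-\nu>-\nu+1$ by Lemma~\ref{lem_weight_facts}\eqref{it:weight_facts_vanishing}, forcing $\Phi^*\gamma\wedge\beta=0$. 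Since $\beta$ is closed I then conclude $f_P^*\alpha\wedge\beta=\Phi^*(d\gamma)\wedge\beta=d(\Phi^*\gamma)\wedge\beta=d(\Phi^*\gamma\wedge\beta)=0$ a.e.\ in $x$.

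The case $\beta=d\gamma$ is dual: here $\Phi^*\alpha$ is closed in $\Lambda^*\fg$ because $\alpha$ is closed and $\Phi^*$ commutes with $d$. After trimming $\gamma$ so that $\wt(\gamma)\le\wt(\beta)$, the form $f_P^*\alpha\wedge\gamma$ has degree $\deg\alpha+(\deg\beta-1)=N-2$ and weight at most $\wt(\alpha)+\wt(\beta)\le-\nu+1<2-\nu$, hence vanishes, and then $f_P^*\alpha\wedge\beta=f_P^*\alpha\wedge d\gamma=\pm\,d(f_P^*\alpha\wedge\gamma)=0$, using $d(f_P^*\alpha)=0$. Throughout I would handle the borderline cases where some form vanishes identically (and thus has no well-defined weight) by reading the desired conclusion off directly.

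I do not expect a genuine obstacle, since no analysis is involved, only multilinear algebra and scaling, but two points must be respected. First, everything hinges on the strict inequality $-\nu+1<2-\nu$ in the vanishing bound for $(N-2)$-forms; under the weaker hypothesis $\wt(\alpha)+\wt(\beta)\le-\nu$ one would sit exactly at the borderline and the conclusion would genuinely fail, which is precisely why the Pullback Theorem carries nontrivial information in that regime. Second, bounding $\wt(\Phi^*\gamma)$ by $\wt(\alpha)$ (resp.\ $\wt(\beta)$) is only legitimate after trimming $\gamma$ down to weight $\le\wt(\alpha)$, since a raw primitive can have arbitrarily large weight because $d$ only decreases weight.
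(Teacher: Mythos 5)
Your argument is correct and coincides with the paper's proof: both reduce to a pointwise algebraic statement for the graded homomorphism $\Phi = D_Pf(x)$, trim the primitive $\gamma$ to weight $\le\wt(\alpha)$ (resp.\ $\wt(\beta)$) via the homogeneous decomposition and Lemma~\ref{lem_weight_facts}\eqref{it:weight_facts_dalpha}, and then invoke the $(N-2)$-form weight threshold from Lemma~\ref{lem_weight_facts}\eqref{it:weight_facts_vanishing} to kill $\Phi^*\gamma\wedge\beta$ (resp.\ $\Phi^*\alpha\wedge\gamma$), finishing with $d(\Phi^*\gamma\wedge\beta)=0$ using that $d$ commutes with $\Phi^*$ and that $\beta$ (resp.\ $\alpha$) is closed. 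One small slip in your closing remark: since $\wt(\alpha)+\wt(\beta)\le-\nu$ is a \emph{stronger} hypothesis than $\le-\nu+1$, the genuine borderline for the vanishing argument is $\wt(\alpha)+\wt(\beta)\le-\nu+2$, not $\le-\nu$.
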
 

\begin{remark}
 Since $d(\varphi\beta)=   d \varphi \wedge \beta$,  
    (\ref{eq:trivial_pull_back})  implies  
     $\int_U f_P^*(\alpha) \wedge d(\varphi \beta) = 0$.
\end{remark}

\begin{proof} 
In view of the definition of the Pansu pullback it suffices to show the following 
pointwise statement for $\alpha \in \Lambda^*\fg'$, $\beta \in \Lambda^*\fg$.
If  \eqref{eq:weight_special2} holds, if $\alpha = d\gamma$ or $\beta = d\gamma$
and if $\Phi: \fg \to \fg'$ is a graded algebra homomorphism
then 
\begin{equation}    \label{eq:trivial_pull_back2}
\Phi^* \alpha \wedge \beta = 0. 
\end{equation}  
Here we use that exterior differentiation of left invariant forms induces a linear operation on $\Lambda^*\fg$ (and on $\Lambda^* \fg'$),
see  \eqref{eq:exterior_derivative_on_algebra}.

Now assume  first that  $\alpha = d\gamma$. Then $\deg \gamma = \deg \alpha -1$. We claim that we may assume that $\wt(\gamma) = \wt(\alpha)$. 
To see this write  $\gamma$ as a sum of homogeneous forms of weight $s$
$$\gamma = \sum_{s= -\nu}^{-\deg\gamma} \gamma_{s}.$$
By Lemma~\ref{lem_weight_facts}~\eqref{it:weight_facts_dalpha} we have  $d \gamma_{s} = 0$ or $\wt(d\gamma_s) = s$. 
By the definition of $\wt(\alpha)$  we thus  must have $d \gamma_s = 0$ if $s > \wt(\alpha)$
and $\gamma_{\wt(\alpha)} \ne 0$. 
Hence $d \gamma = d \tilde \gamma$ where $\tilde \gamma =  \sum_{s= -\nu}^{\wt(\alpha)} \gamma_{s}$
 and $\wt(\tilde \gamma) = \wt(\alpha)$. 

 It follows from  \eqref{eq:exterior_derivative_on_algebra} that the action of $d$ on $\fg'$  and $\fg$ commutes with pullback by $\Phi$. Since
 $d\beta = 0$ we get
$$ \Phi^*(\alpha)  \wedge \beta = d \Phi^*(\gamma)  \wedge \beta = d( \Phi^*(\gamma) \wedge \beta).$$
The form  $\Phi^*(\gamma) \wedge \beta$ has degree $N-2$ and weight $ \le -\nu+1$ and hence vanishes by 
   Lemma~\ref{lem_weight_facts}~\eqref{it:weight_facts_vanishing}.
This proves   \eqref{eq:trivial_pull_back2} if $\alpha = d\gamma$.

If $\beta = d\gamma$ then $\deg \gamma = \deg\beta  -1$ and we may assume that $\wt(\gamma) = \wt(\beta)$. 
Now using again  that pullback by $\Phi$ commutes with $d$
 and the fact that $\alpha$ is closed we get
$ d \Phi^*\alpha = 0$.
Thus
$$ \Phi^* \alpha \wedge \beta = \Phi^* \alpha \wedge d\gamma =
 (-1)^{\deg \alpha}   d(\Phi^* \alpha \wedge \gamma).$$
Now by         \eqref{eq:subadditivity_wedge_weight} and 
     \eqref{eq:pullback_nonhomogeneous} the form     $\Phi^* \alpha \wedge \gamma$ has weight $\le -\nu +1$. Since this form has degree $N-2$
it must vanish. 
\end{proof}

\section{Center of mass and mollification in Carnot groups}
\label{sec_center_of_mass_mollification}

In this section we introduce a mollification procedure for mappings into Carnot groups; this will be used in the proof of the pullback theorem in the next section.   The mollification is based on a notion of center of mass due to Buser-Karcher \cite{karcher_buser_almost_flat_manifolds}.  Since our application requires additional estimates, we give a short self-contained construction of the center of mass; our approach is different from that of Buser-Karcher, which  reduces the nilpotent case to the Riemannian one \cite{karcher77}.  We remark that geometric center-of-mass constructions have played an important role in other developments in geometry, for instance \cite{douady_earle,besson_courtois_gallot}.

Let $G$ and $G'$ be Carnot groups with respective Lie algebras
$\fg$, $\fg'$, topological dimensions
$N$ and $N'$, horizontal dimensions $n$ and $n'$, and 
homogeneous dimensions $\nu$ and $\nu'$. 
     We fix left invariant Riemannian metrics on $G$, $G'$.

\subsection{Center of mass in Carnot groups}
We  define a center-of-mass for a compactly supported probability
measure in a Carnot group; this behaves well with respect to left translation
and rescaling.

Recall that if $G$ is a Carnot group, then the exponential map $\exp:\fg\ra G$ 
is a diffeomorphism; we let $\log:G\ra\fg$ denote the inverse diffeomorphism.
Recall that $\exp$ (and hence also $\log$) are $\aut(G)$-equivariant, where
$\aut(G)$ acts on $G$ in the tautological way, and on $\fg$ by the derivative
action, i.e. for $\phi\in\aut(G)$ we have $\exp(D\phi(v))=\phi(\exp(v))$.
For $x\in G$, let $\log_x=\log\circ \ell_{x^{-1}}$, where $\ell_g$ is the left
translation by $g$.

Suppose $\nu$ is a compactly supported Borel
probability measure on $G$.  We define a function
$C_\nu:G\ra \fg$ by integration:
$$
C_\nu(x)=\int_\fg \id_{\fg}\,(d(\log_x)_*\nu)=\int_G\log_x\,d\nu
=\int_G\log_x(y)\;d\nu(y)\,,
$$
where we are viewing $\log_x$ as a vector-valued function in the latter
two integrals.  Let $\tilde C_\nu:=C_\nu\circ \exp$.

\begin{lemma}
\label{lem_carnot_center_of_mass}
For any compactly supported Borel probability measure $\nu$:
\ben

\item    For any automorphism $\Phi:G\ra G$, $x,y\in G$, we have
\begin{equation}  \label{eq:center_of_mass_push_forward}
\begin{aligned}
C_{\Phi_*\nu}(\Phi(x))&=D\Phi(C_\nu(x)),\\
C_{(\ell_y)_*\nu}(yx)&=C_\nu(x)\,.
\end{aligned}
\end{equation}
\item If $\nu$ is inversion symmetric, $I_*\nu=\nu$, then $C_\nu(e)=0$.   Here $I: G\ra G$ is the map given by  $I(g)=g^{-1}$. 
\item $C_\nu$ is smooth.
\item $C_\nu^{-1}(0)$
contains a unique element, which we denote by $\com(\nu)$.
\item \label{it:lipbound_com} There is a constant  $C_1$ such that if $\spt(\nu)\subset B(x,R)$, then $\com(\nu)\subset B(x,C_1R)$.
\item If $g\in G$, $\Phi\in \aut(G)$, then
$$
\com((\Phi\circ\ell_g)_*\nu)=(\Phi\circ\ell_g)(\com\nu)\,.
$$
\item For every $x_0\in G$, $v\in T_{x_0}G$, we have 
\begin{equation}
\label{eqn_compression_bound}
\|DC_\nu(x_0)(v)\|\geq C^{-1}_2\|v\|
\end{equation}
where $C_2$ depends on $\diam(\{x_0\}\cup \spt(\nu))$.
\een
\end{lemma}

\begin{remark}
Although it is not needed in this paper, it is possible to get a finer description of the function $C_\nu$ and the center of mass.  We show in \cite{kmx_approximation_low_p} that $C_\nu \circ \exp$ is a polynomial of degree $s-1$ if $G$ is a step $s$ Carnot group,
that the equation $C_\nu(\exp X) = Z$ can be solved recursively using the grading of $G$ 
and that $\log \circ \com_\nu$ can be expressed as a polynomial in the multilinear moments $$A_i = \int_\fg L_i(Y, \ldots, Y)  \,  d(\log_* \nu)(Y)\,. $$ 
\end{remark}

\begin{proof}~

(1) and (2) follow from unwinding definitions.

(3).  We have 
\begin{equation}
\label{eqn_tilde_c_nu}
\tilde C_\nu(X)=\int_{\fg}\Psi(X,Y)\,d(\log_*\nu)(Y)
\end{equation}
where $\Psi(X,Y):=\log(\exp(-X)\exp(Y))$.  Since $\Psi$ is smooth and $\nu$ has compact support, it follows that $\tilde C_\nu$ is smooth, and hence $C_\nu=\tilde C_\nu\circ \log$ is smooth as well.  This proves (3).

\bigskip
{\em Claim. There exist $\eps>0$, $r_0>0$ such that if $\spt(\log_*\nu)\subset B(0,\eps)$, then $\tilde C_\nu(X)=0$ has a unique solution in $B(0,r_0)$.}

\begin{proof}[Proof of Claim]
We have $\Psi(0,0)=0$ and $D_X\Psi(0,0)=-\id_{\fg}$.  So for some $r_0>0$ we have $\|D_X\Psi(X,Y)+\id_{\fg}\|<\frac{1}{10}$ when $X  , Y\in B(0,r_0)$, and if $\eps\in (0,r_0)$ is sufficiently small, then  $\|\Psi(0,Y)\|<\frac{r_0}{10}$ for $Y\in B(0,\eps)$.  If $\spt(\log_*\nu)\subset B(0,\eps)$ then \eqref{eqn_tilde_c_nu} gives 
\begin{align}
\label{eqn_tilde_c_nu_estimates}
\|\tilde C_\nu(0)\|<\frac{r_0}{10}\,,\quad \|D\tilde C_\nu(X)+\id_{\fg}\| <\frac{1}{10}
\end{align} 
for all $X\in B(0,r_0)$.  The (proof of the)  Inverse Function Theorem now implies that $\tilde C_\nu(X)=0$ has exactly one solution in $B(0,r_0)$. 
\end{proof} 

\bigskip
(4).  By (1), we may apply a Carnot contracting automorphism to reduce to the case when $\spt(\log_*\nu)\subset B(0,\eps)$.  Hence by the claim there is a solution to the equation $C_\nu(x)=0$ in $\exp(B(0,r_0))$.  Similarly, if $C_\nu(x_1)=C_\nu(x_2)=0\in G$, then we may apply a further contracting automorphism and the claim to conclude that $x_1=x_2$.

(5).    This follows from the claim by rescaling: choose $A\in (1,\infty)$ such that
$$
B_{d_{CC}}(e,A^{-1})\subset\exp(B(0,\eps))\subset\exp(B(0,r_0))\subset B_{d_{CC}}(e,A)\,;
$$
applying (1) and the claim to $(\de_{(AR)^{-1}})_*\nu$ we get $\com\nu\in B_{d_{CC}}(e,C_1R)$
with $C_1:=A^2$.

(6).  This follows from (1).

(7).  By (6), after applying a left translation and a Carnot dilation, we may assume that $x_0=e$ and $\spt(\log_*\nu)\subset B(0,\eps)$, where $\eps$ is as in the claim.  Then by \eqref{eqn_tilde_c_nu_estimates} 
$$
\|DC_\nu(e)(v)\|=\|D(\tilde C_\nu\circ\log)(e)(v)\|\geq \frac{9}{10}\|v\|
$$
which implies \eqref{eqn_compression_bound}.
\end{proof}

\bigskip\bigskip

\subsection{Mollifying maps between Carnot groups}
Let $\si_1$ be a smooth probability measure on $G$ with $\spt(\si_1)
\subset B(e,1)$.  We also assume that $\si_1$ is symmetric under inversion:
$I_*\si_1=\si_1$, where $I(x)=x^{-1}$.
For $x\in G$, $\rho\in (0,\infty)$, let $\si_\rho$, $\si_x$, and $\si_{x,\rho}$
be the pushforwards of $\si_1$ under the the corresponding Carnot scaling
and left translation:
$$
\si_\rho=(\de_\rho)_*\si_1\,,\quad \si_x=(\ell_x)_*\si_1\,,\quad
\si_{x,\rho}=(\ell_x\circ\de_\rho)_*\si_1=(\ell_x)_*(\si_\rho)\,.
$$

Let $U \subset G$ be open. Let $G'$ be another Carnot group and let $f: U \to G'$ be continuous. 
For $\rho > 0$ set
\begin{equation}  \label{eq:U_rho}
U_\rho := \{ x : B(x, \rho) \subset U\} 
\end{equation}
For $x \in U_1$ we have $\spt (\sigma_x) = x + \spt (\sigma) \subset U$.
Thus we may define $f_1:U_1\ra G'$ by
$$
f_1(x)=\com(f_*(\si_{x}))\,,
$$
and $f_\rho:U_\rho \ra G'$ by 
\begin{equation}
\label{eqn_def_f_rho}
f_\rho=\de_\rho\circ(\de_{\rho^{-1}}\circ f\circ \de_\rho)_1\circ \de_{\rho^{-1}}\,.
\end{equation}

\begin{lemma}
\label{lem_moll_prop} 
\mbox{}
\ben
\item For all $\rho\in(0,\infty)$, 
     $x\in U_\rho$ we have
\begin{align*}
\de_{\rho^{-1}}\circ f_\rho\circ\de_\rho
&=(\de_{\rho^{-1}}\circ f\circ \de_\rho)_1\\
f_\rho(x)&=\com f_*(\si_{x,\rho}).
\end{align*}
\item If $x \in U_1$ and $f(B(x,1))\subset B(f(x),R)$, then the derivatives of $\de_{R^{-1}}\circ f_1$ are
controlled near $x$, i.e.  $\|D^i(\de_{R^{-1}}\circ f_1)(y)\|<C=C(i)$ for $y$ close to $x$. In particular, 
$Df_1(x)=D(\de_R\circ (\de_{R^{-1}}\circ f_1)(x)=D\de_R\circ T$ where $\|T\|<C'$ and $C'=C'(G,G')$ is independent of $R$.
\item If $f:G\ra G'$ is a group homomorphism, then $f_1=f$.
\item 
If $\{f_k:U\ra G'\}$ is a sequence of continuous maps, 
and $f_k\ra f_\infty$ in $C^0_{loc}(U)$, then the sequence  of 
mollified maps $\{(f_k)_1\}$ converges in $C^j_{loc}(U_1)$ to $(f_\infty)_1$, for all $j$.
\item   We have $f_\rho \to f$ in $C^0_{loc}(U)$ 
       as $\rho \to 0$. 
\een
\end{lemma}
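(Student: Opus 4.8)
The plan is to prove the five assertions in the order (1), (3), (5), (2), (4), the common engine being an explicit identification of the mollified map as a center of mass. Concretely, I would first derive from the definition \eqref{eqn_def_f_rho}, together with the equivariance of $\com$ under automorphisms and left translations (Lemma~\ref{le:properties_com}(1)) and the identity $\de_\rho\circ\ell_z=\ell_{\de_\rho(z)}\circ\de_\rho$, that
\begin{equation*}
f_\rho(x)=\com\big(f_*\si_{x,\rho}\big)\qquad\text{for all }x\in U_\rho,\ \rho>0 .
\end{equation*}
Granting this, assertion (1) is just \eqref{eqn_def_f_rho} conjugated by $\de_\rho$, and assertion (3) is a one-line computation: for a homomorphism $f$ one has $f\circ\ell_x=\ell_{f(x)}\circ f$, so $f_1(x)=\com\big((\ell_{f(x)}\circ f)_*\si_1\big)=\ell_{f(x)}\big(\com(f_*\si_1)\big)$ by Lemma~\ref{le:properties_com}(1), while $\com(f_*\si_1)=e$ by Lemma~\ref{lem_carnot_center_of_mass}(5) because $f$ intertwines the inversions and $\si_1$ is inversion-symmetric; hence $f_1(x)=f(x)$.

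For (5) I would feed the representation into the center-of-mass bound Lemma~\ref{le:properties_com}(2): the probability measure $f_*\si_{x,\rho}$ is supported in $f(B(x,\rho))\subset B\big(f(x),\om_f(x,\rho)\big)$, where $\om_f(x,\rho):=\sup\{d_{CC}(f(x),f(y)):d_{CC}(x,y)\le\rho\}$, so $f_\rho(x)=\com(f_*\si_{x,\rho})\in B\big(f(x),C\,\om_f(x,\rho)\big)$; since $f$ is continuous, $\om_f(x,\rho)\to 0$, and uniform continuity of $f$ on compact subsets of $U$ upgrades this to $f_\rho\to f$ in $C_{loc}(U)$.

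The analytic core is (2), and with it (4). The key point — and the reason for averaging against a \emph{smooth} measure rather than taking a bare average — is to move all the $x$-dependence of $f_1(x)=\com(f_*\si_x)$ onto a smooth cutoff, leaving $f$ appearing only undifferentiated inside a bounded region. Writing $\varphi$ for the smooth density of $\si_1$ with respect to Haar measure and changing variables $w=xz$ (legitimate by unimodularity), the defining relation $C_{f_*\si_x}(f_1(x))=0$ becomes $\Psi(f_1(x),x)=0$, where
\begin{equation*}
\Psi(y,x):=\int_G\log\!\big(y^{-1}f(w)\big)\,\varphi(x^{-1}w)\,dw .
\end{equation*}
Since $\partial_y\Psi(y,x)$ is invertible — this is exactly the local-diffeomorphism statement of Lemma~\ref{lem_carnot_center_of_mass}(2) — the implicit function theorem applies, and because $\Psi$ is $C^\infty$ jointly in $(y,x)$ with derivatives controlled purely by $\varphi$, $\diam\spt\si_1$, and a uniform bound on $|\log f|$ on the relevant ball, one reads off (2). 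To make the bound uniform one first reduces, via Lemma~\ref{le:properties_com}(1), to the case $f(B(x,1))\subset B(e,1)$ by replacing $f$ with $g:=\de_{R^{-1}}\circ\ell_{f(x)^{-1}}\circ f$; carrying the translation and dilation back through $f_1=\ell_{f(x)}\circ\de_R\circ g_1$, and using the left-invariant trivialization of $TG$ in which $D\ell_a\equiv\id$, yields the factorization $Df_1(x)=D\de_R\circ T$ with $\|T\|$ universally bounded. For (4) the same representation gives the claim almost for free: differentiating $\Psi$ in $y$ or in $x$ never differentiates $f$, so $f_k\to f_\infty$ in $C^0_{loc}$ forces $\Psi_k\to\Psi_\infty$ in $C^j_{loc}$ for every $j$, and stability of the implicit-function-theorem solution gives $(f_k)_1\to(f_\infty)_1$ in $C^j_{loc}(U_1)$.

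The main obstacle I anticipate is the bookkeeping in (2): tracking how the auxiliary translation and dilation used in the reduction affect the derivative estimates, and in particular checking that the asserted $R$-independence of $C'$ in the factored form $Df_1(x)=D\de_R\circ T$ is genuine. This is precisely where the left-invariant trivialization is essential, since it is what makes left translations act trivially on first derivatives; everything else should be a routine application of the equivariance identities of Lemma~\ref{le:properties_com} and differentiation under the integral sign.
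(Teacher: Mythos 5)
Your proof is correct and follows essentially the same route as the paper's: you establish $f_\rho(x)=\com(f_*\si_{x,\rho})$ from the equivariance of $\com$ (the paper does exactly this in its proof of (5)), handle (1), (3), and (5) by the same equivariance and inversion-symmetry calculations, and prove (2) via the implicit equation $C_{f_*\si_x}(f_1(x))=0$, observing that all $x$-dependence sits on the smooth mollifier density so that $f$ is never differentiated. The only genuine difference is in (4): you invoke $C^j$-stability of the implicit-function-theorem solution directly, whereas the paper argues by contradiction using the uniform derivative bounds from (2) together with Arzel\`a--Ascoli and uniqueness of the center of mass. Both arguments rest on the same two facts (uniform $C^\infty$ bounds on the mollified maps, and determination of the limit by the defining integral equation); your version is a bit more direct, the paper's avoids spelling out the parameter-dependence version of the IFT. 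One small imprecision worth noting: in (2), the relevant uniform bound is on $\log(y^{-1}f(w))$ and its $y$-derivatives for $(y,w)$ in the compact set determined by $f(B(x,1))\subset B(f(x),R)$ and Lemma~\ref{le:properties_com}(2), not just on $|\log f|$ as you wrote; this is the input that makes the IFT constants uniform.
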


\begin{proof}
(1).  This is immediate from the definition.

(2).  Note that $\de_{R^{-1}}\circ f_1=(\de_{R^{-1}}\circ f)_1$, so we are reduced to the case
when $R=1$.  

Since the derivative estimates are unchanged if we pre or post-compose $f$ with left
translations, we may assume without loss of generality that $f(e)=e$, and that we want to estimate the derivative of $f_1$ at $e$.  So we are interested in 
$f_1(z)=\com(f_*\si_{z})$ for $z$ close to $e\in G$, which is determined implicitly by 
the equation
$$
C_{f_*\si_z}(x)=0\,.
$$             
Using that $\spt(\si_z) \subset B(e,1)$ for $z$ close to $e$  and that $f$ is defined on $B(e,1)$ we get  
\begin{align*}
C_{f_*\si_z}(x)=&\int_{G'}\log_x\,d(f_*\si_z)\\
=&  \int_{B(e,1)}      (\log_x\circ f)\,d\si_z\\
=&  \int_{B(e,1)}    (\log_x\circ f)\,d((\ell_z)_*\si_1)\\
=&  \int_{B(e,1)}   (\log\circ \ell_{x^{-1}}\circ f)(y)
(\al\circ\ell_{z^{-1}})(y)\,d\mu(y)\,,
\end{align*}
where  $\mu$ is the left invariant volume form on $G$, $\si_1=\al\,\mu$ and  $x\approx f_1(z)$.   Since $z\approx e$, we get that $d_{CC}(f_1(z),e)<C_1$ by Lemma~\ref{lem_carnot_center_of_mass}.           Also, 
we may assume that $\spt((\ell_z)_*\si_{1})=\spt(\al\circ \ell_{z^{-1} })\subset  B(e,1)$. 
Thus when $y$ is fixed, the integrand is a smooth function in $x$ and $z$ with 
derivatives bounded uniformly independent of $y\in \spt(\al\circ \ell_{z^{-1}})$.  The assertion now follows from the implicit function theorem and the lower bound (\ref{eqn_compression_bound}) on the derivative of $C_{f_*\si_z}$.

(3). For $y\in G$, 
\begin{align*}
f_1(y)=&\com(f_*\si_{y})=\com(f_*(\ell_y)_*\si_1)\\
=&\com((\ell_{f(y)})_*(f_*\si_1))=\ell_{f(y)}(\com(f_*\si_1))\\
=&f(y)\com(f_*\si_1)=f(y)
\end{align*}
since $f_*\si_1$ is symmetric.

(4). If this were false, there would be a compact subset $Y\subset U_1$,
an integer $j$, $\eps>0$, and a subsequence $\{f_{k_i}\}$ such that 
\begin{equation}
\label{eqn_eps_far}
d_{C^j}((f_{k_i})_1\restr_Y,(f_\infty)_1\restr_Y)\geq \eps
\end{equation}
for all $i$.
By (3), all derivatives of $(f_k)_1$ are uniformly bounded on compact subsets of
$U_1$.  Therefore a subsequence converges in $C^j_{loc}(U_1)$, 
to a limit $g_\infty$. To obtain a contradiction with 
\eqref{eqn_eps_far} it only remains to show that
$g_\infty = (f_\infty)_1$ in $U_1$. This identity  follows from the convergence $f_k\stackrel{C^0_{loc}(U)}{\ra} f_\infty$
by passing to the limit in the defining equation for $(f_k)_1(x)$ 
namely
$$ 0 = C_{  (f_k)_* \sigma_x}\big((f_k)_1(x)\big) = \int_{U} (\log \circ \ell_{ ((f_k)_1(x))^{-1}} \circ f_k)(y)   \, \, d\sigma_x(y).$$
To pass to the limit one uses that  for $x \in U_1$ the measure  $\sigma_x = (\ell_x)_* \sigma_1$ has compact support in $U$.

(5).           Let $K \subset U$ be compact. Then there exists a $\rho' > 0$ such that 
$f$ is uniformly continuous in a $\rho'$-neighbourhood of $K$. Let $\omega$ denote a modulus of continuity of $f$ restricted  to that neighbourhood. 
Then for $\rho \in (0, \rho')$ we have $\spt \sigma_{x,\rho} \subset B(x,\rho)$ and $\spt f_* \sigma_{x,\rho} \subset B(f(x), \omega(\rho))$.
Hence by (1) above   and        Lemma~\ref{lem_carnot_center_of_mass}~\eqref{it:lipbound_com}  
 we have
$$
f_\rho(x)= \com f_*(\sigma_{x,\rho})\in B(f(x), C \omega(\rho))
$$ 
for all $x \in K$. Thus  $f_\rho$ converges to $f$, uniformly in $K$. 
\end{proof}

\section{Pansu pullback and mollification}
\label{sec_pansu_pullback_mollification}

The main result of this section is Theorem~\ref{th:main_approximation} below,  which implies Theorem~\ref{th:pull_back} as an easy corollary.

\begin{theorem}[Main approximation theorem]
\label{th:main_approximation}
Let $G$, $G'$, and $f:U\ra U'$ be as in Theorem~\ref{th:pull_back}.
Suppose $\om\in \Om^k(U')$, $\eta\in\Om^{N-k}_c(U) $ are forms with continuous coefficients, such that $\wt(\om)+\wt(\eta)\leq -\nu$.  Then 
\begin{equation}  \label{eq:main_approximation_alternative}
  f_\rho^*\om\wedge \eta \stackrel{L^1_{\loc}}{\lra} f_P^*\om\wedge \eta
\end{equation}
where $f_\rho$ is the mollified map defined in (\ref{eqn_def_f_rho}).
Since $\eta$ has compact support, it follows that 

\begin{equation}  \label{eq:main_approximation}
\int_Uf_P^*\om\wedge \eta=\lim_{\rho\ra 0}\int_Uf_\rho^*\om\wedge \eta\,.
\end{equation}
\end{theorem}

\bigskip\bigskip

\begin{proof}[Proof of Theorem~\ref{th:pull_back} using Theorem~\ref{th:main_approximation}]
If $\om$ is smooth, then 
by Stokes' theorem
\begin{align*}
0&=\lim_{\rho\ra 0}\int_U d(f_\rho^*\om\we\eta)\\
&=\lim_{\rho\ra 0}\left(\int_U f_\rho^*d\om\we\eta+(-1)^k\int_Uf_\rho^*\om\we d\eta\right)\\
&=\int_U(f_P^*d\om\we\eta+(-1)^k\int_Uf_P^*\om\we d\eta\,.
\end{align*}
The general case follows by approximating $\om$ by smooth forms.    
\end{proof}

\bigskip 

We now turn to the proof of the Main Approximation Theorem. 

The strategy of the proof is as follows.  We  show convergence  of the integrals with the help of the Dominated Convergence Theorem.   To verify pointwise convergence $f_\rho^* \omega \wedge \eta\ra f_P^*\om\wedge d\eta$, by simple algebraic manipulations we first express $f_\rho^* \omega \wedge \eta(x)$ in terms of the rescaled, re-centered, and smoothed map $(\de_{\rho^{-1}}\circ f_x\circ \de_\rho)_1$
where $f_x = \ell_{f(x)^{-1}}\circ f\circ \ell_x$.
Then we  use
the definition of the Pansu derivative
and the properties of the center of mass smoothing to see that $(\de_{\rho^{-1}}\circ f_x\circ \de_\rho)_1$ converges in $C^\infty_{\loc}$ to the Pansu differential $D_Pf(x)$ when $x$ is a point of Pansu differentiability.  This gives pointwise convergence of the integrands.

\bigskip
\begin{lemma}
\label{lem_moll_calc}
Let $U \subset G$ be open and   
let $f: U \to G'$ be continuous. 
Suppose $\al\in\Om^k(G')$, $\gamma\in\Om^{N-k}(G)$ are left invariant forms of weight
$w_\al$ and $w_\gamma$,  respectively.
\ben
\item   For every $x\in U_\rho$, 
$$
(f_\rho^*\al  \wedge \gamma)(x)=\rho^{-(\nu + w_\al+w_\gamma)}(h_1^*\al \wedge \gamma)(\de_{\rho^{-1}}(x))\,,
$$
where $h=\de_{\rho^{-1}}\circ f\circ \de_\rho$.
\item  For every $x \in U_\rho$,
$$
(f_\rho^*\al \wedge \gamma)(x)=\rho^{-(\nu + w_\al+w_\gamma)}
\big((\de_{\rho^{-1}}\circ f_x\circ \de_\rho)_1^* \al   \wedge \gamma\big)(e)\,,
$$
where $f_x=\ell_{f(x)^{-1}}\circ f\circ \ell_x$.
\item If $x \in U_\rho$ and  $f(B(x,\rho))\subset B(f(x),C\rho)$, then 
\begin{equation*}
\|(f_\rho^*\al)  \wedge \gamma)(x) \| \leq 
C'\,C^{-w_\al}\rho^{-( \nu + w_\al+w_\gamma)}
\| \alpha\| \, \| \gamma\|.
\end{equation*}
\een
\end{lemma}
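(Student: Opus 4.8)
The plan is to prove the three statements in order, treating $\al$ and $\gamma$ as homogeneous forms of weights $w_\al$ and $w_\gamma$ (the only case needed, and the one for which the exact identities in (1)--(2) are literally true), and systematically identifying a form at a point with an element of $\Lambda^*\fg$ (resp.\ $\Lambda^*\fg'$) via the left-invariant frame $\th_1,\dots,\th_N$, so that ``the value of a top-degree form at a point'' is a scalar. I will also use the Section~\ref{se:weights} conventions $(\de_r)_*=\de_{r^{-1}}^*$, which give $\de_r^*\al=r^{-w_\al}\al$ for a left-invariant homogeneous form, and the fact that the Jacobian of $\de_{\rho^{-1}}$ in the left-invariant frame is the constant $\rho^{-\nu}$ (because the volume form has weight $-\nu$). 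For (1) I would set $h:=\de_{\rho^{-1}}\circ f\circ\de_\rho$, so that $f_\rho=\de_\rho\circ h_1\circ\de_{\rho^{-1}}$ by \eqref{eqn_def_f_rho} and hence $f_\rho^*\al=\de_{\rho^{-1}}^*\,h_1^*\,\de_\rho^*\al=\rho^{-w_\al}\,\de_{\rho^{-1}}^*(h_1^*\al)$; rewriting also $\gamma=\rho^{-w_\gamma}\,\de_{\rho^{-1}}^*\gamma$ by left-invariance and homogeneity, and using that pullback commutes with the wedge product, gives $f_\rho^*\al\wedge\gamma=\rho^{-(w_\al+w_\gamma)}\,\de_{\rho^{-1}}^*(h_1^*\al\wedge\gamma)$, and evaluation at $x$ then contributes the extra Jacobian factor $\rho^{-\nu}$. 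That is exactly the formula in (1).

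For (2) the point is to re-express the right-hand side of (1) at the origin. I would combine the intertwining relations $\de_s\circ\ell_g=\ell_{\de_s(g)}\circ\de_s$ and $\ell_g\circ\de_s=\de_s\circ\ell_{\de_{s^{-1}}(g)}$ (immediate, since $\de_s$ is a homomorphism) with the factorization $f=\ell_{f(x)}\circ f_x\circ\ell_{x^{-1}}$ to rewrite $h=\ell_a\circ k\circ\ell_{\bar x^{-1}}$, where $\bar x:=\de_{\rho^{-1}}(x)$, $a:=\de_{\rho^{-1}}(f(x))$, and $k:=\de_{\rho^{-1}}\circ f_x\circ\de_\rho$. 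Since the center-of-mass mollification commutes with pre- and post-composition by left translations (Lemma~\ref{le:properties_com}(1), just as in the proof of Lemma~\ref{lem_moll_prop}(3)), this gives $h_1=\ell_a\circ k_1\circ\ell_{\bar x^{-1}}$; left-invariance of $\al$ and $\gamma$ then yields $h_1^*\al\wedge\gamma=\ell_{\bar x^{-1}}^*(k_1^*\al\wedge\gamma)$, and since $\ell_{\bar x^{-1}}$ maps $\bar x$ to $e$ and carries the left-invariant frame at $\bar x$ to the one at $e$, the value of this $N$-form at $\bar x$ equals the value of $k_1^*\al\wedge\gamma$ at $e$. Substituting into (1) produces (2).

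For (3) I would start from (2) and estimate $\|(k_1^*\al\wedge\gamma)(e)\|$. Left-invariance of $d_{CC}$ and $d_{CC}\circ\de_r=r\,d_{CC}$ turn the hypothesis $f(B(x,\rho))\subset B(f(x),C\rho)$ into $k(e)=e$ and $k(B(e,1))\subset B(k(e),C)$, so Lemma~\ref{lem_moll_prop}(2) applies to $k$ at $e$ with $R=C$ and gives $Dk_1(e)=D\de_C\circ T$ with $\|T\|\le C'=C'(G,G')$. Writing $k_1=\de_C\circ g$ with $g:=\de_{C^{-1}}\circ k_1$, so $Dg(e)=T$, homogeneity of $\al$ gives $k_1^*\al=C^{-w_\al}\,g^*\al$; bounding the alternating $\deg\al$-form $(g^*\al)(e)$ by $\|\al\|\,\|Dg(e)\|^{\deg\al}$ --- using that left translations are isometries of the left-invariant Riemannian metric, so $\|\al(g(e))\|=\|\al\|$ --- then wedging with $\gamma(e)$ and reinserting the prefactor $\rho^{-(\nu+w_\al+w_\gamma)}$ from (2) gives the stated inequality, with a constant depending only on the fixed data.

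These computations are all routine; the one place demanding real care is the scaling bookkeeping in (1)--(2) --- keeping the Jacobian factor $\rho^{-\nu}$ coming from $\de_{\rho^{-1}}$ acting on top forms separate from the homogeneity factors $\rho^{-w_\al}$, $\rho^{-w_\gamma}$, and getting the direction of $(\de_r)_*=\de_{r^{-1}}^*$ right. The one genuinely substantive ingredient is Lemma~\ref{lem_moll_prop}(2) in step (3), which converts a Lipschitz-type bound on $f$ over $B(x,\rho)$ into a pointwise derivative bound on the mollified map after the normalizing dilation. A minor additional check, as in the proof of Lemma~\ref{lem_moll_prop}, is that $h$, $k$, $h_1$, $k_1$ are all defined on the appropriate shrunken domains when $x\in U_\rho$.
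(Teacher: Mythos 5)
Your proof is correct and follows the same route as the paper's: part (1) by pulling $\de_\rho$ through the definition of $f_\rho$ and absorbing the top-degree Jacobian into the factor $\rho^{-\nu}$; part (2) by conjugating $h$ by the left translations $\ell_{\de_{\rho^{-1}}f(x)}$ and $\ell_{\de_{\rho^{-1}}x^{-1}}$ and using equivariance of the center-of-mass mollification together with left-invariance of $\al,\gamma$; part (3) by rescaling by $\de_{C^{-1}}$ and invoking Lemma~\ref{lem_moll_prop}(2). The only cosmetic difference is that you base the estimate in (3) at the identity via part (2) (i.e.\ bound $Dk_1(e)$), whereas the paper works directly from part (1) and bounds $D(\de_{C^{-1}}\circ h_1)(\de_{\rho^{-1}}x)$; these are the same after the left translation from (2).
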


\begin{proof}
(1).  Note that  $\{ z : B(z, 1) \subset \delta_{\rho^{-1}} U\} = \delta_{\rho^{-1}} U_\delta$
and thus
$$h: \delta_{\rho^{-1}} U \to G', \quad h_1:  \delta_{\rho^{-1}} U_\delta \to G'.   $$ 
For $x \in U_\rho$ we have
\begin{align*}
(f_\rho^*(\al) \wedge \gamma)(x) 
=& \big(   (\de_\rho \circ h_1 \circ \de_{\rho^{-1}})^* \alpha \wedge \gamma\big)(x)\\
=& (\de_{\rho^{-1}}^* h_1^* \de_\rho^* \alpha \, \wedge\,  \de_{\rho^{-1}}^* \de_\rho^* \gamma)(x)\\
=& \rho^{-(w_\al + w_\gamma)}  (\de_{\rho^{-1}}^* h_1^* \alpha \wedge \de_{\rho^{-1}}^* \gamma)(x)\\
=&  \rho^{-(w_\al + w_\gamma)}  \big(\de_{\rho^{-1}}^* (h_1^* \alpha \wedge  \gamma)  \big)(x)\\
=&  \rho^{-(\nu + w_\al + w_\gamma)} (h_1^* \alpha \wedge  \gamma)(\de_{\rho^{-1}} x)
\end{align*}
In the last step we used that $h_1^* \alpha \wedge  \gamma$ is a multiple of the volume form, which has weight $-\nu$. 

\bigskip

(2). With $h$ as in (1), 
\begin{align*}
h=&\de_{\rho^{-1}}\circ f \circ \de_\rho\\
=&(\de_{\rho^{-1}}\circ \ell_{f(x)}\circ \de_\rho)\circ 
\de_{\rho^{-1}}\circ \ell_{f(x)^{-1}}\circ f\circ \ell_x\circ\de_\rho
\circ(\de_{\rho^{-1}}\circ \ell_{x^{-1}}\circ \de_\rho)\\
=&\ell_{\de_{\rho^{-1}}f(x)}
\circ \de_{\rho^{-1}}\circ f_x\circ \de_\rho
\circ \ell_{\de_{\rho^{-1}}x^{-1}}
\end{align*}
and so
$$
h_1=\ell_{\de_{\rho^{-1}}f(x)}
\circ (\de_{\rho^{-1}}\circ f_x\circ \de_\rho)_1
\circ \ell_{\de_{\rho^{-1}}x^{-1}}\,.
$$
Since $\al$ and $\gamma$ are left invariant we have for $x \in U_\rho$
\begin{align}   \label{eqn_h1_al}
&(    h_1^*\al    \wedge \gamma)
(\de_{\rho^{-1}}(x) ) \notag \\  
=&\ell_{\de_{\rho^{-1}}x^{-1}}^*[(\de_{\rho^{-1}}\circ f_x\circ \de_\rho)_1^*\al
\wedge \gamma] (\de_{\rho^{-1}}(x))    \\
=& [ (\de_{\rho^{-1}}\circ f_x\circ \de_\rho)_1^*\al
\wedge \gamma](e).  \notag
\end{align}
Combining (1) with (\ref{eqn_h1_al}) gives (2).

\bigskip
(3). By our assumptions we have 
$B(\de_{\rho^{-1}}(x),1) \subset \delta_{\rho^{-1}} U$ and     
 $$h(B(\de_{\rho^{-1}}(x),1))
\subset B(h(\de_{\rho^{-1}}(x)),C),$$
   so
$\|D(\de_{C^{-1}}\circ h_1)(\de_{\rho^{-1}}(x))\|\leq C'$
by Lemma \ref{lem_moll_prop}(2).  Using (1) 
and $\de_C^*\al=(\de_{C^{-1}})_*\al=C^{-w_\al}\al$ we get
\begin{align*}
& \|(f_\rho^*\al) \wedge \gamma)(x))\|   \\
=&\|\rho^{-(\nu + w_\al+w_\gamma)}(\de_C\circ(\de_{C^{-1}}\circ h_1))^*\al \wedge \gamma)
(\de_{\rho^{-1}}(x))\|\\
=&C^{-w_\al}\rho^{-(\nu + w_\al+w_\gamma)}\|(\de_{C^{-1}}\circ h_1)^*\al  \wedge \gamma)
(\de_{\rho^{-1}}(x))\|  \\
\leq& C'C^{-w_\al}\rho^{-(\nu + w_\al+w_\gamma)}
\|\al\|    \,  \|\gamma\|.
\end{align*}
\end{proof}

\bigskip
\begin{proof}[Proof of Theorem~\ref{th:main_approximation}]
It suffices to show the assertion if $\omega = a \alpha$ and $\eta = u \gamma$
where $\alpha$ and $\gamma$ are left invariant forms in $\Omega^{k}(G')$ and $\Omega^{N-k}(G)$, respectively, 
with $\wt(\alpha) + \wt(\gamma) \le - \nu$ and where $a$ and $u$ are continuous functions and $u$ has compact support
 in $U$.   Replacing $U$ by an open set $V$ with $\spt u \subset V \subset \subset U$ if necessary we may assume without loss of generality that
\begin{equation}   \label{eq:Lp_integrable}
f \in W^{1,p}(U;G').
\end{equation}

In order to show that 
$$
f_\rho^*\om\wedge \eta \stackrel{L^1_{\loc}}{\lra} f_P^*\om\wedge \eta$$
as $\rho\ra 0$ we will 
use the Dominated Convergence Theorem. 

We first verify pointwise convergence of the integrands
  in   $\spt u \subset U$. Since $\spt u$ is compact there exists a $\rho_0 > 0$ such that  $\spt u \subset U_\rho$
 for all $\rho \in (0, \rho_0)$.
 Moreover it follows from Lemma~\ref{lem_moll_prop}(5)  that $f_\rho \to f$ in $C^0(\spt u)$. Since $f(\spt u)$ is a compact subset of $U'$
 we may assume  that $f_\rho(\spt u) \subset U'$ for  all $\rho \in (0, \rho_0)$
 by making $\rho_0$ smaller if needed.
 In the following we will always assume $\rho \in (0, \rho_0)$. 
Then  the assertions in Lemma \ref{lem_moll_calc} hold for all $x \in \spt u$. Assertion (2) of that lemma gives for $x \in \spt u$
\begin{align*}
&f_\rho^*(\omega)  \wedge \eta (x)\\
=&(a\circ f_\rho)(x)\,u(x) \,    (f_\rho^*\al  \wedge \gamma)(x) \\
=&(a\circ f_\rho)(x) \,u(x)  \, \rho^{-(\nu + w_\al+w_\xi)}
\big( (\de_{\rho^{-1}}\circ f_x\circ \de_\rho)_1^*\al  \wedge \gamma \big)(e)\,.
\end{align*}

If  $f$ is      Pansu  differentiable at $x$,    then 
  $\de_{\rho^{-1}}\circ f_x\circ \de_\rho\stackrel{C^0_{loc}}{\lra}
D_Pf$.  (Recall that we are using the notation $D_Pf(x)$ to denote a graded Lie algebra homomorphism $\fg\ra \fg'$ and a homomorphism of Carnot groups $G\ra G'$, depending on the context.)   So by Lemma \ref{lem_moll_prop} parts (4) and (3) we get
$D(\de_{\rho^{-1}}\circ f_x\circ \de_\rho)_1(e)\ra D_Pf(x)$
as $\rho\ra 0$. 
If $w_\al+w_\gamma= - \nu $, then
\begin{align}  \label{eq:pointwise_convergence_pull_back}
(f_\rho^*(\omega)  \wedge \eta) (x) \ra &    
(a\circ f)(x)\,u(x) \, ((D_Pf(x))^* \al)(x)  \wedge \gamma(x)\\
=&(f_P^*\omega \wedge \eta)(x)  \nonumber
\end{align}
so we have pointwise convergence in this case.  If $w_\al+w_\gamma< - \nu$, then 
$(f_\rho^*(\omega)  \wedge \eta) (x) \ra 0$ as $\rho\ra 0$, while
$$
(f_P^*\omega \wedge \eta)(x)   =(a\circ f)(x)\,u(x)
\, \, ((D_P f)(x)^* \alpha)(x) \wedge \gamma(x).
$$
By 
 Lemma \ref{lem_weight_facts}  parts (3) and (2)     
 the last expression is a form of weight strictly less than $\nu$ and hence vanishes. 
Thus we have 
$\mu$-a.e. pointwise convergence
of the integrand.

Next we find an integrable majorant.  
  Pick $\nu<q<p$.   Let 
  $\psi=g^q$ where $g$ is as in Definition~\ref{def_sobolev_mapping}  in $U$ and extend $\psi$ by zero to $G \setminus U$. 
By    \eqref{eq:Lp_integrable} we have $\psi \in L^{\frac{p}{q}}(G)$. 
Let $M\psi$ be the maximal function:
\begin{equation}
\label{eqn_maximal_function}
M\psi(x)=\sup_{r > 0} \av_{B(x,r)} \psi \, d\mu \,.
\end{equation}
Then it is standard (see \cite[Chapter 2]{heinonen_lectures_analysis_metric_spaces})
 that $M\psi\in L^{\frac{p}{q}}(G)$,  
 and thus $(M\psi)^{-\frac{w_\al}{q}}\in L^1(\spt u)$.  

Let $x \in \spt u$.
If $\rho < \rho_0$ we have
 $B(x, \rho) \subset U$  and using Lemma~\ref{lem_sobolev_embedding}  
we get 
\begin{equation}
\label{eqn_maximal_function_controls_oscillation}
f(B(x,\rho))\subset B(f(x),C_x\rho)
\end{equation}
where 
$$
C_x\lesssim (M\psi)^{\frac{1}{q}}(x)\,.
$$
Now
by Lemma \ref{lem_moll_calc}(3)  we have for all $x \in \spt u$
\begin{align*}
&\| f_\rho^*(\omega ) \wedge \eta (x) \| \\
\leq&\|a\|_\infty   \, \|u\|_\infty \,   \| (f_\rho^*\al \wedge \gamma) (x) \| \\
\leq&C'  C_x^{-w_\al}\rho^{-(\nu + w_\al+w_\gamma)}
  \|a\|_\infty  \, \|u\|_\infty \, \|\al\|   \, \|\gamma\| \\
\lesssim &(M\psi)^\frac{-w_\al}{q}(x)\,.
\end{align*}
Since $f_\rho^*(\omega ) \wedge \eta$ vanishes on $G \setminus \spt u$  this  shows that for $\rho < \rho_0$ the integrands $\|  f_\rho^* \omega \wedge \eta \|$ are  dominated by a fixed $L^1$ function.

\bigskip

\bigskip

\end{proof}

\section{Rigidity of products}
\label{sec_rigidity_products}

In this section we prove Theorems~\ref{thm_main_product_intro}, \ref{thm_qis_between_products_intro} 
and Corollary~\ref{cor_factors_are_qi_intro}, in Subsections~\ref{subsec_proof_product_rigidity}, \ref{subsec_quasiisometries_between_products}, 
and \ref{subsec_factors_are_qi}, respectively.  In  Subsection~\ref{subsec_currents_proof}  we sketch an 
alternative proof of rigidity for quasiconformal homeomorphisms which is
 based
 on the behavior of currents under Sobolev mappings.

\bigskip\bigskip

\subsection{Proof of Theorem~\ref{thm_main_product_intro}}
\label{subsec_proof_product_rigidity}
We first treat a special case which exhibits the essential ideas, and then explain how to modify the argument to handle the general case.

\bigskip
\begin{proof}[Proof of Theorem~\ref{thm_main_product_intro} when $G=G'=\H\times\H$] 

Since $G=G_1\times G_2$ with $G_i\simeq \H$, we have a graded decomposition $\fg=\fg_1\oplus\fg_2$, so the grading $\fg=V_1\oplus V_2$ is of the form $V_i=\oplus_{j}V_{ij}$ where $i,j\in \{1,2\}$ and $V_{ij}:=V_i\cap\fg_j$.  

Let $f:G\supset U\ra U'\subset G$ be as in the statement of Theorem~\ref{thm_main_product_intro}.  We may assume without loss of generality that $U=U_1\times U_2$, where $U_1$, $U_2$ are connected open sets.

\bigskip
{\em Step 1: Any graded automorphism $\phi:\fg\ra\fg$ preserves the decomposition $\fg=\oplus_i\fg_i$, i.e. there is a permutation $\si:\{1,2\}\ra\{1,2\}$ such that $\phi(\fg_i)=\fg_{\si(i)}$.}

Pick $j\in \{1,2\}$.  Then $[V_{1j},V_1]=V_{2j}$, and since $\phi$ is a graded automorphism, it follows that the image $\phi(V_{1j})$ is a $2$-dimensional subspace of $V_1$ such that $[\phi(V_{1j}),V_1]\subset V_2$ is $1$-dimensional.  Hence the projection $\pi_k(\phi(V_{1,j}))\subset V_{1k}$ can be nontrivial for at most one $k\in \{1,2\}$, which means that $\phi(V_{1j})=V_{1\si(j)}$ for some $\si(j)\in \{1,2\}$.  Since $\phi$ is an automorphism the map $\si:\{1,2\}\ra\{1,2\}$ is a bijection.

\bigskip{\em Step 2: There is a permutation $\si$ such that for a.e. $x\in U$, the Pansu derivative $D_Pf(x):\fg\ra\fg$ satisfies $D_Pf(x)(\fg_i)=\fg_{\si(i)}$.}

Let $X_1,\ldots,X_6$ be a graded basis for $\fg$ such that $[X_1,X_2]=-X_3$, $[X_4,X_5]=-X_6$, $V_{11}=\Span\{X_1,X_2\}$, $V_{12}=\Span\{X_4,X_5\}$.  Now let $\th_1,\ldots,\th_6$ be the dual basis, and we adopt the notation from Section~\ref{se:weights}:  $\th_J=\th_{j_1}\wedge\ldots\wedge\th_{j_k}$ if $J=\{j_1<\ldots <j_k\}$. 

Since the Pansu derivative $D_Pf(x)$ is a well-defined measurably varying graded automorphism of $\fg$ for a.e. $x\in U$, by Step 1 we may define a measurable mapping $\si:U\ra S_2$ such that  
\begin{equation}
\label{eqn_derivative_action}
D_Pf(x)(\fg_i)=\fg_{\si(x)(i)}
\end{equation}
for a.e. $x\in U$.  It follows that $f_P^*\th_{123}=a_{123}\th_{123}+a_{456}\th_{456}$ where $a_{123},a_{456}\in L^1_{\loc}$ and
\begin{equation}
\begin{aligned}
\label{eqn_a_123_a_456_si}
a_{456}(x)&=0\;\;\text{when}\;\; \si(x)=\id\\
a_{123}(x)&=0\;\;\text{when}\;\;\si    (x)\neq\id\,.
\end{aligned}
\end{equation}

Pick $\psi\in C^\infty_c(U)$.  Then
$
d(\psi \th_{56})=\sum_{1\leq i\leq 4}(X_i\psi)\th_{i56}
$
so $\wt(d(\psi\th_{56}))\leq -4$.  Since $\wt(\th_{123})=-4$,  by Theorem~\ref{th:pull_back} we have
\begin{align*}
0=\int_Uf_P^*\th_{123}\wedge d(\psi \th_{56})&=\int_U(a_{123}\th_{123}+a_{456}\th_{456})\wedge d(\psi \th_{56})\\
&=\int_Ua_{123}(X_4\psi)\vol_G\,.
\end{align*}
Since this holds for all $\psi$, we get that  $X_4a_{123}=0$ as a distribution.  Similarly $X_5a_{123}=X_1a_{456}=X_2a_{456}$.  This forces $X_6a_{123}=X_3a_{456}=0$ since $X_3=-[X_1,X_2]$ and $X_6=-[X_4,X_5]$.  In view of the fact that $U_1$, $U_2$ are connected, these distributional equations  imply that, up to null sets, $a_{123}$, $a_{456}$ will be nonzero on subsets of the form $A_{123}\times U_2$, and $U_1\times A_{456}$, respectively.  If both subsets had positive measure, then  $a_{123}$, $a_{456}$ would both be nonzero a.e. in $A_{123}\times A_{456}$, which contradicts (\ref{eqn_a_123_a_456_si}).  Therefore, after modifying $\si$ on a set of measure zero, it will be constant in $U$ as desired.

\bigskip
{\em Step 3: $f$ is a product of mappings.}  We may assume without loss of generality that $\si=\id$.

By Fubini's theorem, for a.e. $y\in U_2$, the map $f_y:U\ra G$ defined by $f_y(x)=f(x,y)$ is in $W^{1,p}_{\loc}(U_1)$, and by the chain rule its Pansu derivative satisfies $(D_Pf)(x)(\fg_1)\subset \fg_1$ for a.e. $x\in U_1$.  Hence for a.e. $y\in U_2$ we have $D_P(\pi_2\circ f_y)(x)=0$ for a.e. $x\in U_1$, and since $U_1$ is connected it follows that $\pi_2\circ f_y$ is constant.  Because $p>\nu$ we know that $f$ is continuous, and therefore $\pi_2\circ f_y$ is constant for every $y\in U_2$, i.e.    
   $(\pi_2\circ f)(x,y)$ depends only on $y$.  Similarly $(\pi_1\circ f)(x,y)$ depends only on $x$.  Thus $f$ is a product of mappings.
\end{proof}

\bigskip
We now return to the general case of Theorem~\ref{thm_main_product_intro}.

Let $G:=\prod_{i\in I}G_i$,  $G':=\prod_{j\in I'}G'_j$, and $\fg$, $\fg'$ be the  graded Lie algebras, as in the statement of Theorem~\ref{thm_main_product_intro}.

The generalization of Step 1 of  the above proof is the following result from \cite[Prop. 2.5]{Xie_Pacific2013}.  We include a short proof here.
\begin{lemma}
\label{lem_isom_is_product}
Suppose $\fg=\oplus_{i\in I}\fg_i$, $\fg'=\oplus_{j\in I'}\fg_j'$ where every $\fg_i$, and $\fg'_j$ is nonabelian and does not admit a nontrivial decomposition as a direct sum of graded ideals.
Then any graded isomorphism $\phi:\fg\ra \fg'$ is a product of graded isomorphisms, i.e. there is a bijection $\si:I\ra I'$ and for every $i\in I$ there exists a graded isomorphism $\phi_i:\fg_i\ra \fg'_{\si(i)}$ such that for all $i\in I$ we have $\pi_{\si(i)}\circ\phi=\phi_i\circ\pi_i$.
\end{lemma}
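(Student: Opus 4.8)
The plan is to reduce the statement to the uniqueness, up to reordering, of the decomposition of a Carnot algebra into nonabelian graded ideals that are indecomposable (as direct sums of graded ideals). Indeed, since $\phi$ is a graded isomorphism, $\{\phi(\fg_i)\}_{i\in I}$ is a second such decomposition of $\fg'$, alongside $\{\fg'_j\}_{j\in I'}$; once we know these two collections of subspaces coincide, we obtain a bijection $\si:I\to I'$ with $\phi(\fg_i)=\fg'_{\si(i)}$, and then $\phi_i:=\phi|_{\fg_i}:\fg_i\to\fg'_{\si(i)}$ is automatically a graded isomorphism satisfying $\pi_{\si(i)}\circ\phi=\phi_i\circ\pi_i$ (both sides annihilate $\fg_k$ for $k\neq i$, since $\si$ is a bijection, and agree on $\fg_i$).

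First I would record two structural facts about a Carnot algebra $\fg$ with first layer $V_1$. (a) Every direct factor $\fa$ of $\fg$ --- a graded ideal admitting a graded ideal complement $\fk$ --- is horizontally generated, hence Carnot, and is determined by its first layer: since $[\fa,\fk]\subseteq\fa\cap\fk=\{0\}$, any iterated bracket of elements of $(V_1\cap\fa)\cup(V_1\cap\fk)$ that mixes the two parts vanishes, so $\fg=\langle V_1\cap\fa\rangle\oplus\langle V_1\cap\fk\rangle$, forcing $\fa=\langle V_1\cap\fa\rangle$. (b) If $\fg$ is nonabelian and indecomposable, then $Z(\fg)\cap V_1=\{0\}$: a nonzero horizontally-central $z$ would give a splitting $\fg=\R z\oplus\langle W'\rangle$ (with $V_1=\R z\oplus W'$, $W'\neq\{0\}$), contradicting indecomposability. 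Combining (a) and (b) with the elementary description of the graded ideal complements of a fixed direct factor $\fa$ as graphs of graded linear maps $\mu:\fa\to Z(\fk)$ that annihilate $[\fa,\fa]$, together with the inclusion $V_k\cap\fa\subseteq[\fa,\fa]$ for $k\geq 2$ (valid since $\fa$ is Carnot), one sees $\mu=0$; hence the graded ideal complement of a direct factor is unique.

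Next I would run an exchange/induction argument on $|I|$. Given two decompositions $\fg=\bigoplus_{i}\fa_i=\bigoplus_{j}\fb_j$ into nonabelian indecomposable graded ideals, fix $\fa_1$. Writing $p'_j:\fg\to\fb_j$ for the projections, the subspaces $\mathfrak{d}_j:=p'_j(\fa_1)$ are pairwise commuting graded ideals of $\fg$ with $\fa_1\subseteq\bigoplus_j\mathfrak{d}_j$, and their sum is a graded ideal of $\fg$. The decisive point --- and the step where I expect the real difficulty to lie --- is to show that, because $\fa_1$ is nonabelian and indecomposable, it is in fact contained in a single summand $\fb_{j_0}$ (equivalently, $\fa_1$ is invariant under every $p'_j$, i.e. $\fa_1=\bigoplus_j(\fa_1\cap\fb_j)$). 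I would attack this at the level of the horizontal layer, using that $V_1=\bigoplus_i(V_1\cap\fa_i)=\bigoplus_j(V_1\cap\fb_j)$ with all cross-brackets vanishing: a nontrivial spreading of $V_1\cap\fa_1$ across two blocks would force the nonzero bracket $[\fa_1,\fa_1]$ to have a nonzero component in each corresponding block, and pushing this through (via (a)--(b)) one must produce a genuine direct-sum decomposition of $\fa_1$ into graded ideals, contradicting indecomposability. Once $\fa_1\subseteq\fb_{j_0}$ is known, modularity of the subspace lattice gives $\fb_{j_0}=\fa_1\oplus\big(\fb_{j_0}\cap\bigoplus_{k\neq 1}\fa_k\big)$, a splitting of $\fb_{j_0}$ into graded ideals, so indecomposability of $\fb_{j_0}$ and $\fa_1\neq\{0\}$ yield $\fa_1=\fb_{j_0}$. (Alternatively, $\fa_1$ and $\fb_{j_0}$ are then direct factors of $\fg$ with the same complement $\bigoplus_{l\neq j_0}\fb_l$, and one concludes by the uniqueness of complements above.) Discarding the matched pair and applying the inductive hypothesis to $\bigoplus_{i\neq 1}\fa_i=\bigoplus_{l\neq j_0}\fb_l$ completes the argument. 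The heart of the matter, and the step requiring genuine care, is thus the ``no spreading'' assertion for nonabelian indecomposable factors.
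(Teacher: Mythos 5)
Your route is a Krull--Schmidt/exchange argument, genuinely different from the paper's, but the ``no spreading'' step -- which you correctly flag as the heart of the matter -- is left with a real gap that your sketch does not resolve. The issue is not merely that $V_1\cap\fa_1$ might \emph{project} nontrivially onto two blocks $V_1\cap\fb_{j_1}$, $V_1\cap\fb_{j_2}$; it is that $V_1\cap\fa_1$ could a priori sit \emph{diagonally}, i.e. $V_1\cap\fa_1\subsetneq\oplus_j\pi_j(V_1\cap\fa_1)$. In that case the graded ideals $\fa_1\cap\fb_j$ need not sum to $\fa_1$ and no direct-sum decomposition of $\fa_1$ appears; ``pushing this through (a)--(b)'' does not by itself rule the diagonal case out, and the intermediate claim that $[\fa_1,\fa_1]$ must project nontrivially onto each block is not obviously correct either. (For an arbitrary nonabelian horizontally-generated subalgebra projecting onto two blocks -- e.g.\ the diagonal copy of $\fh$ in $\fh\times\fh$ -- there is indeed no such decomposition; what must be used is that $\fa_1$ is a direct factor, and your sketch does not exploit that where it matters.) The paper supplies precisely the missing lemma via the notion of \emph{bracket maximality}: call $W\subseteq V_1$ bracket maximal if $W\subseteq Z\subseteq V_1$ and $[Z,V_1]=[W,V_1]$ force $Z=W$. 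Since $[Z,V_1]=[\oplus_j\pi_j(Z),V_1]$ whenever $V_1$ decomposes into mutually commuting summands, a bracket-maximal subspace is automatically block-diagonal. The paper checks that each $V_1\cap\fg_i$ is bracket maximal (using that the center of $\fg_j$ meets $V_1\cap\fg_j$ trivially for every $j$, i.e.\ your fact (b)), notes that $\phi$ transports bracket maximality, concludes $\phi(V_1\cap\fg_i)=\oplus_j\pi_j(\phi(V_1\cap\fg_i))$, and then finishes using indecomposability of the $\fg_j'$ -- no exchange induction is needed. Your supporting facts (a), (b), the graph description of complements (with $\mu:\fk\to Z(\fa)$ rather than $\mu:\fa\to Z(\fk)$, presumably a slip), and the modularity step are all correct; with bracket maximality, or an equivalent block-diagonality lemma, inserted to repair the no-spreading step, your argument would become a valid, if longer, alternative proof.
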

\begin{proof}
For subsets $S_1,S_2$ of $\fg$ or $\fg'$, we let $$[S_1,S_2]=\Span(\{[s_1,s_2]\mid s_i\in S_i\}),$$ 
    and
$\I(S_1)$ be the ideal generated by $S_1$.   Note that the center of $\fg$ intersects $V_1$ trivially, since every $\fg_i$ is nonabelian and has no nontrivial decomposition as a direct sum of graded ideals; similarly the center of $\fg'$ intersects $V_1'$ trivially.

We define a linear subspace $W\subset V_1\subset\fg$
     to be {\bf bracket maximal} if for every $Z\subset V_1$
with $Z\supset W$ and $[Z,V_1]=[W,V_1]$ we have $Z=W$.  Note that
$$
[Z,V_1]=[\oplus_{i\in I}\pi_i(Z),V_1],
$$
so if $W$ is bracket maximal then $W=\oplus_{i\in I}\pi_i(W)$.  A similar definition and remark applies in $\fg'$.

For $i\in I$, let $V_{1i}=V_1\cap \fg_i$.   
Then $V_{1i}$ is bracket maximal for all $i\in I$, since $ V_{1i}\subsetneq Z\subset V_1$ implies that $\pi_j(Z)\neq \{0\}$ for some $j\neq i$, and hence $[\pi_j(Z),V_{1j}]\neq\{0\}$ since the center of $\fg_j$ intersects $V_{1j}$ trivially.

Since $\Phi$ is a graded isomorphism, it follows that $\Phi(V_{1i})\subset V_1'$ is bracket maximal, and $\fg'=\oplus_i \I(\Phi(V_{1i}))$.  But since $\Phi(V_{1i})$ is bracket maximal we have
$$
\Phi(V_{1i})=\oplus_{j\in I'}\pi_{j}(\Phi(V_{1i}))\,,
$$
giving
\begin{align*}
\fg'=\oplus_{i\in I}\I(\Phi(V_{1i}))
=&\oplus_i\oplus_{j}\I(\pi_{j}(\Phi(V_{1i})))\\
=&\oplus_{j}\oplus_{i}\I(\pi_{j}(\Phi(V_{1i})))\,.
\end{align*}
Since $\fg'_j$ does not admit a nontrival decomposition as a direct sum of graded ideals, it follows that for every $j\in I'$,   there is a unique $\tau(j)\in I$  such that  the ideal 
$\I(\pi_{j}(\Phi(V_{1i})))=\{0\}$  for $i\not=\tau(j)$ and  $\I(\pi_{j}(\Phi(V_{1\tau(j)})))=\fg'_j$.  
      Since $\phi$ is a graded isomorphism, $\tau$ is the inverse of a bijection $\si:I\ra I'$, and the lemma follows.
\end{proof}

\bigskip
Let $f:G\supset U\ra G'$    
   be as in the statement of Theorem~\ref{thm_main_product_intro}, where $U=\prod_iU_i$ and the $U_i$s are connected.  By Lemma~\ref{lem_isom_is_product}, we may assume without loss of generality that $I=I'$ and $\fg_i=\fg_i'$ for all $i\in I$, and so there is a measurable function $\si:U\ra \perm(I)$ such that $D_Pf(x)(\fg_i)=\fg_{\si(x)(i)}$ for a.e. $x\in U$. 

For $i\in I$ let $\th_{i}$ denote the pullback $\pi_i^*\vol_{G_i}$ where $\vol_{G_i}$ is a volume form on the factor $G_i$.
Pick $i_0\in I$, and let $I_0\subset I$ be the indices $j\in I$ for which $\fg_j\simeq \fg_{i_0}$.  Then
$$
f_P^*\th_{i_0}=\sum_{j\in I_0}a_j\th_{j}\,,
$$
where $a_j\in L^1_{\loc}(U)$.     First suppose $|I_0| \geq 2$.   For $j,j'\in I_0$  distinct elements  and $X\in V_1\cap \fg_{j'}$,   define a form $\om$ by  $\om:=(\La_{i\neq j,j'}\th_i)\wedge i_X\th_{j'}$.  
    Taking $\psi\in C^\infty_c(U)$, we apply Theorem~\ref{th:pull_back} to get
$$
0=\int_Uf_P^*\th_{i_0}\wedge d(\psi\om)=\pm\int_Ua_j(X\psi)\wedge_{i\in I}\th_i
$$
so $Xa_j=0$ distributionally.  Since $V_1\cap \fg_{j'}$ generates $\fg_{j'}$ as a Lie algebra, we get that $Za_j=0$ for all $Z\in \fg_{j'}$.  Arguing as before, we   get that 
     there is some  $j=\si(i_0)\in I_0$ such that $a_j\neq 0$ and $a_k=0$
 for $k\in I_0\backslash\{j\}$   a.e. in $U$.    If $I_0=\{i_0\}$, then the same assertion holds trivially.
     The index $i_0$ was arbitrary, so we conclude that there is a permuation $\si\in \perm(I)$ such that $D_Pf(x)(\fg_i)=\fg_{\si(i)}$ for a.e. $x\in U$.       The last step of the proof is similar to the special case.
   The proof of   Theorem~\ref{thm_main_product_intro}
      is   now  complete.

\bigskip

\subsection{Proof of Theorem~\ref{thm_qis_between_products_intro}}
\label{subsec_quasiisometries_between_products}
We refer the reader to \cite{bowditch_course_ggt} for background material on quasi-isometries.

Let $\{\hat G_i\}_{i\in I}$, $\{\hat G_j'\}_{j\in I'}$ be collections where each $\hat G_i$, $\hat G'_j$ is either a simply-connected nilpotent Lie group with a left invariant Riemannian metric, or a finitely generated nilpotent group equipped with a word metric.  By \cite{pansu_croisssance} the associated collections of asymptotic cones $\{G_i\}_{i\in I}$, $\{G'_j\}_{j\in I'}$ are Carnot groups equipped with  Carnot-Caratheodory metrics, up to bilipschitz homeomorphism.   We assume that every $G_i$, $G_j'$ is nonabelian and indecomposable as a Carnot group.

Set $\hat G:=\prod_i\hat G_i$, $\hat G'=\prod_jG_j'$, and equip $\hat G$, $\hat G'$ with the $\ell^2$-distance functions 
$$
d^2_{\hat G}(x,x'):={\sum_id_{G_i}^2(\pi_i(x),\pi_i(x'))}\,,\quad
d^2_{\hat G'}(x,x'):={\sum_jd_{G_j'}^2(\pi_i(x),\pi_i(x'))}\,.
$$
If $i\in I$, we say that  $x,x'\in G$ are an {\bf $i$-pair} if $\pi_j(x)=\pi_j(x')$ for every $j\neq i$.

We begin with the following geometric result.

\begin{lemma}
\label{lem_pairs_connected}
Let $X$ be a metric space, where one of the following holds:
\begin{enumerate}[label=(\alph*)]
\item $X$ is isometric to  a simply connected nilpotent Lie group $H$ with a left invariant Riemannian metric, where $H\not\simeq \R$.
\item $X$ is the Cayley graph of a finitely generated nilpotent group $H_0$, where $H_0$ is not virtually cyclic.
\end{enumerate}   
Then there exist $\ul{R}<\infty$, $\la>0$ such that:
\ben
\item   for every $p,x_1,x_2\in X$ such that $R:=d(p,x_1)\geq \ul{R}$, $x_2\in A(p,R,2R)$, there is a path $\ga:[0,1]\ra X\setminus B(x,\la R)$ from $x_1$ to $x_2$.   Here $A(p, R, 2R)=B(p, 2R)\backslash \bar{B}(p, R)$. 
\item Let $P_r:=\{(x,x')\in X\times X\mid d(x,x')\geq r\}$ for $r\in [0,\infty)$.  Then for all $R\geq \ul{R}$, the subset $P_R\subset P_{\la R}$ is contained in a single path component of $P_{\la R}$.
\een
\end{lemma}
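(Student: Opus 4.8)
The plan is to derive both assertions from a uniform, quantitative form of the fact that these spaces are one-ended, using the asymptotic cone as the main engine. Assertion (1) I would prove by contradiction and rescaling. If it fails, then for each $n$ (taking $\lambda=\tfrac1n$ and $\underline R=n$ in the negated statement) there is a scale $R_n\ge n$, so $R_n\to\infty$, and a triple $(p_n,x_1^n,x_2^n)$ with $x_1^n\in S(p_n,R_n)$, $x_2^n\in A(p_n,R_n,2R_n)$ admitting no path from $x_1^n$ to $x_2^n$ inside $X\setminus B(p_n,\tfrac1n R_n)$. Using that the relevant isometry group acts transitively on a set of basepoints (all of $X$ in case (a); the vertices in case (b), where for path questions one may restrict to edge-paths between vertices, since a path in the full graph joining two vertices and avoiding a ball contains an edge-walk with the same property), we normalize $p_n=e$. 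Rescaling the metric by $R_n^{-1}$, the pointed spaces $(X,R_n^{-1}d,e)$ converge in the pointed Gromov--Hausdorff sense to the asymptotic cone $(G,d_{CC},e)$, which by \cite{pansu_croisssance} is a Carnot group; in cases (a) and (b) respectively the hypotheses $H\not\simeq\R$ and ``$H_0$ not virtually cyclic'' force $m:=\dim G\ge 2$, and $G$ is homeomorphic to $\R^m$ and is a geodesic space.

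Passing to a subsequence, the rescaled points $x_i^n$ converge to $x_i^\infty\in G$ with $d(e,x_1^\infty)=1$ and $d(e,x_2^\infty)\in[1,2]$, so $x_1^\infty,x_2^\infty\ne e$. Since $m\ge 2$, the set $G\setminus\{e\}$ is path connected, so there is a path $\bar\ga$ in $G$ from $x_1^\infty$ to $x_2^\infty$, and by compactness $\bar\ga$ stays at distance at least $2\de$ from $e$ for some $\de>0$. The key step is to transfer $\bar\ga$ back to a genuine path in $X$ at the finite scale $R_n$: discretize $\bar\ga$ into waypoints $z_0,\ldots,z_N$ with $d_{CC}(z_i,z_{i+1})<\de/10$, use the Gromov--Hausdorff approximation to choose $y_i^n\in X$ with $R_n^{-1}d(y_i^n,z_i)<\de/10$ (and $y_0^n,y_N^n$ within $\tfrac{\de}{5}R_n$ of $x_1^n,x_2^n$), and join consecutive $y_i^n$, and the two end corrections, by short paths in $X$ --- minimizing geodesics of length $<\tfrac{\de}{2}R_n$ in case (a), geodesic edge-paths of length $d(\cdot,\cdot)<\tfrac{\de}{2}R_n$ in case (b). Each such piece stays within $\tfrac{\de}{2}R_n$ of a point at distance $\ge\de R_n$ from $e$, and $x_1^n,x_2^n$ lie at distance $\ge R_n$ from $e$, so the concatenation avoids $B(e,\tfrac{\de}{2}R_n)$; for $n$ large, $\tfrac1n<\tfrac{\de}{2}$, contradicting the choice of $(e,x_1^n,x_2^n)$. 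I expect this transfer step to be the main technical obstacle: it is mostly bookkeeping with the Gromov--Hausdorff maps and the minor differences between the Riemannian and graph cases, but lining up the quantifiers (the fixed $\de$ coming from the subsequential limit versus the vanishing $\tfrac1n$) needs a little care.

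This yields a single $\lambda>0$ and $\underline R<\infty$ for (1). I would then record a telescoping consequence of (1): after enlarging $\underline R$, there is $\lambda'\in(0,\lambda]$ such that for every $a\in X$ and $r\ge\underline R$, any two points of $X\setminus B(a,r)$ are joined by a path inside $X\setminus B(a,\lambda' r)$, and a point at distance $\ge r$ from $a$ can be moved within $X\setminus B(a,\lambda' r)$ to a point at any prescribed larger distance from $a$. This follows by sliding a point inward (or outward) through the dyadic annuli $A(a,2^{\mp j}D,2^{\mp j+1}D)$ using (1), and finishing with one further application of (1) on the innermost annulus; since the smallest radius involved is $\gtrsim r$, all the required applications of (1) are legitimate once $\underline R$ is large enough.

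For (2), set $\lambda:=\lambda'$ and take $(a,b),(a',b')\in P_R$ with $R\ge\underline R$. First slide $b$, keeping $a$ fixed, to a point $b_1$ with $d(a,b_1)\ge 10\bigl(R+d(a,a')\bigr)$; this traces a path in $P_{\lambda R}$ from $(a,b)$ to $(a,b_1)$, and forces $d(a',b_1)\ge R$. Next slide $a$ to $a'$, keeping $b_1$ fixed, using that $a$ and $a'$ both lie in $X\setminus B(b_1,R)$; this is a path in $P_{\lambda R}$ from $(a,b_1)$ to $(a',b_1)$. Finally slide $b_1$ to $b'$, keeping $a'$ fixed, using that $b_1$ and $b'$ both lie in $X\setminus B(a',R)$; this is a path in $P_{\lambda R}$ from $(a',b_1)$ to $(a',b')$. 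Concatenating the three, $(a,b)$ and $(a',b')$ lie in the same path component of $P_{\lambda R}$, which is the assertion; note that (1) continues to hold with the (smaller) $\lambda$ and (larger) $\underline R$ chosen here, since avoiding a smaller ball is an easier condition.
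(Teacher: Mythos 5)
Your proof is correct and follows essentially the same route as the paper's: for (1), argue by contradiction, rescale by $R_n^{-1}$, pass to the asymptotic cone via pointed Gromov--Hausdorff convergence, use $\dim\ge 2$ to find a path in the cone avoiding a small ball around the basepoint, then discretize and transfer back via Gromov--Hausdorff approximations and short geodesics; for (2), slide one coordinate of the pair at a time using (1). The only difference is one of presentation: the paper's proof of (2) is terser (``Iterating (1), we may reduce to the case when $d(x,x')=d(y,y')=10d(x,y)$. Then applying (1) twice more\ldots''), while you spell out the telescoping consequence of (1) and the three-stage slide $(a,b)\leadsto(a,b_1)\leadsto(a',b_1)\leadsto(a',b')$ explicitly, which is cleaner bookkeeping but the same idea.
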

\begin{proof}
(1).  Suppose (1) is false.  Then there are sequences $\{p_k\}$, $\{x_{1,k}\}$, $\{x_{2,k}\}$ in $X$ such that $R_k:=d(p_k,x_{1,k})\ra \infty$, $x_{2,k}\in A(p_k,R_k,2R_k)$, but there is no path from $x_{1,k}$ to $x_{2,k}$ in $X\setminus B(p_k,\frac{1}{k}R_k)$.  After passing to a subsequence, the sequence of pointed metric spaces $(X,R_k^{-1}d_X,p_k)$ will pointed Gromov-Hausdorff converge to a pointed limit space $(X_\infty,d_\infty,p_\infty)$ where $(X_\infty,d_\infty)$ is bilipschitz homeomorphic to a Carnot group \cite{pansu_croisssance}.  After passing to a further subsequence, we may assume that the points $x_{1,k},x_{2,k}$ converge to a pair of points  $x_{1,\infty},x_{2,\infty}$ in $X_\infty$, i.e. their images under the same sequence of Gromov-Hausdorff approximations converge to $x_{1,\infty}$ and $x_{2,\infty}$, respectively.   Observe  that $d_\infty(p_\infty, x_{1, \infty})=1$ and 
$1\le d_\infty(p_\infty, x_{2, \infty})\le 2$.  

 Since in case (a) $H\not\simeq \R$ and in case (b) $H_0$ is not virtually cyclic, we know that $X_\infty$ is homeomorphic to $\R^n$ for some $n\geq 2$.  Therefore there is a path $\ga:[0,1]\ra X_\infty\setminus B(p_\infty,3a)$ from $x_{1,\infty}$ to $x_{2,\infty}$ for some $a>0$.  Subdividing $\ga$, we obtain a sequence of points
$$
x_{1,\infty}=z_0,\ldots,z_n=x_{2,\infty}\subset X_\infty\setminus B(p_\infty,3a)
$$
with $d(z_{j-1},z_j)<a$ for all $1\leq j\leq n$.   Hence for large $k$ there exist 
$$
x_{1,k}=z_{0,k},\ldots,z_{n,k}=x_{2,k}\subset X\setminus B(p_k,2aR_k)
$$
such that $d(z_{j-1,k},z_{j,k})<aR_k$ for all $1\leq j\leq n$.  Joining $z_{j-1,k}$ to $z_{j,k}$ by a geodesic $\ga_{j,k}$ and concatenating the $\ga_{j,k}$s, we get a contradiction.

(2).    Let $(x, x'),  (y, y')\in P_R$. We need to show that  $(x, x')$,   $(y, y') $  lie in the same path  component of   $P_{\la R}$.   Iterating (1), we may reduce to the case when $d(x,x')=d(y,y')=10d(x,y)$.  Then applying (1) twice more, we get that $(x,x')$, $(x,y')$ and $(y,y')$ lie in the same path component of $P_{\la R}$.

\end{proof}

\bigskip
Now let $\{\hat G_i\}_{i\in I}$, $\{\hat G'_j\}_{j\in I'}$ be as in Theorem~\ref{thm_qis_between_products_intro}.

\bigskip
Our first step is to show that the image of an $i$-pair
  $(x, x')$    under a quasi-isometry is approximately a $j$-pair for some $j=j(i,x,x')$, provided $d(x,x')$ is large.
\begin{lemma}
\label{lem_i_pair_far_apart}
For every $L\geq 1$, $A<\infty$, $\eps>0$, there is an $R=R(L,A,\eps)<\infty$ such that if $\Phi:\hat G\ra\hat G'$ is an $(L,A)$-quasi-isometry, $i\in I$, and $x,x'\in G$ is an $i$-pair such that $d(x,x')\geq R$, then for every $j\in I'$ we have
$$
\frac{d(\pi_j(\Phi(x)),\pi_j(\Phi(x')))}{d(\Phi(x),\Phi(x'))}\in
[0,\eps)\cup(1-\eps,1]\,.
$$
\end{lemma}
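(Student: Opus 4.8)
The plan is to argue by contradiction via a blow-down (asymptotic cone) construction that reduces the assertion to the product rigidity theorem, Theorem~\ref{thm_main_product_corollary}. Fix $L$, $A$, $\eps$ and suppose no $R$ works; taking $R=m$ for $m\in\N$ produces an $(L,A)$-quasi-isometry $\Phi_m\colon\hat G\ra\hat G'$, an index $i_m\in I$, an $i_m$-pair $x_m,x_m'\in\hat G$ with $R_m:=d(x_m,x_m')\ge m$, and an index $j_m\in I'$ such that
\[
\frac{d(\pi_{j_m}(\Phi_m(x_m)),\pi_{j_m}(\Phi_m(x_m')))}{d(\Phi_m(x_m),\Phi_m(x_m'))}\in[\eps,1-\eps]\,.
\]
Since $I$ and $I'$ are finite, after passing to a subsequence I may assume $i_m\equiv i$ and $j_m\equiv j$ are constant, and $R_m\ra\infty$.

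Next I would blow down along $R_m$. The pointed spaces $(\hat G,R_m^{-1}d,x_m)$ converge, after a further subsequence, in the pointed Gromov-Hausdorff sense to the asymptotic cone $Y$ of $\hat G$, and $(\hat G',R_m^{-1}d,\Phi_m(x_m))$ to the asymptotic cone $Y'$ of $\hat G'$. Because $d_{\hat G}$ is the $\ell^2$-product of the factor metrics (and likewise $d_{\hat G'}$), $Y$ is the $\ell^2$-product $\prod_iY_i$ of the asymptotic cones $Y_i$ of the $\hat G_i$, and $Y'=\prod_jY_j'$; by \cite{pansu_croisssance} each $Y_i$, $Y_j'$ is bilipschitz homeomorphic to $G_i$, $G_j'$, and taking products of these maps gives bilipschitz homeomorphisms $\psi\colon Y\ra G:=\prod_iG_i$ and $\psi'\colon Y'\ra G':=\prod_jG_j'$ that respect the product structures, where $G$, $G'$ carry the $\ell^2$-sum of the factors' Carnot-Caratheodory metrics (which is bilipschitz to the Carnot-Caratheodory metric of $G$, $G'$ as Carnot groups). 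The $\Phi_m$ have uniform multiplicative constant $L$ and rescaled additive error $A/R_m\ra 0$, and are coarsely surjective, so they induce an $L$-bilipschitz homeomorphism $\Phi_\infty\colon Y\ra Y'$ taking basepoint to basepoint; hence $\tilde\Phi:=\psi'\circ\Phi_\infty\circ\psi^{-1}\colon G\ra G'$ is a bilipschitz homeomorphism, in particular quasiconformal, so Theorem~\ref{thm_main_product_corollary} applies and shows $|I|=|I'|$ and that $\tilde\Phi$, hence also $\Phi_\infty$, is a product of homeomorphisms for some bijection $\si\colon I\ra I'$. In particular, changing only the $i$-coordinate of a point of $Y$ changes only the $\si(i)$-coordinate of its image under $\Phi_\infty$.

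Now I would track the pair. Under the convergence, $x_m\ra\bar x$ (the basepoint of $Y$) and $x_m'\ra\bar x'$ with $d_Y(\bar x,\bar x')=\lim_m R_m^{-1}d(x_m,x_m')=1$, and since $\pi_k(x_m)=\pi_k(x_m')$ for $k\ne i$ we get $\pi_k(\bar x)=\pi_k(\bar x')$ for $k\ne i$; that is, $(\bar x,\bar x')$ is an $i$-pair in $Y$. By the previous step $\Phi_\infty(\bar x)$ and $\Phi_\infty(\bar x')$ differ only in the $\si(i)$-coordinate, so $d_{Y_j'}(\pi_j\Phi_\infty(\bar x),\pi_j\Phi_\infty(\bar x'))=0$ for $j\ne\si(i)$ and equals $d_{Y'}(\Phi_\infty(\bar x),\Phi_\infty(\bar x'))\ge L^{-1}>0$ for $j=\si(i)$. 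On the other hand, passing projections and distances to the Gromov-Hausdorff limit gives
\[
\lim_{m\ra\infty}\frac{d(\pi_j(\Phi_m(x_m)),\pi_j(\Phi_m(x_m')))}{d(\Phi_m(x_m),\Phi_m(x_m'))}=\frac{d_{Y_j'}(\pi_j\Phi_\infty(\bar x),\pi_j\Phi_\infty(\bar x'))}{d_{Y'}(\Phi_\infty(\bar x),\Phi_\infty(\bar x'))}\in\{0,1\}\,.
\]
But the left-hand side is a limit of numbers in $[\eps,1-\eps]$, hence lies in $[\eps,1-\eps]$, which contradicts $\eps>0$; this proves the lemma.

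The substantive input is Theorem~\ref{thm_main_product_corollary}, which is already available, so the main work here is organizational: arranging the blow-down so that the product structures of $\hat G$ and $\hat G'$ pass compatibly to the limit, ensuring that $\Phi_\infty$ is a genuine (not merely coarse) bilipschitz homeomorphism between the full Carnot groups so that Theorem~\ref{thm_main_product_corollary} applies, and upgrading ``the image of an $i$-pair is an approximate $j$-pair'' to an exact statement about $\Phi_\infty$. The one point requiring care is that \cite{pansu_croisssance} identifies the asymptotic cone of each factor with its model Carnot group only up to bilipschitz homeomorphism, and the $\ell^2$-product metric on $\prod_iG_i$ is only bilipschitz (not equal) to the Carnot-Caratheodory metric of the product Carnot group; both are harmless since quasiconformality and the conclusion of Theorem~\ref{thm_main_product_corollary} are bilipschitz invariants.
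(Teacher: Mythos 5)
Your proof is correct and follows essentially the same route as the paper's own argument: contradiction, rescaling by $R_m^{-1}$, pointed Gromov--Hausdorff blow-down to an $L$-bilipschitz homeomorphism between the product Carnot groups, application of Theorem~\ref{thm_main_product_corollary} to conclude the limit map is a product, and then tracking the limit of the $i$-pair to contradict the $[\eps,1-\eps]$ constraint. The paper's proof is terser (it normalizes $x_k=e$, $\Phi_k(x_k)=e$ by translation rather than carrying basepoints, and phrases the endgame as ``the image is not a $j$-pair for any $j$'') but the steps and the substantive input are identical.
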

\begin{proof}
If not, then for some $L$, $A$, $\eps$, there is a sequence of $(L,A)$-quasi-isometries $\{\Phi_k:\hat G\ra \hat G'\}$, and for some $i\in I$ a sequence of $i$-pairs $\{x_k,y_k\}\subset \hat G$ such that $d(x_k,x_k')\ra \infty$, and for every $k$ 
\begin{equation}
\label{eqn_not_almost_i_pair}
\frac{d(\pi_j(\Phi_k(x_k)),\pi_j(\Phi_k(x'_k)))}{d(\Phi_k(x_k),\Phi_k(x'_k))}\in
[\eps,1-\eps]\,.
\end{equation}
By pre/postcomposing with translations, we may assume that $x_k=e$ and $\Phi_k(x_k)=e$ for all $k$.  Letting $R_k:=d(x_k,y_k)$, the maps $\Phi_k$ induce $(L,R_k^{-1}A)$-quasi-isometries $(\hat G,R_k^{-1}d_{\hat G})\ra (\hat G',R_k^{-1}d_{\hat G'})$.  Extracting a subsequential limit, we obtain an $L$-bilipschitz homeomorphism $\Phi_\om:G_\om=\prod_iG_i\ra \prod_jG_j'=G_\om'$, and by (\ref{eqn_not_almost_i_pair}) there is an $i$-pair $x_\om,y_\om\in G_\om$ whose image under $\Phi_\om$ is not a $j$-pair for any $j$.  This is a contradiction, since $\{G_i\}_{i\in I}$, $\{G_j'\}_{j\in I'}$ satisfy the assumptions of Theorem~\ref{thm_main_product_intro} and so $\Phi_\om$ is a product mapping.
\end{proof}

\bigskip
We now fix an $(L,A)$-quasi-isometry $\Phi:\hat G\ra \hat G'$.  

For $i\in I$, $R\in [0,\infty)$, let $P_{i,R}$ be the collection of $i$-pairs $x,x'\in G$ with $d(x,x')\geq R$.    Let $R_0:=2L^{-1}A$.  Since $\Phi$ is an $(L,A)$-quasi-isometry, the mapping $\Phi_{ij}:P_{i,R_0}\ra [0,1]$ given by 
$$
\Phi_{ij}(x,x'):=\frac{d(\pi_j(\Phi(x)),\pi_j(\Phi(x')))}{d(\Phi(x),\Phi(x'))}
$$
is well-defined.   Let $R_1=\max(R_0,R(L,A,\frac14))$ where $R(L,A,\frac14)$ is the constant from Lemma~\ref{lem_i_pair_far_apart}.  By that lemma we have a well-defined function
$$
\bar\Phi_{ij}:P_{i,R}\ra \{0,1\}$$
where
$$
\bar\Phi_{ij}(x,x')=
\begin{cases}
1\quad\text{if}\quad \Phi_{ij}(x,x')\in(\frac34,1]\\
0\quad\text{if}\quad \Phi_{ij}(x,x')\in[0,\frac14)\,.
\end{cases}
$$

\bigskip
\begin{lemma}
\label{lem_phi_ij_locally_constant}
There exists a constant  $R_2=R_2(L,A)<\infty$ with the following property.    If 
 $R\geq R_2$ then $\bar\Phi_{ij}(x_1,x'_1)=\bar\Phi_{ij}(x_2,x_2')$ for every  pair $(x_1,x_1'),(x_2,x_2')\in P_{i,R}$ with $d(x_1,x_2),d(x_1',x_2')\leq 1$.
\end{lemma}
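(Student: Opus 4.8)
The plan is to join the $i$-pairs $(x_1,x_1')$ and $(x_2,x_2')$ by a path of $i$-pairs along which the two points of the pair stay a definite distance apart, and then to show that $\bar\Phi_{ij}$ is forced to be constant along such a path once $R$ is large enough. First note that an $i$-pair $(x,x')$ is determined by its common coordinates $z=(\pi_k(x))_{k\neq i}\in\prod_{k\neq i}\hat G_k$ together with the ordered pair $(a,b)=(\pi_i(x),\pi_i(x'))\in\hat G_i\times\hat G_i$, and since all coordinates other than the $i$-th agree, $d(x,x')=d_{\hat G_i}(a,b)$. Writing $(x_1,x_1')\leftrightarrow(z_1,a_1,b_1)$ and $(x_2,x_2')\leftrightarrow(z_2,a_2,b_2)$, the hypotheses give $d_{\hat G_i}(a_m,b_m)\geq R$ for $m=1,2$, while $d(x_1,x_2),d(x_1',x_2')\leq 1$ forces $d(z_1,z_2)\leq 1$ and $d_{\hat G_i}(a_1,a_2),d_{\hat G_i}(b_1,b_2)\leq 1$.

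Next I would build the connecting path in two stages. In the first stage, keeping $(a,b)=(a_1,b_1)$ fixed, move $z$ from $z_1$ to $z_2$ along a continuous path in the path-connected space $\prod_{k\neq i}\hat G_k$; throughout this stage the two points of the $i$-pair stay at distance $d_{\hat G_i}(a_1,b_1)\geq R$. In the second stage, keeping $z=z_2$ fixed, move $(a,b)$ from $(a_1,b_1)$ to $(a_2,b_2)$ inside $\hat G_i\times\hat G_i$; here I would apply Lemma~\ref{lem_pairs_connected}(2) to $X=\hat G_i$, which is legitimate since the asymptotic cone $G_i$ of $\hat G_i$ is nonabelian, so $\hat G_i$ is neither isometric to $\R$ nor the Cayley graph of a virtually cyclic group. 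That lemma provides $\ul R_i<\infty$ and $\la_i\in(0,1)$ such that, when $R\geq\ul R_i$, the pairs $(a_1,b_1)$ and $(a_2,b_2)$ (both at distance $\geq R$ in $\hat G_i$) lie in a single path component of $P_{\la_i R}$, i.e.\ are joined by a path along which $d_{\hat G_i}(a,b)\geq\la_i R$. Concatenating, and putting $\la:=\min_k\la_k$, $\ul R':=\max_k\ul R_k$, I obtain a path $t\mapsto(\ga(t),\ga'(t))$, $t\in[0,1]$, of $i$-pairs from $(x_1,x_1')$ to $(x_2,x_2')$ with $d(\ga(t),\ga'(t))\geq\la R$ for all $t$, provided $R\geq\ul R'$.

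Finally I would run the following estimate along this path. Assume in addition that $\la R\geq R(L,A,\tfrac14)$, the constant of Lemma~\ref{lem_i_pair_far_apart}; then that lemma gives $\Phi_{ij}(\ga(t),\ga'(t))\in[0,\tfrac14)\cup(\tfrac34,1]$ for every $t$. Since $\Phi$ need not be continuous, I would subdivide $[0,1]$ as $0=t_0<\dots<t_m=1$ finely enough that $d(\ga(t_{\ell-1}),\ga(t_\ell))\leq\de$ and $d(\ga'(t_{\ell-1}),\ga'(t_\ell))\leq\de$ for all $\ell$, with $\de>0$ to be chosen. Combining the $(L,A)$-quasi-isometry inequality, the $1$-Lipschitz property of the coordinate projections, the triangle inequality applied separately to the numerator and denominator of $\Phi_{ij}$, and the lower bound $d(\Phi(\ga(t_\ell)),\Phi(\ga'(t_\ell)))\geq L^{-1}\la R-A$, one obtains for every $\ell$ the elementary bound
$$
\bigl|\Phi_{ij}(\ga(t_{\ell-1}),\ga'(t_{\ell-1}))-\Phi_{ij}(\ga(t_\ell),\ga'(t_\ell))\bigr|\ \leq\ \frac{4(L\de+A)}{L^{-1}\la R-A}\,.
$$
If $R$ is large enough that $L^{-1}\la R-A>9A$, the right side is $<\tfrac12$ once $\de$ is chosen small (depending on $L,A,R$). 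Then, since consecutive values $\Phi_{ij}(\ga(t_\ell),\ga'(t_\ell))$ differ by less than $\tfrac12$ and each lies in $[0,\tfrac14)\cup(\tfrac34,1]$, they all lie in the same one of these two intervals; hence $\bar\Phi_{ij}(\ga(t_\ell),\ga'(t_\ell))$ is independent of $\ell$, and in particular $\bar\Phi_{ij}(x_1,x_1')=\bar\Phi_{ij}(x_2,x_2')$. Taking $R_2(L,A)$ to be any value exceeding $\ul R'$, $\la^{-1}R(L,A,\tfrac14)$, and $9L\la^{-1}A$ makes all of the above hold. The main obstacle is precisely that $\Phi$ is only a quasi-isometry, not continuous, so an intermediate-value argument along the path is unavailable; the resolution is the observation that along $i$-pairs at distance $\gtrsim R$ the denominator $d(\Phi(x),\Phi(x'))$ has size $\gtrsim R$, so the additive constant $A$ perturbs the ratio $\Phi_{ij}$ by only $O(A/R)$, harmless for large $R$. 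A minor secondary point is checking that Lemma~\ref{lem_pairs_connected}(2) applies to each factor $\hat G_i$ rather than only to a metric space appearing verbatim in its hypotheses, which holds since $\hat G_i$ is a simply connected nilpotent Lie group or a Cayley graph whose asymptotic cone $G_i$ is nonabelian.
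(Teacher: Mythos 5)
Your argument is correct, but you have overshot the lemma considerably. The hypothesis $d(x_1,x_2),d(x_1',x_2')\leq 1$ is precisely what makes this lemma a one--step estimate, and the paper uses it directly: since $\Phi$ is an $(L,A)$-quasi-isometry, $d(\Phi(x_1),\Phi(x_2)),\,d(\Phi(x_1'),\Phi(x_2'))\leq L+A$, and then a single triangle inequality on the numerator and denominator of $\Phi_{ij}$ (together with the $\gtrsim R$ lower bound on the denominator) gives $|\Phi_{ij}(x_1,x_1')-\Phi_{ij}(x_2,x_2')|\lesssim (L+A)/R < \tfrac14$ once $R\geq R_2(L,A)$. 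Since $[0,\tfrac14)$ and $(\tfrac34,1]$ are separated by more than $\tfrac14$, the two values of $\bar\Phi_{ij}$ must coincide. No path, no appeal to Lemma~\ref{lem_pairs_connected}, no subdivision.

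What you have instead done is prove the \emph{stronger} statement that $\bar\Phi_{ij}$ is constant on all of $P_{i,R}$, by building a chain of $i$-pairs via Lemma~\ref{lem_pairs_connected}(2) and propagating the estimate along it. That is exactly the argument the paper runs \emph{after} Lemma~\ref{lem_phi_ij_locally_constant}, where the local estimate plus path-connectedness of $P_{i,R_3}$ yields global constancy. So your proposal is logically valid (the stronger statement trivially implies the lemma), but it fuses the lemma with its intended follow-up, obscuring the modular structure: the paper deliberately isolates the ``cheap'' local estimate so that the geometric input (Lemma~\ref{lem_pairs_connected}) is invoked exactly once, at the outer level. Your fusion also forces you to re-derive the ratio estimate in a more awkward form (along a path, with a mesh $\de$) rather than once between two fixed nearby pairs. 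Two small technical remarks on your version: first, you never really use the $\leq 1$ hypothesis, which should have been a signal that you were proving more than was asked; second, when some $\hat G_k$ is a finitely generated nilpotent group with a word metric, it is discrete, so one cannot take the mesh $\de$ arbitrarily small --- one should either work in the Cayley graph or fix $\de=1$ and absorb it into the choice of $R_2(L,A)$, which your estimate does permit but your phrasing (``$\de$ chosen small depending on $L,A,R$'') does not quite say.
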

\begin{proof}  We have 
$$
d(\Phi(x_1),\Phi(x_2)),d(\Phi(x_1'),\Phi(x_2'))\leq L+A\,,
$$
which implies
\begin{align*}
|\Phi_{ij}(x_1,x_1')-\Phi_{ij}(x_2,x_2')|\leq \frac{2(L+A)}{L(R-1)-2A}
\end{align*}
which is $<\frac14$ when $R>R_2=R_2(L,A)$.  The distance between the intervals $[0,\frac14)$ and $(\frac34,1]$ is $>\frac14$, and therefore $\bar\Phi_{ij}(x_1,x'_1)=\bar\Phi_{ij}(x_2,x_2')$ as claimed. 
\end{proof} 

\bigskip
Combining Lemma~\ref{lem_phi_ij_locally_constant} with Lemma~\ref{lem_pairs_connected}, we get that for $R_3=R_3(L,A)>R_2$, for every $i\in I$, the subset $P_{i,R_3}\subset P_{i,R_2}$ lies in a single path component, and hence $\bar\Phi_{ij}$ is constant on $P_{i,R_3}$.  Applying Lemma~\ref{lem_i_pair_far_apart} to an $i$-pair $(x,x')$ with $d(x,x')$ large, we see that for every $i$ there is a $\si(i)\in I'$ such that  in $P_{i,R_3}$ we have $\bar\Phi_{i\si(i)}\equiv 1$ and $ \bar\Phi_{ij}\equiv 0$ for $j\in I'\setminus\si(i)$.  If $\Phi':\hat G'\ra \hat G$ is an $(L',A')$-quasi-inverse of $\Phi$, where $L'=L'(L,A)$, $A'=A'(L,A)$, then we argue similarly to define  $\Phi'_{ij}$, $\bar\Phi'_{ij}$, $R_3'$, and $\si':I'\ra I$.  Again taking an $i$-pair $(x,x')$ with $d(x,x')$ large, we can find a $\si(i)$-pair $(y,y')\subset P'_{\si(i),R_3'}$ with $y:=\Phi(x)$, $d(y',\Phi(x'))\ll d(\Phi(x),\Phi(x'))$, and applying Lemma~\ref{lem_i_pair_far_apart} we get that $\si'(\si(i))=i$.  Similarly $\si\circ\si'=\id_{I'}$, so $\si$ is a bijection.  Now (\ref{eqn_asymptotically_product}) follows from Lemma~\ref{lem_i_pair_far_apart}.

\bigskip\bigskip
\subsection{Proof of Corollary~\ref{cor_factors_are_qi_intro}}
\label{subsec_factors_are_qi}
Let $\Phi$, $\si$, etc be as in Theorem~\ref{thm_qis_between_products_intro}.  Let $\Phi':\hat G'\ra \hat G$ be a quasi-inverse of $\Phi$, i.e. $\Phi'$ is an $(L,A_1)$-quasi-isometry such that $d(\Phi'\circ\Phi,\id_{\hat G})$, $d(\Phi\circ\Phi',\id_{\hat G'})<A_1$, where $A_1=A_1(L,A)$.   

Choose $i\in I$ and for every $j\neq i$ pick some $x_j\in \hat G_j$.  Now define $\al:\hat G_i\ra \hat G$ by $\pi_j(\al(x))=x_j$ for $j\neq i$, and $\pi_i(\al(x))=x$, and let $\phi:\hat G_i\ra \hat G'_{\si(i)}$ be the composition $\phi:=\pi_{\si(i)}\circ\Phi\circ\al$.  It follows from (\ref{eqn_asymptotically_product}) that $\phi$ is an $(L,A_2)$ quasi-isometric embedding for $A_2=A_2(L,A)$.  We will show that $\phi$ is a quasi-isometry.

Pick $y_0'\in \hat G'_{\si(i)}$.  Now let $y_1\in \hat G_i$ satisfy $d(\phi(y_1),y_0')\leq 2\rho$ where $\rho:=\inf_{x\in \hat G_i}d(\phi(y),y_0')$.  Let $x_0'\in \hat G'$ be the point with $\pi_{\si(i)}(x_0')=y_0'$, $\pi_j(x_0')=\pi_j(\Phi(\alpha(y_1)))$        
  for all $j\in I'\setminus\{\si(i)\}$.  Let $y_2:=\pi_i(\Phi'(x_0'))$.  Now
\begin{align*}
\rho&\leq d(\phi(y_2),y_0')\leq d(\Phi\circ\al(y_2),x_0')\\
&\leq d(\Phi\circ\al(y_2),\Phi(\Phi'(x_0')))+d(\Phi(\Phi'(x_0')),x_0')\\
&\leq Ld(\al(y_2),\Phi'(x_0'))+A+A_1\,.
\end{align*}
Applying Theorem~\ref{thm_qis_between_products_intro} to $\Phi'$, it follows from (\ref{eqn_asymptotically_product}) that 
$$
\rho^{-1}\,d(\Phi'(x_0'),\al(y_2))\leq \eps_1(\rho)\,,
$$
where $\eps_1=\eps_1(L,A)$, and $\eps_1(t)\ra 0$
     as   $t\ra \infty$.
Hence $\rho\leq A_3=A_3(L,A)$.  Thus $\phi$ is an $(L,A_4)$-quasi-isometry.  The index $i\in I$ was arbitrary, so the corollary follows.

\bigskip
\subsection{An alternative approach using currents}
\label{subsec_currents_proof}
We now give a sketch of an alternative approach to rigidity of products.

Let $G:=\prod_{i}G_i$,  $G':=\prod_{j}G'_j$, where $\{G_i\}_{1\leq i\leq n}$, $\{G_j'\}_{1\leq j\leq n'}$ are as in Theorem~\ref{thm_main_product_intro}, and let $\nu$ be the homogeneous dimension of $G$. 

\begin{theorem}
\label{thm_product_rigidity_currents}
Let $F:U\ra U'$ be a quasisymmetric  homeomorphism, where $U\subset G$, $U'\subset G'$ are open subsets.  Then locally $F$ is a product of homeomorphisms.  \end{theorem}

\begin{remark}
We may relax the assumptions on $F$.  For instance, in the proof below it would suffice to assume that:
\bit
\item $F:U\ra U'$ is a $W^{1,p}$-mapping for some $p>\max_i\nu_i$.  Here $\nu_i$ denotes the homogeneous dimension of $G_i$. 
\item $F$ is a homeomorphism.
\item  The (approximate) Pansu differential is an isomorphism almost everywhere.  
\eit 
\end{remark}

\bigskip
We will give only a rough sketch of the argument.  For simplicity, we discuss the special case of a quasisymmetric homeomorphism $F:\H\times\H\ra \H\times \H$.

For $z\in \H$, let $F_z:\H\ra\H\times \H$ be the ``horizontal slice of $F$ at $z$'', i.e. $F_z(x)=F(x,z)$.  Since $F$ is a homeomorphism, it follows that $F_z$ is a proper embedding for every $z$.  By Lemma~\ref{lem_isom_is_product} the Pansu differential $D_PF(x)$ is a product almost everywhere.  Since $F\in W^{1,p}_{\loc}$ for some $p>\nu$, by  Fubini's theorem,  $F_z$ is a  $W^{1,p}_{\loc}$-mapping for a.e. $z\in \H$, and for such $z$, the Pansu differential $D_PF_z(x):\H\ra \H\times\H$ has image equal to one of the two subspaces 
\begin{equation}
\label{eqn_im_horiz_vert}
\fh\oplus\{0\}, \{0\}\times\fh\subset\fh\times\fh
\end{equation}
 for a.e. $x\in \H$. 

The heart of the proof is the following rigidity property of  ``horizontal-vertical'' mappings:
\begin{proposition}[Zig-zag rigidity]
\label{prop_zig_zag_rigidity}
Let $f:\H\ra\H\times\H$ be a continuous proper topological embedding in $W^{1,p}_{\loc}$, $p>4$, and assume that for a.e. $x\in \H$, the image of the Pansu differential $D_Pf(x):\fh\ra\fh\oplus\fh$ satisfies
\begin{equation}
\label{eqn_image_horiz_vert}
\Im(D_Pf(x)) \in \{\fh\oplus\{0\}\,,\,\{0\}\oplus\fh\}\,.
\end{equation}   
Then $\pi_i\circ f:\H\ra\H$ is constant for one of $i\in \{1,2\}$.
\end{proposition}

Applying the proposition with $f:=F_z$, we conclude that $\pi_{i_z}\circ F_z$ is constant for a unique $i_z\in \{1,2\}$.  By continuity, it follows that for every $z\in \H$ and some $i\in\{1,2\}$  independent of $z$, the composition $\pi_{i}\circ F_z$ is constant.  Repeating the argument for vertical slices of $F$ and using the fact that $F$ is a homeomorphism, completes the proof of Theorem~\ref{thm_product_rigidity_currents}.

We now sketch the proof of Proposition~\ref{prop_zig_zag_rigidity}.

Fix an orientation on $\H$, and let $T$ denote the current of integration on $\H$.  This is the de Rham current defined by the formula
$$
\langle T,\om\rangle :=\int_\H\om\,,\quad\forall \om\in\Om^3_c(\H)\,.
$$
Note that $T$ extends continuously to compactly supported forms in $L^1_{\loc}$. 
Since $f$ is a proper mapping, we may use the Pansu pullback to define a pushforward current $T':=(f_P)_\sharp(T)$
$$
\langle T',\om\rangle:=\int_\H f_P^*\om\,.
$$
The condition (\ref{eqn_image_horiz_vert})  implies that $T'$ decomposes as a sum of currents $T'=T'_1+T'_2$, where $T'_i$ has the form $Z_i \mu_i$ where $Z_i$ is the $3$-vector tangent to the $i$-th summand of $\fh\oplus\fh$, and $\mu_i$ is a signed Radon measure.

Using the Pullback Theorem, and an argument ``dual'' to Step 2 of the proof given in Subsection~\ref{subsec_proof_product_rigidity}, it follows that $T'_i$ is closed.  This implies that $\mu_i$ is a product of Haar measure on the $i^{th}$ factor with a Radon measure on the complementary factor.  Consequently, the support  $\spt(T')$ is a union of horizontal and vertical slices.  Letting $S$ be one such slice, since $S\subset\Im f$ and $S$, $\Im f$ are both closed subsets of $\H\times\H$ homeomorphic to $\R^3$, by invariance of domain we have $S=\Im f$.  Hence $\spt(T')=S$, and  the proposition follows.

\section{Complexified        Carnot groups}\label{sec_complex_heisenberg_setup}

\mbox{}
In this section we study mappings between complexified Carnot groups, the complex Heisenberg groups being prime examples.  In Subsection~\ref{subsec_local_results} we discuss local results, in Subsection~\ref{se:global_qc_complex} global rigidity, and in Subsection~\ref{subsec_flexibility_complex_heisenberg_groups} we give examples of mappings exhibiting flexibility.

\bigskip

\bigskip
\subsection{Setup}
We first recall some facts and notation pertaining to complex structures and complexification (see \cite[Chapter IX]{kobayashi_nomizu_vol_II}).

Recall that the complexification $V_\C$ of a real vector space $V$ is the tensor product $V\otimes_\R\C$, and that complexification has obvious compatibilities with tensor/wedge products and complex conjugation:
\begin{align*}
(V\otimes_\R W)_\C&=(V\otimes_\R W)\otimes_\R\C\simeq (V_\C)\otimes_\C(W_\C)\\
\ol{v\otimes w}&=\ol{v}\otimes\ol{w}\,,\qquad v\in V_\C\,,w\in W_\C\,.
\end{align*}
If $M$ is a manifold, then     we    may complexify tensor bundles of $M$ and their spaces of sections, in particular the algebra $\Om^*(M)$ of differential forms, and then pullback and exterior derivative are $\C$-linear and real (i.e. they commute with complex conjugation).  Similarly for Pansu pullback, since it is based on an algebra homomorphism of exterior algebras. 

If $M$ has an almost complex structure $J$, then the complex tangent bundle $T^cM$ decomposes as a direct sum $T^{1,0}M\oplus T^{0,1}M$ of eigenspaces of $J$ corresponding to eigenvalues $i$ and $-i$ respectively.  Similarly, the complex $k$-forms decompose as a direct sum $\oplus_{p+q=k}\Om^{p,q}M$, where $\Om^{p,q}M$ is the space of forms of type $(p,q)$.  If $\al_1,\ldots,\al_n\in \Om^{1,0}M$ is a local basis for the $(1,0)$-forms, then the set of forms $$\al_{j_1}\we\ldots\we\al_{j_p}\we\bar\al_{k_1}\we\ldots\we\bar\al_{k_q}$$ for $1\leq j_1<\ldots<j_p\leq n$, $1\leq k_1<\ldots<k_n\leq n$, form a local basis for $\Om^{p,q}M$.

\bigskip
Now let $H$ be a Carnot group of topological dimension $N$ and homogeneous dimension $\nu$. Let $\fh$ be the  corresponding  Carnot algebra. 
Let $\fg$ denote  the complexified Carnot algebra, i.e. $\fg = \fh^\C$ equipped with the grading $\fg=\oplus_jV_j^\C$.
  The corresponding Carnot group $G$ has topological dimension $2N$ and homogeneous dimension $2\nu$.  We now denote by $J$ the almost complex structure on $G$ coming from $\fg$;  it follows from the Baker-Campbell-Hausdorff formula that $J$ is integrable, i.e. $(G,J)$ is a complex manifold, and the group operations are holomorphic.  Also, complex conjugation $\fg\ra \fg$ is induced by a unique graded automorphism $G\ra G$, since $G$ is simply-connected.

\bigskip

\subsection{Local results}
\label{subsec_local_results}
Retaining the notation above, we now consider a connected open subset $U\subset G$, and a map $f:U\ra G$   in the Sobolev space $W^{1,p}(U,G)$ with $p >2\nu$.

Our main local result is:

\begin{theorem}
\label{thm_qc_complex_heisenberg_holo_antiholo}
Suppose the Pansu differential $D_Pf(x)$ is either a $J$-linear isomorphism or a $J$-antilinear isomorphism for a.e. $x\in U$.  Then $f$ is holomorphic or antiholomorphic (with respect to the complex structure $J$).
\end{theorem}

\begin{corollary}
\label{cor_complexified_holo_antiholo}
Suppose any graded isomorphism $\fg\ra \fg$ is either $J$-linear or $J$-antilinear, and the Pansu differential $D_Pf(x)$ is an isomorphism for a.e. $x\in U$.  Then $f$ is holomorphic or antiholomorphic (with respect to the complex structure $J$).  In particular, any quasisymmetric homeomorphism is $J$-holomorphic or $J$-antiholomorphic.
\end{corollary}
Theorem~\ref{thm_qc_complex_heisenberg_holo_antiholo_intro} follows from Corollary~\ref{cor_complexified_holo_antiholo}, since the complex $m^{th}$ Heisenberg group satisfies the assumptions of Corollary~\ref{cor_complexified_holo_antiholo}  by 
\cite{reimann_ricci}, Section 6 (see also
 Lemma 5.6 in \cite{KMX2}).  Other examples      including $H$ type groups (with odd  dimensional center)   are discussed in Lemma 5.7 in \cite{KMX2}   and the paragraph after that.

The assumption that the Pansu derivative is an isomorphism a.e.\ cannot be dropped. After the end of the proof we give an example
of a Lipschitz map $f:\H^\C\ra\H^\C$  from the complexified first Heisenberg group  to itself for which the Pansu differential is a.e. $J$-linear  or $J$-antilinear,
but which is neither holomorphic nor antiholomorphic.

\begin{corollary}
\label{cor_complex_heisenberg_bilipschitz} Let $f:G\supset U\ra G$ be as in Theorem~\ref{thm_qc_complex_heisenberg_holo_antiholo}, where $U=B(p,r)$ for some $p\in G$, $r>0$.  If $f$ is an $\eta$-quasisymmetric homeomorphism, then:
\bit
\item $f$ is either $J$-biholomorphic or $J$-antibiholomorphic.
\item Modulo post-composition with a Carnot dilation, the restriction of $f$ to the subball $B(p,\frac{r}{K})$ is $K$-bilipschitz, where $K=K(\eta)$.
\eit
\end{corollary}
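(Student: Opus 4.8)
The plan is to deduce both assertions from Theorem~\ref{thm_qc_complex_heisenberg_holo_antiholo}, combined with standard properties of quasisymmetric homeomorphisms and the interior (Cauchy) estimates for holomorphic maps.

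\emph{First assertion.} By Theorem~\ref{thm_qc_complex_heisenberg_holo_antiholo}, $f$ is holomorphic or antiholomorphic on the connected open set $U$; say holomorphic (the antiholomorphic case is identical). Its image $f(U)$ is open by invariance of domain, and the inverse $f^{-1}:f(U)\ra U$ is $\eta'$-quasisymmetric for some $\eta'=\eta'(\eta)$, hence (by Theorem~\ref{thm_properties_qc_homeomorphisms}) lies in $W^{1,p'}_{\loc}(f(U),G)$ for some $p'>2\nu$, with $D_Pf^{-1}$ an isomorphism a.e.; applying Theorem~\ref{thm_qc_complex_heisenberg_holo_antiholo} on compactly contained connected subdomains (where $f^{-1}\in W^{1,p'}$ by Gehring's lemma) shows that $f^{-1}$ is holomorphic or antiholomorphic on $f(U)$ — globally one or the other, since a non-constant homeomorphism cannot be simultaneously holomorphic and antiholomorphic on any open set. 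Since the composite of a holomorphic with an antiholomorphic map is antiholomorphic whereas $f\circ f^{-1}=\id_{f(U)}$ is holomorphic and non-constant, $f^{-1}$ must be holomorphic; thus $f$ is $J$-biholomorphic.

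\emph{Normalization for the second assertion.} Left translations of $G$ and Carnot dilations $\de_t:G\ra G$ are holomorphic automorphisms of $G$ which multiply $d_{CC}$ by a constant, hence preserve the $\eta$-quasisymmetry class and, up to composition with a further left translation or dilation, the bilipschitz constant of a map. Replacing $f$ by $\de_{1/s}\circ\ell_{f(p)^{-1}}\circ f\circ\ell_p\circ\de_r$ for a suitable $s=s(f)>0$, I may therefore assume $p=e$, $f(e)=e$, $U=B(e,1)$, and $\sup_{x\in\overline B(e,1/2)}d_{CC}(e,f(x))=1$. Comparing the point $x_0$ realizing this supremum with points of $\partial B(e,1/2)$ via the quasisymmetry inequality shows $d_{CC}(e,x_0)\geq c_0(\eta)>0$; feeding this back into quasisymmetry gives, on a ball $B(e,\rho_0)$ with $\rho_0=\rho_0(\eta)\in(0,1/2)$, the two-sided control
\begin{equation*}
B(e,c_1(\eta))\subset f(B(e,\rho_0))\subset B(e,C_1(\eta))\,,
\end{equation*}
the left inclusion being the standard "roundness of images of balls under quasisymmetric maps".

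\emph{Bilipschitz estimate.} Fix a holomorphic chart identifying a neighborhood of $e$ in $G$ with a neighborhood of $0$ in $\C^N$ (e.g.\ $\exp$ followed by a $\C$-linear identification $\fg\simeq\C^N$). On $B(e,\rho_0)$ the map $f$ is holomorphic with coefficients bounded by $C_1(\eta)$, so by the Cauchy estimates $f$ and all its Euclidean derivatives are bounded on $B(e,\rho_0/2)$ by a constant depending only on $\eta$; since $D_Pf$ is a polynomial in these derivatives, $f$ is Lipschitz on $B(e,\rho_0/2)$ with constant $C_2(\eta)$. Applying the same reasoning to the holomorphic inverse $f^{-1}$ — which is bounded, with image in $B(e,\rho_0)$, on $B(e,c_1(\eta))$ — gives that $f^{-1}$ is $C_3(\eta)$-Lipschitz on some $B(e,c_2(\eta))$. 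Choosing $\de:=\min(\rho_0/2,\ c_2/C_2)$ so that $f(B(e,\de))\subset B(e,c_2)$, both $f$ and $f^{-1}$ are Lipschitz with $\eta$-controlled constants on $B(e,\de)$, so $f$ is $K_0(\eta)$-bilipschitz there. Setting $K:=\max(K_0(\eta),1/\de(\eta))$ and undoing the normalization (each auxiliary map being a left translation or a Carnot dilation, so at worst the conclusion changes by post-composing with a single dilation) yields that $\de_{1/s'}\circ f$ is $K$-bilipschitz on $B(p,r/K)$ for a suitable $s'$, which is the assertion.

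\emph{Main obstacle.} The normalization bookkeeping and the Cauchy estimates are routine; the one point that genuinely needs care is the \emph{lower} bilipschitz bound. A bounded holomorphic map can have arbitrarily degenerate derivative, so this bound cannot come from interior estimates for $f$ alone — it must pass through $f^{-1}$, which is why the argument relies on the $J$-biholomorphicity established in the first part together with the quasisymmetric control of the image $f(B(e,\rho_0))$.
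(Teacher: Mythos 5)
Your proposal is correct and follows essentially the same strategy as the paper: establish that $f^{-1}$ is also holomorphic by invoking Theorem~\ref{thm_qc_complex_heisenberg_holo_antiholo} for the inverse quasisymmetric homeomorphism, then obtain Lipschitz bounds for both $f$ and $f^{-1}$ from quasisymmetric two-sided ball control combined with interior estimates for holomorphic maps (your Cauchy-estimate paragraph is a paraphrase of the paper's Lemma~\ref{le:holomorphic_lipschitz}). The one genuinely different step is how you rule out the antiholomorphic option for $f^{-1}$: you argue that $f\circ f^{-1}=\id$ is holomorphic and nonconstant, so $f^{-1}$ cannot be antiholomorphic; the paper instead notes that $f^{-1}$ is visibly a holomorphic diffeomorphism on the dense open set $f(U)\setminus f(Z)$, where $Z$ is the vanishing locus of the Jacobian. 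Both are short and valid, and buy nothing over each other.
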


 We need the following lemma for the proof of Corollary \ref{cor_complex_heisenberg_bilipschitz}.

\begin{lemma}  \label{le:holomorphic_lipschitz}
Let  $G$ be a complexified Carnot group and suppose that $f : G \supset B(p, r) \to B(p', \rho) \subset G$ is  a holomorphic contact map.
Then the restriction of $f$ to the ball $B(p, r/4)$ is $K_G \frac{\rho}{r}$-Lipschitz where $K_G$ depends only on the group $G$.
\end{lemma}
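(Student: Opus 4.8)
The plan is to reduce the assertion to a pointwise bound on the horizontal operator norm $\|D_Pf(z)|_{V_1}\|$ of the Pansu differential, and to deduce that bound from the classical Cauchy estimate applied in exponential coordinates — after a scaling normalization of the target which is what makes the bound \emph{linear} in $\rho$.

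\textbf{Step 1 (normalization).} Since left translations are biholomorphic $d_{CC}$-isometries and the Carnot dilations $\de_\lambda$ are biholomorphic graded automorphisms scaling $d_{CC}$ by $\lambda$, pre- and post-composing $f$ with $\ell_p$, $\ell_{f(p)^{-1}}$, $\de_r$, $\de_{1/r}$ reduces the statement to the following scale-free form: \emph{if $g:B(e,1)\to B(e,R)$ is a holomorphic contact map with $g(e)=e$, then $g|_{B(e,1/4)}$ is $K_G R$-Lipschitz} (one then recovers the lemma with $R\le 2\rho/r$ and a slightly larger $K_G$). Moreover it suffices to prove $\|D_Pg(z)|_{V_1}\|\le C_G R$ for every $z\in B(e,3/4)$: given this, any $x,y\in B(e,1/4)$ can be joined by a $d_{CC}$-geodesic $\ga$ lying in $\overline{B(e,3/4)}$, and since $g$ is $C^1$ and contact, $g\circ\ga$ is a horizontal path of length $\le \big(\sup_{\overline{B(e,3/4)}}\|D_Pg|_{V_1}\|\big)\cdot\length(\ga)\le C_G R\, d_{CC}(x,y)$, whence $g|_{B(e,1/4)}$ is $C_G R$-Lipschitz.

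\textbf{Step 2 (pointwise estimate via Cauchy).} Fix $z\in B(e,3/4)$ and set $h:=\de_{1/(2R)}\circ\ell_{g(z)^{-1}}\circ g\circ\ell_z$. Because $\ell_z(B(e,1/4))\subset B(e,1)$, $g(z)\in B(e,R)$, and $\de_{1/(2R)}$ scales distances by $1/(2R)$, the map $h$ is a holomorphic contact map $B(e,1/4)\to B(e,1)$ with $h(e)=e$, and the chain rule gives $D_Ph(e)|_{V_1}=\tfrac1{2R}\,D_Pg(z)|_{V_1}$; so it is enough to show $\|D_Ph(e)|_{V_1}\|\le C_G$. Now pass to exponential coordinates, identifying $G$ with $\fg\cong\C^N$ via $\log$ (for a simply connected nilpotent complex Lie group $\exp$ is a biholomorphism, by BCH), so that $e\leftrightarrow 0$ and $h$ becomes a holomorphic map $\hat h$ on an open set containing a Euclidean ball $B_{\mathrm{Eucl}}(0,c_G)$ — here $c_G>0$ depends only on $G$ since $B(e,1/4)$ is a $d_{CC}$-neighbourhood of $e$ and $d_{CC}$ induces the manifold topology — with $\hat h(0)=0$ and $|\hat h|\le C_G'$ on $B_{\mathrm{Eucl}}(0,c_G)$, because $\hat h$ takes values in $\log(\overline{B(e,1)})$, a compact (hence bounded) subset of $\C^N$. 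The Cauchy estimate for vector-valued holomorphic maps yields $\|d\hat h(0)\|_{\mathrm{Eucl}}\le C_N\,C_G'/c_G$; since $d\hat h(0)=dh(e)$ and, for a $C^1$ contact map, the restriction of the Pansu differential to $V_1$ coincides with the restriction of the ordinary differential, comparing the two fixed norms on $\fg$ gives $\|D_Ph(e)|_{V_1}\|\le C_G$. Combined with Step 1 this proves the lemma.

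\textbf{Main obstacle.} The delicate point — and the reason the bound is $K_G\rho/r$ with the \emph{first} power — is exactly the target rescaling by $\de_{1/(2R)}$ in Step 2. A Cauchy estimate performed directly in exponential coordinates on the ball $B(e,R)$ would only control $|g|$ by a constant times $\max(R,R^s)$, since the exponential coordinates of a point at $d_{CC}$-distance $t$ can have size $\sim t^s$ ($s=$ step), and this would be useless for large $\rho$. Dilating the target down to a unit ball before estimating removes this loss, and is legitimate precisely because the dilations $\de_\lambda$ are holomorphic contact automorphisms. Everything else (biholomorphy of $\exp$, the Cauchy estimate, comparability of the Euclidean and the adapted norms on the fixed space $\fg$ and on the bounded region $B(e,1)$, existence of $d_{CC}$-geodesics, and the identification of the horizontal parts of the Pansu and ordinary differentials) is standard.
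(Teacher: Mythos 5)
Your proof is correct and takes essentially the same route as the paper: normalize by left translation and Carnot dilation, bound the horizontal derivative via a Cauchy estimate in exponential coordinates on $B(e,3/4)$, and integrate along a horizontal path of length $<\tfrac12$. The paper normalizes both source and target to unit balls once at the start and then applies the Cauchy estimate on Euclidean balls $B^{\rm eucl}(q,r_G)\subset B(e,1)$ obtained by compactness, whereas you leave the target as $B(e,R)$ and rescale pointwise by $\de_{1/(2R)}$ at each $z$ — a cosmetic reorganization (and the reason your ``main obstacle'' remark is automatic in the paper's setup).
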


\begin{proof}
Since    Carnot      dilation and left translation are holomorphic  contact maps it suffices to show the result for $p=p'=e$ and $r=\rho=1$.   

Since the topology induced by $d_{CC}$ is the Euclidean topology (see Subsection ~\ref{subsec_carnot_groups}),  the ball $B(e,1)$  is contained in a Euclidean ball $B^{\rm eucl}(e, R_G)$ and by compactness there exists an $r_G > 0$ such that for every point  $ q \in \overline B(e, \frac34)$ 
we have
$B^{\rm eucl}(q, r_G) \subset B(e,1)$. 
Furthermore the  left invariant norm of 
horizontal    vector fields
   on $B(e,1)$ is comparable to 
the Euclidean norm.

 Thus by standard estimates for holomorphic maps $f: B^{\rm eucl}(q, r_G) \to B^{\rm eucl}(e,R_G)$
 we get 
$$  |Df(q)  X|_{\rm eucl}    \le C_N  R_G r_G^{-1}  |X|_{\rm eucl}  \quad \forall q \in B(e, \frac{3}{4})$$
where $N$ is the topological dimension of $G$.
Since the left invariant norm and the Euclidean norm of horizontal   vector fields
  on $B(e, \frac34)$  are equivalent we get
$$  |Df  X|    \le C_G  R_G r_G^{-1}  |X| $$
for all horizontal  vector   fields    
   $X$ in $B(e, \frac34)$.
Finally we use that any two points $q, q'$  in $B(e,\frac14)$ can be connected by a horizontal curve of length $< \frac12$. In particular this
curves stays
inside $B(e,\frac34)$.    
    Thus $d(f(q), f(q')) \le  C_G R_G r_G^{-1} d(q, q')$.
\end{proof}

\bigskip
\begin{proof}[Proof of Corollary~\ref{cor_complex_heisenberg_bilipschitz}]  
After composing $f$ with complex conjugation if necessary, we may assume by Theorem~\ref{thm_qc_complex_heisenberg_holo_antiholo} that $f$ is holomorphic.  

We claim that the Pansu differential of $f^{-1}$ is $J$-linear almost everywhere.  This follows from the fact that quasisymmetric homeomorphisms between open subsets of Carnot groups map sets of measure zero to sets of measure zero \cite[Section 7]{heinonen_koskela}, and that the inverse of a $J$-(anti)linear  automorphism is also $J$-(anti)linear.  Alternatively, since $f$ is a holomorphic homeomorphism, its Jacobian is nonzero away from a complex analytic subvariety $Z\subset U$, so $f(Z)\subset f(U)$ has measure zero.  Hence $f^{-1}$ is locally a holomorphic diffeomorphism on $f(U)\setminus f(Z)$, and so its Pansu differential is $J$-linear almost everywhere.   

Since $f^{-1}$ is $\eta$-quasisymmetric, by Theorem~\ref{thm_qc_complex_heisenberg_holo_antiholo} it is holomorphic, and hence both $f$ and $f^{-1}$ are biholomorphic; furthermore both maps are contact diffeomorphisms because they are Pansu differentiable almost everywhere.

Let $M = \eta(1)$. 
Since $f$ is $\eta$-quasisymmetric we may assume after postcomposing with a dilation that
$B(p', M^{-1} r) \subset  f(B(p,r/2)) \subset B(p',r)$ where $p' = f(p)$. 
Thus by Lemma~\ref{le:holomorphic_lipschitz} the restriction of $f$ to $B(p, r/8)$ is $ 2 K_G$-Lipschitz. 
Let $g$ be the restriction of $f^{-1}$ to $B(p', M^{-1} r)$. Then by Lemma~\ref{le:holomorphic_lipschitz} the restriction of $g$ to $B(p',  M^{-1}  r/4)$ is
$M K_G$-Lipschitz. Moreover $f(B(p, K_G^{-1} M^{-1} r/8))$ is contained in   the ball   $B(p',  M^{-1} r/4)$.  
       Thus the assertion holds with $ K = 8 M K_G $.
\end{proof}

\bigskip

\begin{proof}[Proof of Theorem~\ref{thm_qc_complex_heisenberg_holo_antiholo}]  
The main point in the proof is to show that the pullback of the top degree holomorphic form cannot oscillate between a holomorphic and anti-holomorphic form.   

For simplicity, we begin by proving the result for the complexified first Heisenberg group.   The general case is very similar, although it requires a few minor modifications which we discuss at the end. We first set up some notation. 
Let $\fg$ be the first complex Heisenberg algebra.  Since we will shortly be complexifying a second time,  we recall the convention to
denote the complex multiplication on $\fg$ by $J:\fg\ra \fg$. 
Let $\{X_j,Y_j=JX_j\}_{1\leq j\leq 3}$ be the basis for which $[X_1,X_2]=-X_3$, so
$$[X_1,X_2]=-X_3\,,\quad  [Y_1,X_2]=-Y_3
$$
$$
[X_1,Y_2]=-Y_3\,, \quad [Y_1,Y_2]=J^2(-X_3)=X_3\,,
$$
and all other brackets of basis vectors are zero.  Let $\{\al_j,\be_j\}_{1\leq j\leq 3}$ be the dual basis.   Then 
$$
d\al_3=\al_1\wedge\al_2-\be_1\wedge\be_2\,,\quad d\be_3=\be_1\wedge\al_2+\al_1\wedge\be_2\,.
$$
Now we complexify $\fg$, the exterior algebra, and differential forms.

We define  $(1,0)$ and $(0,1)$-forms
\begin{align*}
\zeta_j=\al_j+i\be_j\,,\quad\bar\zeta_j=\al_j-i\be_j
\end{align*}
and   $(1,0)$ and $(0,1)$-vector fields (the Wirtinger vector fields)   
\begin{align*}
Z_j = \frac12 (X_j -i Y_j)   \quad \bar Z_j = \frac12 (X_j + i Y_j)\,.  
\end{align*}
One gets:
\begin{align*}
\zeta_j(Z_k) &=  \bar \zeta_j(\bar Z_k) = \delta_{jk}, \quad \zeta_j(\bar Z_k) = \bar \zeta_j(Z_k) = 0,
\end{align*}
\begin{align*}
[Z_1, Z_2] &= - \frac12 Z_3, \quad [\bar Z_1, \bar Z_2] = -\frac12 \bar Z_3
\end{align*}
and
\begin{align*}
\al_j=\frac12(\ze_j+\bar\ze_j)\,,\quad \be_j=-\frac{i}{2}(\ze_j-\bar\ze_j)\\
d\ze_3=\ze_1\wedge\ze_2\,,\qquad d\bar\ze_3=\bar\ze_1\wedge\bar\ze_2\,.
\end{align*}
If $u$ is a smooth function
 then we get
\begin{align}
\label{eqn_differential_zs}
du&=\sum_j((X_ju)\al_j+(Y_ju)\be_j)\\
&=\sum_j((Z_ju) \ze_j+(\bar Z_ju) \bar \ze_j)\,.
\end{align}
We use the shortened notation 
$$
\ze_{123}=\ze_1\wedge\ze_2\wedge\ze_3\,,\qquad\ze_{\ol{123}}=\bar\ze_1\wedge\bar\ze_2\wedge\bar\ze_3\,,\text{etc}\,.
$$

The main point is to show that the Pansu differential 
      $D_Pf(x)$    is either $J$-linear for a.e. $x \in U$ or $J$-antilinear for a.e. $x \in U$. 
By assumption for a.e. $x \in U$ the Pansu differential is $J$-linear or $J$-antilinear.      If a graded automorphism $\Phi: \mathfrak g\ra \mathfrak g$  is $J$-linear,   then its complexification $\Phi_\C: \fg_C \ra \fg_C$ carries forms of type $(p,q)$ to forms of type $(p,q)$; in particular we have
$\Phi_\C^* \zeta_j \in \Span_\C\{\zeta_1, \zeta_2, \zeta_3\}$ and hence 
$$ \Phi_\C^*(\zeta_{123}) = a \zeta_{123} $$
where $a  \in \C$. 
If $\Phi$ is $J$-antilinear then
$$  \Phi_\C^*(\zeta_{123}) = a \zeta_{\ol{123}}. $$
Thus if  $f:G\supset U\ra U'\subset G$ is in $W^{1,p}_{\loc}$ for $p>8$ and its Pansu differential is an isomorphism a.e.,  then the complexification 
    $D^{\C}_Pf$   of the Pansu differential  $D_Pf$                 
satisfies 
$$
f_{P,\C}^*\ze_{123}=a_{123}\ze_{123}+a_{\ol{123}}\ze_{\ol{123}}
$$
for $a_{123}$, $a_{\ol{123}}$ measurable.
Note that:
\begin{align*}
d\ze_{123}=d\ze_{\ol{123}}=0\,,\qquad \wt(\ze_{123})=\wt(\ze_{\ol{123}})=-4\,,\\
d\ze_{23}=0\,,\qquad \wt(\ze_{23})=-3\,.
\end{align*}
By linearity the  Pullback Theorem   clearly extends to complex-valued forms if we use the complexification of the Pansu differential.
Applying Theorem~\ref{co:pull_back2} with $\alpha = \zeta_{123}$ and $\beta = \zeta_{23}$
we thus get
\begin{align*}
 0 =  &\int_U  f_{P, \C}^* \zeta_{123}  \wedge d(\varphi \zeta_{23})  = \int_U  a_{\ol{123}} \ze_{\ol{123}} \wedge d\varphi \wedge \zeta_{23} \\
 = & \int_U    a_{\ol{123}}   \, (Z_1 \varphi) \,  \ze_{\ol{123}} \wedge   \ze_{123}.
 \end{align*}
 Hence we get $ Z_1 a_{\ol{123}}=0$ distributionally, and similarly 
$$
 Z_2 a_{\ol{123}}= \bar Z_1a_{123}= \bar Z_2 a_{123}=0\,.
$$

By Lemma~\ref{lem_horizontal_hypoellipticity_1} below $a_{123}$ is holomorphic and $a_{\ol{123}}$ is antiholomorphic.
Now if $a_{123}$ vanishes on a set of positive measure then it vanishes identically since $U$ is connected. 
Since $D_P f(x)$ is an isomorphism for a.e.\ $x$ it follows that $D_P f$ is $J$-antilinear a.e.
On the other hand if $a_{123} \ne 0$ a.e. then $D_P f$ is $J$-linear a.e.

\begin{lemma}
\label{lem_horizontal_hypoellipticity_1}
Suppose $u$ is a $\C$-valued  distribution on $U \subset G$ satisfying
$$
\bar Z_1u=\bar Z_2u=0\,.
$$
Then $u$ is holomorphic. Likewise a  distribution $u$ with $Z_1u=Z_2u=0$ is antiholomorphic.
\end{lemma}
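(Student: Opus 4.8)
The plan is to upgrade the two given equations to the full antiholomorphic Cauchy--Riemann system for $u$ and then conclude by ellipticity. Recall that $G$, being the complexification of $\H_1$, is a complex Lie group of complex dimension $3$ with complex structure $J$; left translations are biholomorphic, so the left-invariant vector fields $Z_1,Z_2,Z_3$ span the holomorphic tangent bundle $T^{1,0}G$ pointwise, the fields $\bar Z_1,\bar Z_2,\bar Z_3$ span $T^{0,1}G$, and the six real left-invariant vector fields $X_1,Y_1,X_2,Y_2,X_3,Y_3$ form a basis of $\fg$ and hence span $TG$ at every point.

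\textbf{The bracket step and the elliptic operator.} First I would run the bracket step: from the relations recorded above, $\bar Z_3=-2[\bar Z_1,\bar Z_2]$ as an identity of left-invariant differential operators, so applying it to the distribution $u$ and using $\bar Z_1u=\bar Z_2u=0$ gives $\bar Z_3u=-2\big(\bar Z_1(\bar Z_2u)-\bar Z_2(\bar Z_1u)\big)=0$ in $\mathcal D'(U)$; hence $\bar Z_ju=0$ for $j=1,2,3$. Next I would introduce the left-invariant second-order operator
\[
L:=\tfrac12\sum_{j=1}^{3}\big(X_j^2+Y_j^2\big).
\]
Since $Y_j=JX_j$ and the bracket on $\fg=\fh_1^\C$ is $J$-bilinear, $[X_j,Y_j]=J[X_j,X_j]=0$, and a short computation then gives $L=\sum_{j=1}^3\big(Z_j\bar Z_j+\bar Z_jZ_j\big)=2\sum_{j=1}^3 Z_j\bar Z_j$ (using also $[Z_j,\bar Z_j]=\tfrac{i}{2}[X_j,Y_j]=0$). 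Because $X_1,Y_1,\dots,X_3,Y_3$ span $\fg$, the principal symbol of $L$ at a nonzero covector $\xi$ equals $-\tfrac12\sum_j\big(\langle X_j,\xi\rangle^2+\langle Y_j,\xi\rangle^2\big)<0$, so $L$ is elliptic, with smooth (indeed real-analytic) coefficients. Consequently $Lu=2\sum_j Z_j(\bar Z_ju)=0$, and elliptic regularity (hypoellipticity of $L$) yields $u\in C^\infty(U)$.

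\textbf{Conclusion.} With $u$ now smooth and $\bar Z_1u=\bar Z_2u=\bar Z_3u=0$, the function $u$ is annihilated by all of $T^{0,1}G$, i.e. $\bar\partial u=0$, and a smooth $\bar\partial$-closed function on a complex manifold is holomorphic; thus $u$ is holomorphic on $U$. The second assertion follows by applying this to $\bar u$ (equivalently, by repeating the argument with the roles of $Z_j$ and $\bar Z_j$, hence of $\partial$ and $\bar\partial$, interchanged, using $Z_3=-2[Z_1,Z_2]$): $Z_1u=Z_2u=0$ forces $\partial u=0$, so $u$ is antiholomorphic.

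\textbf{Main obstacle.} I expect the delicate points to be bookkeeping rather than substance: keeping the double complexification straight so that ``$u$ holomorphic'' is genuinely equivalent to ``$\bar Z_1u=\bar Z_2u=\bar Z_3u=0$'', and citing correctly the standard facts that $L$ is elliptic and that distributional solutions of $Lu=0$ are smooth. The short bracket step that promotes the two hypotheses to annihilation by the whole of $T^{0,1}G$ is the conceptual core, since it is precisely what makes elliptic regularity (rather than mere Hörmander-type hypoellipticity) applicable.
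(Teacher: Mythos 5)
Your proof is correct, and it shares the paper's crucial first step: promoting $\bar Z_1u=\bar Z_2u=0$ to $\bar Z_3u=0$ via the bracket relation $\bar Z_3=-2[\bar Z_1,\bar Z_2]$. Where you diverge is in how to conclude holomorphy once all three $\bar Z_j$ annihilate $u$. The paper takes the shortest route: push $u$ forward by a biholomorphic chart $G\supset V\ra\C^3$ (such charts exist since $G\simeq\C^3$ biholomorphically) and cite the classical fact that a distribution on an open subset of $\C^3$ satisfying the Cauchy--Riemann system distributionally is a smooth holomorphic function. You instead keep everything intrinsic to $G$: you build the explicit left-invariant operator $L=\tfrac12\sum_{j=1}^3(X_j^2+Y_j^2)$, use the commutation relations $[X_j,Y_j]=0$ (hence $[Z_j,\bar Z_j]=0$) to rewrite it as $L=2\sum_jZ_j\bar Z_j$, observe that $L$ is elliptic because $X_1,Y_1,\dots,X_3,Y_3$ span $\fg$, and appeal to elliptic regularity to get $u\in C^\infty$, after which $\bar\partial u=0$ is a pointwise statement. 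Both routes are fine; the paper's is shorter and delegates the regularity argument to a citation, while yours unpacks the regularity explicitly and makes visible precisely why the bracket step is indispensable: without $\bar Z_3u=0$ one would only have the genuinely subelliptic $L'=\tfrac12\sum_{j=1}^2(X_j^2+Y_j^2)$, whereas including the $j=3$ terms turns $L$ into an honest elliptic operator. All your side computations ($[Z_j,\bar Z_j]=\tfrac{i}{2}[X_j,Y_j]=0$ by $J$-bilinearity, the negative-definite principal symbol of $L$, and the relations $\bar Z_3=-2[\bar Z_1,\bar Z_2]$, $Z_3=-2[Z_1,Z_2]$) check out against the paper's conventions.
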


\begin{proof}
 It follows from the definition of the distributional derivative that also $\bar Z_2 \bar Z_1 u = 0$ and $\bar Z_1 \bar Z_2 u = 0$.
In particular $\bar Z_3  u =- 2 [\bar Z_1, \bar Z_2] u = 0$. 
If $\varphi: V \subset U \to \C^3$ is a biholomorhic chart then it follows that $u \circ \varphi^{-1}$
is a distributionally holomorphic distribution defined on an open subset of $\C^3$. 
Thus $u \circ \varphi$ is smooth and  holomorphic and hence $u$ is holomorphic.
\end{proof}

\bigskip
Assume now that $D_P f(x)$ is $J$-linear a.e. We will show that $f$ is holomorphic. 
The main point is to show that for every biholomorphic chart  $\varphi$  the composition $\varphi \circ f$
has vanishing distributional $\bar Z_1$ and $\bar Z_2$ derivatives. Then we can invoke Lemma~\ref{lem_horizontal_hypoellipticity_1}.

For ease of notation we refer to maps which are linear with respect to the multiplication by  $J$ again as $\C$-linear. 
It suffices to show that  for every $x\in U$, there is an open set $V$ containing $f(x)$ and a  biholomorphic chart $\varphi: V \to V'\subset \C^3$ such that the composition 
$$u := \varphi \circ f:U'\ra \C^3$$
is holomorphic,  
   where $U':=f^{-1}(V)$.  After shrinking  $V$ if necessary,  we
may assume that $d \le C d_{CC}$  and that $\varphi$ is bilipschitz when we use the Riemannian metric $d$ on $G$ and the Euclidean metric on $\C^3$. 
Therefore the map $\varphi$ is also Lipschitz as a map from $(V, d_{CC})$ to $\C^3$ with the Euclidean metric. 
Hence $u \in W^{1,p}(U', \C^3)$. We know that $f$ is Pansu differentiable a.e.\ and that  the Pansu derivative is $\C$-linear. 
Moreover it is easy to see that $\varphi$ viewed as a map from a subset of the Carnot group $G$ to the abelian group $\C^3$ 
is Pansu differentiable and 
   $D_P \varphi = (D \varphi|_{V_1})\circ   \, \pi_1$,   
     where $\pi_1$ is the projection form $\fg$ to its first layer $V_1$. 
In particular $D_P \varphi$ is $\C$-linear. By the pointwise chain rule for the Pansu derivative we see that $u$ is Pansu 
differentiable a.e.\  and 
\begin{equation}  \label{eq:pointwise_chain_holo}
 D_P u(x) = D_P \varphi(f(x))    D_P f(x).
 \end{equation}
Thus $D_P u(x)$ is $\C$-linear a.e. and in particular $D_P u(x) \bar Z_i = 0$ for $i=1,    2$.  
By Lemma~\ref{le:pansu_vs_distributional}  below this implies that $\bar Z_i u = 0$ in the sense of distributions and it then follows from 
Lemma~\ref{lem_horizontal_hypoellipticity_1} that $u$ is holomorphic. This finishes the proof for the complexified first Heisenberg group.

\bigskip
\begin{lemma} \label{le:pansu_vs_distributional} Assume that $G$ is a  Carnot group with homogeneous dimension $ \nu$. 
Assume that  $u: U \subset G \to \C$ is   in  the Sobolev space $W^{1,p}(U,\C)$ with $p >  \nu$. Then for every horizontal 
left invariant vectorfield $X$ the distributional derivative $X u$ agrees with $D_P u X$, i.e.
$$ \int_G D_P u(x) X(x)  \, \,  \eta(x) \, d\mu(x) = - \int_G u   \, (X \eta) \, d\mu$$    for any $\eta \in C_c^\infty(U)$.   
\end{lemma}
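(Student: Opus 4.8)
The plan is to reduce the assertion to the pointwise identity $Xu = D_Pu(\cdot)X$ a.e.\ on $U$, and then to prove that identity by comparing two descriptions of the derivative of $u$ along the flow lines of $X$. First I would observe that $\C$ is regarded here as the abelian Carnot group $(\R^2,+)$, whose entire Lie algebra equals its first layer and whose dilations are scalar multiplications; hence the Pansu differential $D_Pu(x):\fg\ra\C$, being a graded homomorphism, annihilates every layer $V_j$ with $j\geq 2$ and is determined by a linear map $V_1\ra\C$, so $D_Pu(x)X$ is meaningful for horizontal $X$, and $x\mapsto D_Pu(x)X$ lies in $L^p_{\loc}(U)$ by Theorem~\ref{thm_pansu_differentiability} together with the definition of $W^{1,p}_{\loc}$. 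Next, since $G$ is unimodular the left-invariant field $X$ is divergence-free for Haar measure (differentiate the right-invariance $\int_G g(x\exp tX)\,d\mu(x)=\int_G g\,d\mu$ at $t=0$), so $\int_G (Xv)\eta\,d\mu=-\int_G v\,(X\eta)\,d\mu$ for $v\in C^\infty$; by Definition~\ref{def_sobolev_space} this is precisely what it means for the $L^p_{\loc}$ function ``$Xu$'' to be the distributional derivative of $u$. Thus the formula in the lemma is equivalent to $Xu=D_Pu(\cdot)X$ a.e.

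To prove the pointwise identity I would foliate $U$ by the integral curves of $X$, which by the remark preceding Lemma~\ref{le:compact_directional_constancy} are the right-translation orbits $t\mapsto x\exp(tX)$, and disintegrate Haar measure as arclength along these curves times a transverse measure. By the absolute continuity on lines of Sobolev functions on Carnot groups (see \cite{vodopyanov_foundations}, or deduce it from the $(1,1)$-Poincar\'e inequality in Lemma~\ref{lem_pi_sobolev} by a Fubini argument), for a.e.\ integral curve $\gamma(t):=x\exp(tX)$ the function $t\mapsto u(\gamma(t))$ is locally absolutely continuous with $\frac{d}{dt}u(\gamma(t))=(Xu)(\gamma(t))$ for a.e.\ $t$. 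On the other hand, Theorem~\ref{thm_pansu_differentiability} gives a full-measure set $E\subset U$ of Pansu differentiability points, and by Fubini a.e.\ point of a.e.\ curve $\gamma$ lies in $E$. If $\gamma(t_0)\in E$, then $u_{\gamma(t_0),r}\ra D_Pu(\gamma(t_0))$ locally uniformly; evaluating this convergence at $\exp(X)\in V_1$ and using $\de_r(\exp X)=\exp(rX)$ yields
\[
\frac{u(\gamma(t_0)\exp(rX))-u(\gamma(t_0))}{r}\ra D_Pu(\gamma(t_0))X\quad\text{as}\quad r\ra 0,
\]
i.e.\ $\frac{d}{ds}\big|_{s=0}u(\gamma(t_0+s))=D_Pu(\gamma(t_0))X$. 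Comparing the two computations of $\frac{d}{dt}u(\gamma(t))$ shows $(Xu)(\gamma(t))=D_Pu(\gamma(t))X$ for a.e.\ $t$ along a.e.\ curve, and a final application of Fubini upgrades this to $Xu=D_Pu(\cdot)X$ a.e.\ on $U$, as needed.

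The main obstacle I anticipate is the measure-theoretic bookkeeping in the last step: the Pansu differentiability hypothesis only directly produces a derivative ``at the base point'', so one must use left-invariance of the flow together with Fubini to see that, along a.e.\ flow line, the classical derivative exists and equals $D_Pu(\cdot)X$ at a.e.\ interior time, and then intersect this null-set information with the absolute continuity statement. A secondary technical point is citing (or reproving from Lemma~\ref{lem_pi_sobolev}) the absolute continuity of $u$ along the foliation by orbits of a single horizontal left-invariant vector field; the remaining manipulations are routine.
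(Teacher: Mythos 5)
Your proof is correct but follows a genuinely different route from the paper. You reduce the statement to the pointwise a.e.\ identity $Xu = D_Pu(\cdot)X$ (using unimodularity to identify the distributional directional derivative with the pairing in the lemma), and then prove that identity by disintegrating Haar measure along the right-translation orbits of $X$, invoking absolute continuity on lines (ACL) for Carnot--Sobolev functions, matching the classical derivative along a.e.\ orbit with the Pansu directional derivative at a.e.\ orbit point, and cleaning up with Fubini. The paper instead introduces the auxiliary scalar function $\omega(t) = \int_G u(x\exp tX)\,\eta(x)\,d\mu(x)$ and computes $\omega'(0)$ twice: once by dominated convergence (the difference quotient $t^{-1}[u(x\exp tX)-u(x)]$ converges pointwise to $D_Pu(x)X$ by Pansu differentiability and is dominated by a fixed $L^p$ function via the maximal-function estimate in Lemma~\ref{lem_sobolev_embedding}), and once by the change of variables $y=x\exp tX$ together with bi-invariance of $\mu$, which converts the integral into $\int_G u(y)\,\eta(y\exp(-tX))\,d\mu(y)$ and lets one differentiate the smooth test function under the integral. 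Equating the two expressions gives the identity directly, with no need to isolate the a.e.\ pointwise equality. The paper's argument is therefore more self-contained: it never has to invoke ACL or set up a disintegration of Haar measure. The one genuinely shaky point in your write-up is the parenthetical suggestion that ACL can be deduced from the $(1,1)$-Poincar\'e inequality of Lemma~\ref{lem_pi_sobolev} by a Fubini argument; the Poincar\'e inequality controls oscillation over balls but does not by itself give absolute continuity along one-dimensional orbits, so you should instead cite the ACL property directly (e.g.\ \cite{vodopyanov_foundations}) or prove it by mollification along $X$, which would make your route substantially longer than the paper's. Conversely, your approach has the merit of isolating the pointwise identity $Xu = D_Pu(\cdot)X$ a.e., which is a natural and potentially reusable statement in its own right.
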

\begin{proof}
Let $\eta \in C_c^\infty(U)$ and define
$$ \omega(t) = \int_G  u(x \exp t X) \eta(x)   \, d\mu(x).$$
Since $u$ is a.e. Pansu differentiable by Theorem~\ref{thm_pansu_differentiability}, we have
$$
t^{-1} [u(x \exp tX) - u(x)] \to D_P u(x) X
$$ as $t \to 0$ for a.e. $x \in U$. 
Moreover by the Sobolev embedding theorem (see Lemma~\ref{lem_sobolev_embedding})  this difference quotient
is bounded by the local  maximal function of the horizontal derivative and hence by 
a fixed $L^p$ function (cf. (\ref{eqn_maximal_function}) and (\ref{eqn_maximal_function_controls_oscillation})). Thus the dominated convergence theorem implies that
$$ \omega'(0) = \int_G D_P u(x) X(x)  \, \,  \eta(x) \, d\mu(x).$$

Using the change of variables $y = r_{\exp tX} x = x \exp tX$ and the bi-invariance of $\mu$  we get
$$ \omega(t) = \int_G  u(y) \eta(y \exp(-tX)) \, d\mu(y).$$
Differentiation with respect to $t$ shows that
$$ \omega'(0) =  - \int_G u   \, (X \eta) \, d\mu.$$
Comparing the two expressions for $\omega'(0)$ we get the desired identity.
\end{proof}

\bigskip\bigskip
We finally indicate the necessary modifications for a general complexified Carnot group $G$ with Carnot algebra $\fg$. 
As before we denote the complex structure on $\fg$ by $J$. 
Now we again complexify $\fg$. 

It is easy to see that the complexified algebra $\fg^\C$ is the direct sum of 
the eigenspaces of (the complexification of) $J$ :
$$ \fg^\C = \fg^\C_i \oplus \fg^\C_{-i}$$
where
$$ \fg^\C_{\pm i} = \{ X \mp i JX :  X \in \fg\}\,;$$
moreover  
\begin{equation} \label{eq:J_eigenspaces_commute}
[\fg^\C_i, \fg^C_i] \subset \fg_i^\C, \quad [\fg^\C_{-i}, \fg^\C_{-i}] \subset \fg^\C_{-i}, \quad [\fg^\C_i, \fg^\C_{-i}] = \{0\}\,
\end{equation}
and conjugation exchanges the subalgebras $\fg^\C_{\pm i}$.  See \cite{KMX2} for the details.

For each of the subalgebras $\fg^\C_{\pm i}$ we consider the space $\Lambda^k \fg^\C_{\pm i}$ of  complex-valued $k$-forms. A complex-valued
 one-form $\alpha \in \Lambda^1 \fg^\C_i$ has natural extension to
a complex one-form on $\fg^\C$ by setting it zero on $\fg^\C_{-i}$ and $k$-forms are extended similarly. We denote the spaces of the extended
forms also by $\Lambda^k \fg^\C_{\pm i}$ and view them as a subspaces of $\Lambda^k \fg^\C$. 
Pullback by  (the complexification of) a $J$-linear graded isomorphism preserves the spaces $\Lambda^k \fg^\C_{\pm i}$
while pullback by $J$-antilinear isomorphisms permutes them. 
It follows from the fact that 
$ [\fg^\C_i, \fg^\C_{-i}] = \{0\}$ and formula  \eqref{eq:exterior_derivative_on_algebra}
 that the exterior derivative maps  $\Lambda^k \fg^\C_{i}$ to 
 $\Lambda^{k+1} \fg^\C_{i}$ and $\Lambda^k \fg^\C_{-i}$ to 
 $\Lambda^{k+1} \fg^\C_{-i}$.

Now let $\omega$ be a non-zero top degree from in $\Lambda^{N} \fg^\C_i$. Note that  $\omega$ is determined uniquely up to a complex factor.
Let $\bar \omega \in \Lambda^{N} \fg^\C_{-i}$ denote the pullback by complex conjugation. Then  the complexification $\Phi_\C$  of  a $J$-linear 
graded isomorphism $\Phi: \fg \to \fg$ satisfies $\Phi_\C^* \omega = a \omega$ with $a \in \C$. 
Similarly for the complexification of a $J$-antilinear graded isomorphism we get $\Phi_{\C}^* \omega = b \bar \omega$ with $b \in \C$. 
Thus by assumption we have
$$ f_{P, \C}^* \omega = a \omega + b \bar \omega$$
with measurable functions $a$ and $b$. 

We will show again that $a$ is holomorphic and $b$ is antiholomorphic.
Let $X$ be in the first layer of  $\fg^{\C}_i$. Consider $\omega$ as an $N$-form on the subalgebra  $\fg^\C_i$.  Then it follows from Cartan's formula and the  bi-invariance of $\om$ that the $N-1$ form $i_X \omega$ is closed. 
 As discussed above it follows from the identity $[\fg^\C_i, \fg^\C_{-i}] = \{0\}$ that  the extension of $i_X \omega$ to an $N-1$ form on $\fg^\C$ is also
closed. We apply the pullback theorem with $\alpha = \omega$
 and let $\beta = \ i_X \omega$. Thus we get for every $\varphi \in C_c^\infty(U)$
 \begin{align*} 0 &= \int_U f^*_{P, \C} \alpha \wedge d(\varphi \beta) = \int_U  b \bar \omega  \wedge  d\varphi \wedge i_X \omega\\
 &= \int_U b  (X \varphi) \, \bar \omega \wedge \omega
 \end{align*}
 where we have used $\alpha \wedge i_X \omega = i_X \alpha \wedge \omega = \alpha(X) \omega$ 
when $\omega$ is a top-degree form,  $\alpha \in \Lambda^1 V$, $X \in \Lambda_1 V$. 
Since $\bar \omega \wedge \omega$ is (a fixed multiple of) the volume form on $G$ we deduce that
$X b = 0$ in the sense of distributions for every horizontal left invariant vectorfield  in $\fg_i^\C$. By taking commutators
it follows that $X b = 0$ for every left invariant vectorfield in $\fg_i^\C$. As in  the proof of 
Lemma~\ref{lem_horizontal_hypoellipticity_1} we see that 
$b$ is distributionally antiholomorphic and hence antiholomorphic. Similarly $a$ is holomorphic.
Since $D_P f(x)$ is an isomorphism for a.e. $x$ we deduce as before that $a \ne 0$ a.e.\ or $b \ne 0$ a.e.
In the first case $D_P f(x)$ is $J$-linear for a.e.\ $x$, in the second case $D_P f(x)$ is $J$-antilinear for a.e.\ $x$.
The argument that this implies that $f$ is holomorphic or antiholomorphic is the same as for the complexified first Heisenberg group.
\end{proof}

\bigskip

We finally show that the conclusion of  
Theorem~\ref{thm_qc_complex_heisenberg_holo_antiholo} does not hold if the assumption that the Pansu derivative is a.e.\ invertible is dropped.
Identify the first complexified Heisenberg group  $\H^\C$ with $\C^3$ via the exponential map.
Then the standard graded basis of left invariant  holomorphic and antiholomorphic one-forms is given by
$$ \zeta_1 = dz^1, \quad \zeta_2 = dz^2, \quad \zeta_3 = d z^3 + \frac12 (z^1 dz^2 - z^2  dz^1) $$
and $\overline \zeta_1 = d\overline{z^1}$ etc. 
Thus the  map $\gamma : \C \to \H^\C$ given by $\gamma(z) =(z,0,0)$ is holomorphic and an isometry. 
The map $\pi_1: \H^\C \to \C$ given by $\pi_1(z_1, z_2, z_3) = z_1$ is  holomorphic and has Lipschitz constant 1.
Let $g: \C \to \C$ be a Lipschitz map such that for a.e.\  $z \in \C$ the matrix  $Dg(z)$ is conformal or anticonformal.
Then $f = \gamma \circ g \circ \pi_1$ is a Lipschitz map from $H^\C$ to itself. In particular $f$ is in the Sobolev space
$W^{1,p}_{\rm loc}(\H^\C, \H^\C)$ for all $p \le \infty$. Moreover for a.e. $x \in \H^\C$  the Pansu differential $D_P f(x)$ is holomorphic or antiholomorphic.
Now if we let $g$ be the folding map $g(x +i y) = |x| + iy$ then $f$ is holomorphic for $z_1 > 0$ and antiholomorphic for $z_1 < 0$.

\bigskip\bigskip

\bigskip

\subsection{Global quasiconformal homeomorphisms of complexified Carnot groups}
\label{se:global_qc_complex}
We now study global quasiconformal homeomorphisms of complexified Carnot groups.
For simplicity we first state the result for the first complex Heisenberg group $\H^\C$.

\begin{theorem}
\label{lem_global_qc_complex_heisenberg}

\mbox{}
\ben
\item Any $K$-quasiconformal homeomorphism $\phi:\H^\C\ra\H^\C$ is $\eta$-quasisymmetric, where $\eta=\eta(K)$.
\item Any $K$-quasiconformal homeomorphism $\phi:\H^\C\ra\H^\C$ is $L$-bilipschitz for $L=L(K)$, modulo precomposition by a suitable Carnot rescaling.  In other words, $\phi$ is an $L$-quasisimilarity, where $L=L(K)$.
\item If $u:\H^\C\ra \C$ is a Lipschitz holomorphic function, then $u=\bar u\circ \pi$, where $\pi:\H^\C\ra \C^2$ is the abelianization homomorphism, and $\bar u:\C^2\ra\C$ is an affine holomorphic function.
\item Any holomorphic qc homeomorphism $\H^\C\ra \H^\C$ is a composition of a left translation and a complex graded automorphism.
\item The group of qc homeomorphisms $\H^\C\ra\H^\C$ is generated by left translations, complex graded automorphisms, and complex conjugation.
\een
\end{theorem}

\bigskip
In the proof we will need a lemma characterizing affine maps.
\begin{definition}
Let $\Phi: \fg\ra \fg$ be a linear map.  A $C^1$ map $f:G\ra G$ is a {\bf $\Phi$-map} if $Df(p)(X)=\Phi(X)(f(p))$ for all $X\in \fg$, $p\in G$.  Here we are identifying tangent spaces with the Lie algebra via left translation.
\end{definition}

\begin{lemma}
\label{lem_affine}
Let  $f:G\ra G$ be a $\Phi$-map.  Then $f$ is affine, i.e. a composition of a left translations with a homomorphism.  In particular $\Phi$ is a Lie algebra homomorphism.
\end{lemma}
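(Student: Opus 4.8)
The plan is to normalize so that $f$ fixes the identity, reinterpret the $\Phi$-map condition as an $f$-relatedness of left-invariant vector fields, integrate this along one-parameter subgroups, and then exploit that a connected Lie group is generated by its one-parameter subgroups to upgrade the resulting local identity to the homomorphism property. (Throughout I assume, as in all our applications, that $G$ is connected; connectedness is genuinely needed, since for $G=\Z$, $\fg=0$, $\Phi=0$ every self-map of $\Z$ is a $\Phi$-map but not every self-map is affine.) For any $g\in G$ the left translation $\ell_g$ satisfies $D\ell_g(p)(X(p))=X(gp)$ for every left-invariant $X$, i.e. in the left trivialization of $TG$ its differential is the identity; hence the chain rule shows that $\ell_h\circ f\circ\ell_k$ is again a $\Phi$-map for all $h,k\in G$. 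Replacing $f$ by $\ell_{f(e)^{-1}}\circ f$ we may therefore assume $f(e)=e$.

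Next I would observe that the defining equation $Df(p)(X(p))=\Phi(X)(f(p))$ says exactly that the left-invariant vector field $X\in\fg$ on the source is $f$-related to the left-invariant vector field $\Phi(X)$ on the target. Consequently $f$ carries the integral curve $t\mapsto p\exp(tX)$ of $X$ to an integral curve of $\Phi(X)$ through $f(p)$; since $\Phi(X)$ is smooth, uniqueness of solutions of ODEs yields
\begin{equation}
f(p\exp(tX))=f(p)\exp(t\Phi(X))\qquad\text{for all }p\in G,\ X\in\fg,\ t\in\R. \tag{$\ast$}
\end{equation}
Note that only the hypothesized $C^1$-regularity of $f$ is used here, because $t\mapsto f(p\exp(tX))$ is then a $C^1$ curve solving the ODE for $\Phi(X)$. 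Taking $p=e$ and using $f(e)=e$ gives the special case $f(\exp X)=\exp(\Phi X)$.

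To see that $f$ is a homomorphism, fix $a\in G$ and set $S_a:=\{b\in G: f(ab)=f(a)f(b)\}$. Then $e\in S_a$, and $(\ast)$ shows $S_a$ is stable under right multiplication by each $\exp(tX)$: if $b\in S_a$, then $f\big(ab\exp(tX)\big)=f(ab)\exp(t\Phi X)=f(a)f(b)\exp(t\Phi X)=f(a)f\big(b\exp(tX)\big)$, again by $(\ast)$. Since $G$ is connected it is generated by $\exp(\fg)$, so every element of $G$ is a finite product $\exp(X_1)\cdots\exp(X_k)$; iterating the previous observation starting from $e$ shows $S_a=G$. Hence $f$ is a group homomorphism, and undoing the normalization we get $f=\ell_{f(e)}\circ(\text{homomorphism})$, i.e. $f$ is affine.

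Finally, from the $\Phi$-map equation at $p=e$ we read off $\Phi=Df(e)$, which is the differential of the Lie group homomorphism just produced and hence a Lie algebra homomorphism; alternatively one checks $\Phi[X,Y]=[\Phi X,\Phi Y]$ directly, since $[X,Y]\in\fg$ is $f$-related both to $\Phi[X,Y]$ (the $\Phi$-map condition applied to the field $[X,Y]$) and to $[\Phi X,\Phi Y]$ (naturality of the bracket under $f$-relatedness, which holds for arbitrary smooth maps), and two left-invariant fields agreeing at one point coincide. I expect no substantial obstacle; the only delicate point is that $f$ is a priori merely $C^1$, so every step must avoid second derivatives of $f$ — this is precisely why the argument is routed through the first-order identity $(\ast)$ and the homomorphism property rather than through differentiating the defining equation.
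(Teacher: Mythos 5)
Your proof is correct and rests on the same two ingredients as the paper's: normalize so that $f(e)=e$, use that a $\Phi$-map is determined along one-parameter subgroups by ODE uniqueness, and then invoke connectedness of $G$ to propagate the homomorphism identity. The paper packages this more compactly — it observes that $f\circ\ell_g$ and $\ell_{f(g)}\circ f$ are both $\Phi$-maps agreeing at $e$, hence equal, and reads off $f(g_1g_2)=f(g_1)f(g_2)$ — whereas you unwind the same uniqueness principle via the explicit formula $(\ast)$ and the set $S_a$; the underlying argument is identical.
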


\begin{proof}
Note that $\Phi$-maps are preserved by left translation:  if $f$ is a $\Phi$-map, then so is $\ell_{g_1}\circ f\circ \ell_{g_2}$ for all $g_1, g_2\in G$.  Also, if two $\Phi$-maps $f_1$, $f_2$ agree at some point, then they coincide.  To see this, let $W=\{g\in G\mid f_1(g)=f_2(g)\}$.   Then $W$ is closed, and if $g$ lies in  $W$, then so does the integral curve $t\mapsto g\exp(tX)$ of the left invariant vector field $X$, by uniqueness of integral curves of the smooth vector field $\Phi(X)$.   Hence $W$ is also open, and therefore $W=G$ by the connectedness of $G$.

Suppose $f$ is an $\Phi$-map with $f(e)=e$.  Then for every $g\in G$
$$
(f\circ\ell_g)(e)=f(g)=f(g)\cdot f(e)=(\ell_{f(g)}\circ f)(e)\,,
$$
so $f\circ\ell_g\equiv \ell_{f(g)}\circ f$.   Hence for all $g_1,g_2\in G$
$$
f(g_1g_2)=(f\circ \ell_{g_1})(g_2)=(\ell_{f(g_1)}\circ f)(g_2)=f(g_1)f(g_2)
$$
so $f$ is a homomorphism of Lie groups, and $\Phi$ is its derivative. 
\end{proof}

\bigskip\bigskip
\begin{proof}[Proof of Theorem~\ref{lem_global_qc_complex_heisenberg}]
(1).  This is a general fact about Loewner spaces (or spaces satisfying Poincare inequalities), see \cite{heinonen_koskela}.   

(2).  We may assume that $\phi$ is holomorphic.  Pick $p\in \H^\C$, $R\in (0,\infty)$.  By (1), we may assume after postcomposing $\phi$ with a suitable Carnot rescaling that $B(\phi(p),R/C)\subset \phi(B(p,R))\subset B(\phi(p),CR)$ for $C=C(K)$.  Now the derivatives of $\phi$ in $B(p,R/2)$ are uniformly bounded, since $\phi$ is holomorphic.  Similarly for $\phi^{-1}$.  This implies that modulo rescaling, the horizontal derivatives are uniformly bilipschitz, and $\phi$ itself is uniformly bilipschitz.

(3). $Z_ju:\H^\C\ra\C$ is holomorphic for $1\leq j\leq 3$.  Since $\|Z_j\|$ is bounded for $1\leq j\leq 2$, so is $Z_ju$.  But $\H^\C$ is biholomorphic to $\C^3$ by the exponential map, so $Z_ju$ is constant for $1\leq j\leq 2$.  Hence $Z_3u=[Z_1,Z_2]u=0$. But $\bar Z_3u=0$ since $u$ is holomorphic.  Therefore $X_3u=Y_3u=0$, and $u$ is constant along cosets of the center.  Therefore $u=\bar u\circ \pi$, where $\pi:\H^\C\ra \C^2$ is the abelianization.  Also, $\bar u$ is Lipschitz and holomorphic, so it is an affine holomorphic function.

(4).  Let $\phi:\H^\C\ra \H^\C$ be a holomorphic quasiconformal homeomorphism. By (3), it follows that $\phi$ preserves cosets of the center, and descends to  a holomorphic affine transformation $\bar \phi:\C^2\ra\C^2$.  Modulo composition with a left translation and a complex graded automorphism, we may therefore assume that $\phi(e)=e$ and that $\bar\phi=\id_{\C^2}$.  But now the horizontal derivative of $\phi$ preserves horizontal left invariant vector fields, so by Lemma~\ref{lem_affine} the map $\phi$ is a left translation, and since $\phi(e)=e$ we have $\phi=\id_{\H^\C}$.

(5). We already know that quasiconformal homeomorphisms are either holomorphic or antiholomorphic, so we are done by (4).

\end{proof}

Returning to the general case of complexified Carnot groups, we let $G$ and $H$ be as in Subsection~\ref{subsec_local_results}.  Note that by Corollary~\ref{cor_complex_heisenberg_bilipschitz}, if $f:G\ra G$ is a quasiconformal homeomorphism whose Pansu differential is $J$-linear or $J$-antilinear almost everywhere, then $f$ is biholomorphic or $J$-antibiholomorphic; therefore the collection of such homeomorphisms is just the group of quasiconformal $J$-(anti)biholomorphic mappings of $G$.

\begin{theorem}
The group of quasiconformal $J$-(anti)biholomorphic mappings $G\ra G$ is generated by left translations, complex graded automorphisms, and complex conjugation.
\end{theorem}
\begin{proof}
We proceed as in   Theorem~\ref{lem_global_qc_complex_heisenberg}, except that we replace $\H^\C$ with $G$, and require the Pansu differentials of quasiconformal homeomorphisms to be almost everywhere $J$-linear or $J$-antilinear.
 
The proofs of (1)-(3) and (5) follow almost verbatim.  

(4). Consider a horizontal left invariant vector field $X$.  The pushforward $\phi_*X$ is a horizontal vector field; we just want to see that it is a left invariant vector field.  To that end, consider a $\C$-linear function $v:G/[G,G]\simeq\C^n\ra\C$, and observe that $u:=   v\circ\pi\circ  \phi$    
   is a Lipschitz holomorphic function, so (3) applies.  This gives that $\phi_*X$ is left invariant.  Taking brackets, it follows that $\phi_*X$ is left invariant for any left invariant vector field $X$.  Hence $\phi$ is affine by Lemma~\ref{lem_affine}.  Modulo composition with a left translation, it is a holomorphic contact automorphism, and hence graded.
\end{proof}

\bigskip
\subsection{Flexibility of the complex Heisenberg groups}
\label{subsec_flexibility_complex_heisenberg_groups}
We exhibit an abundance of global contact diffeomorphisms $\H_n^\C\ra\H_n^\C$, and in particular, an abundance of local contact diffeomorphisms.  Note that the local assertion follows from the work of Ottazzi-Warhurst, since the complex Heisenberg groups are nonrigid in the sense of \cite{ottazzi_warhurst}.

We let $X_1,\ldots,X_{2n+1}$ be a graded basis for the Lie algebra $\fh_n$ where $X_1,\ldots,X_{2n}\in V_1$, $X_{2n+1}\in V_2$, and $[X_j,X_{j+n}]=-X_{2n+1}$.  We denote the dual left invariant coframe by $\th_1,\ldots,\th_{2n+1}\in\Om^1(\H_n)$, so  $d\th_{2n+1}=\sum_{j=1}^n\th_j\wedge\th_{j+n}$.  Then $d\th_{2n+1}$ descends to a left invariant $2$-form $\ol{d\th_{2n+1}}$ on the abelianization $\H_n/[\H_n,\H_n]$; we may identify the pair $(\H_n/[\H_n,\H_n],\ol{d\th_{2n+1}})$ with $(\R^{2n},\om_n)$, where  $\om_n=\sum_jdx_j\we dx_{j+n}\in \Om^2(\R^{2n})$ is the standard symplectic form on $\R^{2n}$.

Now consider the complexification of $\H_n$, i.e. the complex Heisenberg group $\H_n^\C$, whose  Lie algebra we identify with the complexification $\fh_n^\C$.  Then $\{X_j,iX_j\}_{1\leq j\leq 2n+1}$ is a graded basis for $\fh_n^\C$ over $\R$, and we let $\{\al_j,\be_j\}_{1\leq j\leq 2n+1}$ be the dual basis.  Now $\{\zeta_j:=\al_j+i\be_j\}_{1\leq j\leq 2n+1}$ is the corresponding basis of left invariant $(1,0)$-forms, and $d\zeta_{2n+1}=\sum_{j=1}^n\zeta_j\we\zeta_{j+n}$.  The form $d\zeta_{2n+1}$ descends to a form $\widehat{d\zeta_{2n+1}}$ on the abelianization    
and we may identify $(\H_n^\C/[\H_n^\C,\H_n^\C],\widehat{\zeta_{2n+1}})$ with $(\C^{2n},\om_n^\C)$ where  $\om_n^\C=\sum_{j=1}^ndz_j\we dz_{j+n}\in \Om^2(\C^{2n})\otimes\C$ is the standard holomorphic symplectic form on $\C^{2n}$.  

\begin{lemma}
Pick $n\geq 1$.  Let $\pi:\H_n\ra \R^{2n}$, $\pi:\H_n^\C\ra \C^{2n}$ denote the abelianization homomorphisms.
\ben
\item If $\phi:\R^{2n}\ra\R^{2n}$ is a symplectic diffeomorphism, i.e. $f^*\om_n=\om_n$,  then there is a contact diffeomorphism $\hat\phi:\H_n\ra \H_n$ lifting $\phi$, i.e. $\pi\circ\hat\phi=\phi\circ\pi$.  Morever $\hat\phi$ is unique up to post-composition with central translation.
\item If $\phi:\C^{2n}\ra\C^{2n}$ is a diffeomorphism which preserves $\om_n^\C$, then there is a contact diffeomorphism $\hat\phi:\H_n^\C\ra \H_n^\C$ lifting $\phi$, i.e. $\pi\circ\hat\phi=\phi\circ\pi$.  Morever $\hat\phi$ is unique up to post-composition with central translation.
\een
\end{lemma}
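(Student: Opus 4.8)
The plan is to construct the lift by hand as a strict transformation of the vertical coframe form, reducing existence to the Poincar\'e lemma on the (contractible) abelianization, and to deduce uniqueness from the fact that a contact diffeomorphism inducing the identity on the abelianization must be a central translation. For part (1), write $\H_n=\R^{2n}\times\R$ so that $\pi$ is projection onto the first factor, let $t$ be the coordinate on the second factor, and write the vertical form as $\th:=\th_{2n+1}=dt+\al$, where $\al$ is a left-invariant primitive $1$-form on $\R^{2n}$ with $d\al=\om_n$; here $V_1=\ker\th$. Any map with $\pi\circ\hat\phi=\phi\circ\pi$ has the form $\hat\phi(x,t)=(\phi(x),t+g(x))$ for a smooth $g\colon\R^{2n}\ra\R$, and any such $\hat\phi$ is a diffeomorphism. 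Since $\al$ depends on $x$ only we compute $\hat\phi^*\th=dt+dg+\phi^*\al$, so the condition $\hat\phi^*\th=\th$ --- which forces $\hat\phi$ to preserve $V_1=\ker\th$, hence to be a contact diffeomorphism --- holds exactly when $dg=\al-\phi^*\al$. The right-hand side is closed, since $d(\al-\phi^*\al)=\om_n-\phi^*\om_n=0$ by the hypothesis on $\phi$, so by the Poincar\'e lemma on the contractible space $\R^{2n}$ a primitive $g$ exists; it is unique up to an additive constant, and changing this constant amounts to post-composing $\hat\phi$ with a translation by the center. This gives existence and accounts for the asserted ambiguity.

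To see that every contact lift arises this way, suppose $\hat\phi_1,\hat\phi_2$ are contact lifts of $\phi$ and set $\psi:=\hat\phi_2\circ\hat\phi_1^{-1}$, a contact diffeomorphism of $\H_n$ with $\pi\circ\psi=\pi$. At each point the Pansu differential $D_P\psi$ is a graded automorphism of $\H_n$ which restricts to the identity on the abelianization $V_1$; since $V_2=[V_1,V_1]$ it is then the identity on $V_2$ as well, so $D_P\psi\equiv\id$. A left translation also has Pansu differential identically $\id$, so after composing $\psi$ with the left translation carrying $\psi(p_0)$ back to $p_0$ we may assume $D_P\psi\equiv\id$ and $\psi(p_0)=p_0$; then along any horizontal curve $\gamma$ issuing from $p_0$ the curve $\psi\circ\gamma$ has the same left-translated horizontal velocity as $\gamma$, hence coincides with $\gamma$ by uniqueness of solutions of the horizontal-lift ODE, and by Chow connectivity and continuity $\psi=\id$ everywhere. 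Thus the original $\psi$ is a left translation, and since it induces the identity on $\H_n/[\H_n,\H_n]$ it is a translation by a central element, i.e.\ $\hat\phi_2$ is $\hat\phi_1$ post-composed with a central translation.

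Part (2) is handled identically after replacing $\R^{2n}$ by the ($4n$-real-dimensional, still contractible) space $\C^{2n}$, letting $t$ range over the center $V_2^\C\cong\C$, and replacing $\th$ by the $\C$-valued form $\th^\C:=\th_{2n+1}^\C=dt+\al^\C$ with $d\al^\C=\om_n^\C$: one solves $dg=\al^\C-\phi^*\al^\C$ for $g\colon\C^{2n}\ra\C$ via the Poincar\'e lemma for $\C$-valued closed $1$-forms (the right side is closed because $\phi^*\om_n^\C=\om_n^\C$), and $\hat\phi^*\th^\C=\th^\C$ again forces preservation of $V_1^\C=\ker\th^\C$, hence the contact property. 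The uniqueness argument transfers verbatim, using $V_2^\C=[V_1^\C,V_1^\C]$. I expect the only point needing care to be the bookkeeping with the $\C$-valued forms and the real codimension-two horizontal distribution $V_1^\C$; the substantive content is just the Poincar\'e lemma together with the pullback computation, so I do not anticipate a serious obstacle.
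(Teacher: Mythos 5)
Your existence argument is essentially the paper's argument with the abstraction removed: the paper invokes a general lemma about lifting connection-preserving maps of principal $\R^k$-bundles (Lemma~\ref{lem_principal_bundle_abelian}), whose proof in turn reduces to solving $d\bar\th' = 0$ on a base with $H^1=0$. You have specialized this to the case at hand and solved $dg = \al - \phi^*\al$ directly by the Poincar\'e lemma on $\R^{2n}$; the computation $\hat\phi^*\th = dt + dg + \phi^*\al$ and the contact conclusion are the same.

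Your uniqueness argument is a genuinely different route. The paper's principal bundle lemma only asserts uniqueness among \emph{connection-preserving} lifts (those with $\hat\phi^*\th = \th$), leaving implicit the observation that any contact lift $\hat\phi$ of a symplectomorphism is automatically connection-preserving (one writes $\hat\phi^*\th = \mu\th$, takes $d$, and restricts to $\ker\th$ to force $\mu\equiv 1$). You instead pass directly to the Pansu differential of $\psi = \hat\phi_2\circ\hat\phi_1^{-1}$: since $\pi\circ\psi = \pi$, you get $D_P\psi\restr_{V_1}=\id$, hence $D_P\psi\equiv\id$ by the grading, and then Chow connectivity together with uniqueness for the horizontal-lift ODE pins $\psi$ down to a central translation. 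This is correct and has the virtue of handling general contact lifts directly, at the cost of invoking a bit more Carnot machinery than the algebraic $\mu\equiv 1$ observation requires.

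One small inaccuracy worth fixing: a map commuting with $\pi$ and $\phi$ has the general form $\hat\phi(x,t) = (\phi(x), F(x,t))$, not $(\phi(x), t+g(x))$; the normal form $F(x,t)=t+g(x)$ is what the strict contact equation $\hat\phi^*\th=\th$ forces, not what the lifting condition alone gives. This does not affect your existence argument (you only need one lift to work), and your uniqueness argument does not rely on it, but the sentence as written overclaims.
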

\begin{proof}
The homomorphisms $\pi:\H_n\ra \R^{2n}$, $\pi:\H_n^\C\ra \C^{2n}$ define principal bundles with structure groups $\R$ and $\C$ respectively.  Also,  $\th_{2n+1}$, $\th_{2n+1}^\C$ are connection $1$-forms and $\om_n$, $\om_n^\C$ are the curvature forms on the two bundles, respectively.  Hence (1) and (2) follow from Lemma~\ref{lem_principal_bundle_abelian}.
\end{proof} 
We recall a standard fact about connections on principal bundles with abelian structure group:
\bigskip
\begin{lemma}
\label{lem_principal_bundle_abelian}
Pick $k\geq 1$.  For $j\in \{1,2\}$ let $\pi_j:P_j\ra B_j$ be a principal $\R^k$-bundle with connection form $\th_j\in \Om^1(P_j;\R^k)$ and curvature form $\om_j\in \Om^2(B_j;\R^k)$.   Assume that $H^1(B_1,\R)=\{0\}$.

If $\phi:B_1\ra B_2$ is a smooth map such that $\phi^*\om_2=\om_1$, then there is a lift $\hat\phi:P_1\ra P_2$ to a connection preserving principal bundle mapping; moreover if $B_1$ is connected then $\hat\phi$ is unique up to (composition with) the action of the structure group $P_1\curvearrowleft\R^k$. 
\end{lemma}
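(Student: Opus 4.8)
The plan is to reformulate the existence of a connection--preserving lift as the existence of a connection--preserving isomorphism of principal bundles over $B_1$, and then to reduce everything to a single cohomological obstruction in $H^1(B_1;\R^k)$.

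First I would form the pullback bundle $\phi^*P_2\to B_1$ equipped with the pulled--back connection form; since curvature is natural under pullback, its curvature $2$--form is $\phi^*\om_2=\om_1$. Giving a lift $\hat\phi:P_1\ra P_2$ covering $\phi$ which is a morphism of principal $\R^k$--bundles is the same as giving an isomorphism $\Psi:P_1\ra\phi^*P_2$ of principal $\R^k$--bundles over $\id_{B_1}$, and $\hat\phi$ is connection--preserving exactly when $\Psi$ carries $\theta_1$ to the pulled--back connection. Because $\R^k$ is contractible, both $P_1$ and $\phi^*P_2$ are trivial principal bundles, so I fix trivializations; connections then correspond to $\R^k$--valued $1$--forms $A_1,A_2\in\Om^1(B_1;\R^k)$ on the base, and---this is the only place the abelian hypothesis is used---the curvature of each such connection is simply its exterior derivative, so $dA_1=\om_1=dA_2$.

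Hence $A_1-A_2$ is a closed $\R^k$--valued $1$--form on $B_1$. Since $H^1(B_1;\R^k)\cong H^1(B_1;\R)\otimes\R^k=\{0\}$ by hypothesis, I may write $A_1-A_2=df$ for some smooth $f:B_1\ra\R^k$. Fiber--translation by $f$ (a gauge transformation of the trivialized bundle) changes the connection $1$--form $A_2$ into $A_2+df=A_1$; composing the chosen trivializations with this gauge transformation produces the desired connection--preserving isomorphism $\Psi$, hence $\hat\phi$. For uniqueness: two connection--preserving lifts of $\phi$ differ by an automorphism of $P_1$ over $\id_{B_1}$, i.e. (the structure group being abelian) by a gauge transformation $g:B_1\ra\R^k$ acting fiberwise; preservation of $\theta_1$ forces $A_1+dg=A_1$, so $dg=0$ and $g$ is locally constant. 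When $B_1$ is connected, $g$ is a single element $c\in\R^k$, and fiberwise translation by the constant $c$ is precisely the principal action of $c$ on $P_1$.

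I do not expect a serious obstacle here: the content is entirely the two structural inputs, namely triviality of principal bundles with contractible structure group and the fact that for an abelian structure group curvature equals $d$ of the local connection form, so that two connections with the same curvature differ by a closed $1$--form on the base---which the hypothesis $H^1(B_1;\R)=\{0\}$ kills. The only care needed is bookkeeping with the conventions for connection forms on trivial and on pulled--back bundles and the sign in the gauge--transformation formula.
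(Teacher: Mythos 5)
Your argument is correct and is essentially the same as the paper's: both reduce to the pullback bundle over $B_1$, use contractibility of $\R^k$ to produce a bundle isomorphism, observe that (by the abelian structure group) the discrepancy between the two connections is a closed $\R^k$-valued $1$-form on $B_1$, and kill it via $H^1(B_1;\R)=\{0\}$, with uniqueness following because a gauge transformation fixing the connection has $dg=0$. The only cosmetic difference is that you fix global trivializations, whereas the paper works directly with the descent of $\psi^*\th_2-\th_1$ to the base.
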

\begin{proof}
By replacing $\pi_2:P_2\ra B_2$ with the pullback bundle $\phi^*\pi_2:  \phi^*P_2\ra B_1$ and $\th_2$ with the pullback connection $\phi^*\th_2$, we may assume that $B_2=B_1$ and $\phi=\id_{B_1}$. There is a principal bundle equivalence $\psi:P_1\ra P_2$ over $\id_{B_1}$  because $\R^k$ is contractible.  Since $\psi^*\th_2$ and $\th_1$ are both connection forms for $\pi_1:P_1\ra B_1$, the difference $\psi^*\th_2-\th_1$  descends to a $1$-form $\bar\th\in \Om^1(B_1;\R^k)$, where $d\bar\th=\om_2-\om_1=0$.  Given a smooth map $u:B_1\ra \R^k$, we may compose $\psi$ with the corresponding shear, i.e. $\psi'(p)=\psi(p)\cdot u(\pi_1(p))$.  Then $(\psi')^*\th_2-\th_1$ descends to $\bar\th'=\bar\th+du$.  Using the fact that $H^1(B_1,\R)=\{0\}$, the closed form $-\bar\th$ is exact, i.e. we may choose $u$ such that $\bar\th'=0$, and $u$ is unique up to a constant function since $B_1$ is connected.
\end{proof}

\bigskip\bigskip
We may readily produce symplectic shears as follows.

Let $u:\R^{2n}\ra\R$ be a smooth function which depends only on the first $n$-coordinates, i.e. $\D_ju=0$ for $n+1\leq j\leq 2n$.  Let $X_u$ be the Hamiltonian vector field of $u$, i.e. $\om_n(X_u,\cdot)=du(\cdot)$.  Then $X_u$ is tangent to the vertical $n$-planes and invariant under every translation $x\mapsto x+v$ for $v\in \{0\}\times\R^n$, and is therefore complete.  The time $1$ map $\Phi^1:\R^{2n}\ra\R^{2n}$ of the Hamiltonian flow is a symplectic diffeomorphism.

Similarly, if $u:\C^{2n}\ra\C^{2n}$ is a holomorphic function which depends only on the first $n$-coordinates, then the time $1$ map of the associated holomorphic Hamiltonian vector field is a symplectic biholomorphic mapping with respect to the holomorphic symplectic form $\om_n^\C=\sum_{j=1}^ndz_j\we dz_{j+n}$.

\bigskip\bigskip
\appendix

\section{Equiregular manifolds}
\label{sec_equiregular_manifolds}~

In this appendix we will discuss  versions of the Approximation Theorem and Pullback Theorems for Sobolev mappings between equiregular subriemannian manifolds.  We will use (a variation on) the setup from Gromov's paper, see \cite[Section 1.4]{gromov_carnot_caratheodory_space_seen_from_within}, \cite{vodopyanov_carnot_manifolds}. We refer the reader to these papers for more detail.

We will be using subriemannian manifolds with an adapted frame.  These are tuples $(M,V_1,g,\{X_1,\ldots,X_N\})$ where:
\bit
\item $M$ is a smooth $N$-manifold.
\item $V_1\subset TM$ is an equiregular subbundle, so we have a filtration of $TM$ by smooth subbundles
$$
V_1\subsetneq\ldots\subsetneq V_s=TM
$$
where $V_j$ is spanned by brackets of smooth sections of $V_1$ and $V_{j-1}$ for $j\geq 2$, and $s \geq 1$.
\item  $g$ is a smooth Riemannian metric.
\item $X_1,\ldots,X_N$ is a smooth $g$-orthonormal frame.
\item There is a ``degree'' function 
$$
\deg:\{1,\ldots,N\}\ra \{1,\ldots, s\}
$$ 
such that if $W_j$ is the subbundle spanned by $\{X_i\mid \deg i=j\}$, then  $V_j:=\oplus^\perp_{i\leq j}W_i=\Span\{X_i\mid \deg i\leq j\}$.  
\eit

\bigskip
In addition we will use the following objects which are associated with the above structure:
\bit 
\item The coframe $\th_1,\ldots,\th_N$ dual to $X_1,\ldots,X_N$.
\item For $r\in (0,\infty)$, the anisotropically rescaled frame $\{X_j^r\}$ where $X_j^r:=r^{\deg j}X_j$.
\item For every $x\in M$, the graded Lie algebra $\fg_x:=\oplus_j(V_j)_x/(V_{j-1})_x$ with bracket induced by the Lie bracket.  We identify $\fg_x$ with $T_xM$ using the isomorphism 
\begin{equation}
(W_j)_x\simeq (V_j)_x/(V_{j-1})_x
\end{equation}
induced by the inclusion $(W_j)_x\ra (V_j)_x$.
\item The associated Carnot-Caratheodory metric $d_{CC}$, with Hausdorff dimension $\nu=\sum_{j=1}^sj\dim W_j$.
\item The (non-Riemannian) connection $\nabla$ on $TM$ for which the $X_j$s are parallel vector fields.
\item The exponential map $\exp:TM\supset U\ra M$ associated with $\nabla$, which sends $v\in TM$ to the $\ga_v(1)$, where $\ga_v$ is the $\nabla$-geodesic with initial velocity $v$.
\eit

A key ingredient will be Gromov's blow-up theorem for subriemannian geometries, which gives a quantitative comparison between the subriemannian structure at a small scale near $x\in M$, and the nilpotent tangent cone $\fg_x$.   
\begin{theorem}[\protect{\cite[Section 1.4]{gromov_carnot_caratheodory_space_seen_from_within}}, \cite{vodopyanov_carnot_manifolds}]
\label{thm_gromov_blow_up}
If $U\subset M$ is an open subset with compact closure in $M$, then there exists $\rho_0>0$, and for $x\in U$ a family of smooth embeddings
$$
\{\de_\rho^x:\R^N\supset B(0,r_\rho)\ra M\}_{\rho<\rho_0}
$$
 and smooth vector fields $\{\hat X_j^{x,\rho}\}_{1\leq j\leq N}$ on $B(0,r_\rho)$, for $\rho\in [0,\rho_0)$, where:
\ben
\item $r_\rho\ra\infty$ as $\rho\ra 0$.
\item The family $\{\de_\rho^x\}$ depends continuously on $x$ and $\rho$ in the $C^\infty_{\loc}$-topology.
\item $\de_\rho^x(0)=x$
\item $(\de_\rho^x)^*X_j^\rho=\hat X_j^{x,\rho}$.
\item $\hat X_j^{x,\rho}\ra\hat X_j^{x,0}$  in $C^k_{\loc}$   as $\rho\ra 0$, uniformly on compact subsets in $x$, for every $k$.
\item The vector fields $\{\hat X_j^{x,0}\}$ span a graded Lie algebra of vector fields on $\R^N$, with grading given by degree
$$
\oplus_j\Span\{\hat X_i^{x,0}\mid \deg i = j\}\,;
$$
moreover  there is a graded isomorphism from $\fg_x$ to $\Span\{\hat X_j^{x,0}\}$ which sends $X_j(x)\mapsto \hat X_j^{x,0}$ for $\deg j=1$.
\item The vector fields $\{\hat X_j^{x,0}\}$ are given by the Baker-Campbell-Hausdorff formula (and in particular are polynomial vector fields); equivalently, the exponential map associated with  $\{\hat X_j^{x,0}\}$ is the identity map $\R^N\ra \R^N$.
\item If $\hat g^x_\rho$ is the Riemannian metric on $B(0,r_\rho)$ for which $\{\hat X^{x,\rho}_j\}$ is orthonormal, and $\hat d_{CC}^{x,\rho}$ is the corresponding Carnot-Caratheodory metric, then $\hat d_{CC}^{x,\rho}\ra \hat d_{CC}^{x,0}$  as $\rho\ra 0$ uniformly on compact subsets, uniformly in $x$.
\een
\end{theorem}

The weight of a subset $I\subset \{1,\ldots,N\}$ is defined to be $\wt I:=-\sum_{i\in I}\deg i$.  A differential form $\al=\sum_{I}a_I\th_I$ has weight $w$ (weight $\leq w$) if  $a_I=0$ unless $\wt(I)=w$ ($\wt I\leq w$); here $\th_I:=\La_{i\in I}\th_i$.

Next we define the center of mass for probability measures whose support has small diameter, imitating the construction for Carnot groups.

Let $K\subset M$ be a compact subset, and let $r_K>0$ be such that for every $x\in K$, the $\nabla$-exponential map $\exp_x$ is defined on $B_g(0,r_K)\subset T_xM$ and is a diffeomorphism onto its image, with inverse $\log_x$.  If $\nu$ is a compactly supported probability measure on $M$, and for some $x\in K$ we have $\spt(\nu)\subset \exp_x(B_g(0,r_K))$, then $(\log_x)_*\nu$ is a well-defined probability measure on $T_xM$, and we define $C_\nu(x)$ to be its center of mass, i.e.
$$
C_\nu(x):=\int_M\log_x\;d\nu=\int_{T_xM}y\;d((\log_x)_*\nu)(y)\,.
$$ 
We have:
\begin{lemma}
\label{lem_com_equiregular}
For every $\eps>0$ there  exists $r_K'=r_K'(\eps,K)\in (0,r_K)$ such that if $\spt\nu\subset K$, $d(\spt\nu,M\setminus K)>\eps$, and $\diam_g(\spt\nu)<r_K'$ then the set 
$$
\{x\in K\mid \spt\nu\subset B(x,10r_K')\}\,,
$$
contains a unique element $\com(\nu)$ such that $C_\nu(\com(\nu))=0$.
\end{lemma}
By working  locally in charts and rescaling, or by using Theorem~\ref{thm_gromov_blow_up}, Lemma~\ref{lem_com_equiregular} reduces to a perturbation of the Carnot group case; then the argument from Lemma~\ref{lem_carnot_center_of_mass}, which is robust under perturbation, may be applied.

Now let $(M',V_1',g',\{X_1',\ldots,X_{N'}'\})$ be a second subriemannian manifold; we will denote the associated data and objects with primes.  Let $f:M\supset U\ra U'\subset M'$ be a $W^{1,p}_{\loc}$-mapping  between open subsets, where $p$ is strictly larger than the homogeneous dimension of $(M,V_1)$.  Here $W^{1,p}_{\loc}$-mappings are defined as in the Carnot group case, except that instead of using horizontal left invariant vector fields to define the Sobolev space $W^{1,p}_{\loc}(M)$, one uses linear combinations of the vector fields $\{X_i\mid \deg i=1\}$.

Next we define a mollification procedure.   Choose a smooth, rotationally invariant probability measure $\si$ on $\R^n$, with $\spt\si\subset B(0,1)$, and for every $x\in M$, $\rho\in (0,\infty)$ we let $\si_{x,\rho}$ be the pushforward of $\si$ under the linear isomorphism $\R^n\ra T_xM$ which sends (the standard basis vector) $e_j$ to $X_j^\rho(x)=\rho^{\deg j}X_j(x)$.  If $K_0\subset U$ is a compact subset, then taking  $K_1$ compact with $K_0\subset \Int K_1\subset U$, and $K_1':=f(K_1)\subset U'$,    there is a $\rho_0>0$ such that if $\rho<\rho_0$ then applying Lemma~\ref{lem_com_equiregular} with $K:=K_1'$ and $\eps=\eps(f,\rho_0)$,   the measure $f_*\si_{x,\rho}$ will have a well-defined center of mass  for every $x\in K_0$; we define $f_\rho(x):=\com(f_*\si_{x,\rho})$.

We define the Pansu pullback of a differential form $\om\in \Om^*(U')$ by $f_P^*\om(x):=(D_Pf)(x)^*\om(f(x))$, where 
$$
D_Pf(x):T_xM\simeq\fg_x\ra \fg'_{f(x)}\simeq T_{f(x)}M'
$$ 
is the Pansu differential, which for a.e. $x\in U$ is a graded homomorphism  between  graded Lie algebras \cite{vodopyanov_carnot_manifolds}.

With these definitions, we have:

\begin{theorem}[Approximation Theorem for equiregular manifolds with an adapted frame]
\label{thm_equiregular_approximation_theorem}
Let $f:M\supset U\ra U'\subset M'$ be  a $W^{1,p}_{\loc}$-mapping for some $p>\nu$.
Suppose $\om\in \Om^k(U')$, $\eta\in\Om^{N-k}_c(U) $ are forms with continuous coefficients, such that $\wt(\om)+\wt(\eta)\leq -\nu$.  Then 
\begin{equation}  \label{eqn_main_approximation_alternative_equiregular}
  f_\rho^*\om\wedge \eta \stackrel{L^1_{\loc}}{\lra} f_P^*\om\wedge \eta
\end{equation}
where $f_\rho$ is the mollified map defined above.
Since $\eta$ has compact support, it follows that 

\begin{equation}  \label{eqn_main_approximation_equiregular}
\int_Uf_P^*\om\wedge \eta=\lim_{\rho\ra 0}\int_Uf_\rho^*\om\wedge \eta\,.
\end{equation}
\end{theorem}

We only give a brief indication of the proof  because it is very similar to the Carnot group case.  To prove Theorem~\ref{thm_equiregular_approximation_theorem}, we use the Dominated Convergence Theorem, following the proof of Theorem~\ref{th:main_approximation}.  

To verify pointwise convergence, one uses  the mappings
$$
\hat f^{x,\rho}:=(\de_\rho^{'f(x)})^{-1}\circ f\circ \de_\rho^x\,,
$$
where $\de_\rho^x$, $\de_\rho^{'f(x)}$ are obtained by applying Theorem~\ref{thm_gromov_blow_up} to $M$ and $M'$, respectively.
By \cite{vodopyanov_carnot_manifolds} if $x\in M$ is a point of differentiability of $f$, then  $\hat f^{x,\rho}\ra D_Pf(x)$ uniformly on compact sets, where $D_Pf(x):\R^N\ra\R^{N'}$; combining this with Theorem~\ref{thm_gromov_blow_up},  one obtains pointwise convergence as in the Carnot group case.  

Letting $U$ be an open set with compact closure containing $\spt\eta$, we use the same integrable majorant as in the Carnot group case (see \eqref{eqn_maximal_function}).  The proofs of integrability and majorization  follow along the same lines, using Theorem~\ref{thm_gromov_blow_up} to get the estimate  \eqref{eqn_maximal_function_controls_oscillation} for small $\rho$, by a slightly modified version of Lemma~\ref{lem_sobolev_embedding}.

Using this local approximation theorem we can easily extend the pullback theorem to equiregular manifolds.
In fact, for the validity of the pullback theorem we do not need global adapted frames.  We say that a tuple $(M,V_1,g)$ is
an  equiregular subriemannian manifold if
\bit
\item $M$ is a smooth $N$-manifold.
\item $V_1\subset TM$ is an equiregular subbundle, so we have a filtration of $TM$ by smooth subbundles
$$
\{0\}=V_0\subsetneq V_1\subsetneq\ldots\subsetneq V_s=TM
$$
where $V_j$ is spanned by brackets of smooth sections of $V_1$ and $V_{j-1}$ for $j\geq 2$, and $s \geq 1$.
\item  $g$ is a smooth Riemannian metric.
\eit
As before the homogeneous dimension of $M$ is defined by $\nu = \sum_{j=1}^sj\dim(V_j/V_{j-1})$.
For each $x \in M$ there exists an open neighbourhood $U_x$ such that $(U_x, V_1|_{U_x}, g|_{U_x})$ in addition admits an
adapted frame as above.

For a continuous differential form $\omega \in \Omega^k(M)$ we define a global  notion of weight as follows. 
For a one-form $\omega \in \Omega^1$ with $\omega|_{V_1} \equiv 0$ and $\omega \not \equiv 0$ we define 
\begin{equation} \wt(\omega) := -\max \{ j : \omega|_{V_j} \equiv 0  \} - 1.
\end{equation}
If $\omega|_{V_1} \not \equiv 0$ we set $\wt(\omega) = -1$.
We then define
\begin{equation}
\Omega^{k, \le w}(M) := \Span \{ \omega_1 \wedge \ldots \wedge \omega_k :  \om_j \in \Omega^1(M), \, \sum_{j=1}^k \wt(\omega_j) \le w\}
\end{equation}
and for $\om \not \equiv 0$ we set
\begin{equation}
\wt(\om) =  \wt(\omega, M) := \min \{ w: \om \in \Omega^{k,\le w}(M) \}.
\end{equation}
It is easy to see that if $U \subset M$ admits an adapted frame then 
$\wt(\omega, U)$ agrees with the definition of weight given above which was based on the adapted frame.
In particular the weight is independent of the choice of frame.

\begin{theorem}[Pullback theorem for equiregular manifolds]  \label{th:pull_back_equiregular} 
Let $M$ and $M'$ be equiregular manifolds, where $M$ has dimension $N$ and homogeneous dimension $\nu$, and suppose $f:M\supset U\ra U'\subset M'$ is a $W^{1,p}_{\loc}$  mapping between open sets for some $p>\nu$.
  Suppose  $\om\in \Om^k(U')$ is a continuous  form with continuous distributional exterior derivative $d\om$, suppose $\eta\in \Om^{N-k-1}_c(U)$ is smooth, and 
  \begin{equation}  \label{eq:weight_pullback_equiregular}
\wt(\om)+\wt(d\eta)\leq -\nu\,,\quad \wt(d\om)+\wt(\eta)\leq -\nu\,.
\end{equation}
Then  
\begin{equation}  \label{eq:pullback_theorem_equiregular}
\int_U(f_P^*d\om)\we\eta+(-1)^k\int_Uf_P^*\om\we d\eta=0\,.
\end{equation}
\end{theorem}
Here we use the convention that an  inequality in  \eqref{eq:weight_pullback_equiregular} holds if at least one of the forms involved vanishes.

\begin{proof} Since the proof is analogous to the proof for Carnot groups we just provide a sketch of the argument. 
The main point is to localize. By a partition of unity it suffices to show that for each $x \in U$ there exists an open neighborhood $U_x$ such that 
\eqref{eq:pullback_theorem} holds for all smooth $\eta\in \Om^{N-k-1}_c(U_x)$ which satisfy  \eqref{eq:weight_pullback_equiregular}.
Since $p > \nu$ the map $f$ is continuous. Thus we may choose $U_x$ so small that  there exist adapted frames in $U_x$ and  $U'_{f(x)}  \supset f(U_x)$.
Let $\tilde U \subset U_x$ be open and compactly contained in $U_x$ such that $\eta$ vanishes outside $\tilde U$. 
For sufficiently small $\rho > 0$ we have $f_\rho(\tilde U) \subset U'_{f(x)}$ and thus $d f_\rho^* \omega = f_\rho^* d\omega$ in the sense of distributions. The identity  
\eqref{eq:pullback_theorem_equiregular} now follows from 
Theorem~\ref{thm_equiregular_approximation_theorem} by taking $\rho \to 0$.
\end{proof}

\bibliography{product_quotient}
\bibliographystyle{amsalpha}

\end{document}